\def\R{\mathbb{R}}
\def\Z{\mathbb{Z}}
\def\C{\mathbb{C}}
\def\D{\mathbb{D}}
\newcommand{\ben}{\begin{enumerate}}
\newcommand{\bit}{\begin{itemize}}
\newcommand{\een}{\end{enumerate}}
\newcommand{\eit}{\end{itemize}}
\newcommand{\ed}{\end{document}}
\newcommand{\cal}{\mathcal}
\def\cU{\mathcal{U}}
\def\cS{\mathcal{S}}
\def\cR{\mathcal{R}}
\def\cB{\mathcal{B}}
\def\cW{\mathcal{W}}
\def\cS{\mathcal{S}}
\def\cV{\mathcal{V}}
\def\cH{\mathcal{H}}
\def\cL{\mathcal{L}}
\def\cG{\mathcal{G}}
\def\cN{\mathcal{N}}
\let\hat=\widehat
\let\tilde=\widetilde
\let\landa=\lambda
\let\alfa=\alpha
\let\parc=\partial
\def\ep{\varepsilon}
\def\landa{\lambda}
\def\flecha{\rightarrow}
\def\esiz{\langle}
\def\esde{\rangle}
\newcommand{\fl}{\longrightarrow}
\def\r{\mathbb{R}}
\def\S{\Sigma}
\def\cte.{\mathop{\rm cte.}\nolimits}
\def\R{\mathbb{R}}
\def\Z{\mathbb{Z}}
\def\C{\mathbb{C}}
\def\D{\mathbb{D}}
\def\H{\mathbb{H}}
\def\S{\mathbb{S}}
\newfont{\bb}{msbm10 at 12pt}
\def\r{\mathbb{R}}
\titleformat{\subsection}[runin]
{\bfseries} {\thesubsection{.}}{0.15cm}{}[.]
\titleformat{\subsubsection}[runin]
{\em}{\thesubsubsection{.}}{0.15cm}{}[.]
\newtheorem{theorem}{Theorem}[section]
\newtheorem{lemma}[theorem]{Lemma}
\newtheorem{remark}[theorem]{Remark}
\newtheorem{corollary}[theorem]{Corollary}
\newtheorem{definition}[theorem]{Definition}
\newtheorem{assertion}[theorem]{Assertion}
\theoremstyle{definition}
\numberwithin{equation}{section}
\numberwithin{figure}{section}
\begin{document}
\fancyhead[LO]{Elliptic Weingarten surfaces}
\fancyhead[RE]{Isabel Fernández, José A. Gálvez, Pablo Mira}
\fancyhead[RO,LE]{\thepage}

\thispagestyle{empty}


\begin{center}
{\bf \LARGE Quasiconformal Gauss maps and the \\[0.2cm] Bernstein problem for Weingarten multigraphs}
\vspace*{5mm}

\hspace{0.2cm} {\Large Isabel Fernández, José A. Gálvez, Pablo Mira}
\end{center}

\footnote[0]{
\noindent \emph{Mathematics Subject Classification}: 53A10, 53C42, 35J15, 35J60. \\ \mbox{} \hspace{0.25cm} \emph{Keywords}: Weingarten surfaces, fully nonlinear elliptic equations, Bernstein problem, multigraphs, curvature estimates, quasiconformal Gauss map.}



\vspace*{7mm}

\begin{quote}
{\small
\noindent {\bf Abstract}\hspace*{0.1cm}
We prove that any complete, uniformly elliptic Weingarten surface in Euclidean $3$-space whose Gauss map image omits an open hemisphere is a cylinder or a plane. This generalizes a classical theorem by Hoffman, Osserman and Schoen for constant mean curvature surfaces. In particular, this proves that planes are the only complete, uniformly elliptic Weingarten multigraphs. We also show that this result holds for a large class of non-uniformly elliptic Weingarten equations. In particular, this solves in the affirmative the Bernstein problem for entire graphs for that class of elliptic equations. To obtain these results, we prove that planes are the only complete multigraphs with quasiconformal Gauss map and bounded second fundamental form.
\vspace*{0.1cm}

}
\end{quote}


\section{Introduction}

A \emph{Weingarten surface} is an immersed surface $\Sigma$ in $\R^3$ whose mean curvature $H$ and Gauss curvature $K$ are related by some smooth equation
\begin{equation}\label{wein1}
W(H,K)=0.
\end{equation}
In this paper, we will require that $W\in C^2(\R^2)$. We say that $\Sigma$ is an \emph{elliptic} Weingarten surface if \eqref{wein1} is elliptic when viewed as a fully nonlinear second order PDE in local graphical coordinates on $\Sigma$. In the elliptic case, \eqref{wein1} can be rewritten as
 \begin{equation}\label{weq2}
 H=g(H^2-K), \hspace{1cm}  4t (g'(t))^2<1 \hspace{0.5cm} \text{ for all } t\geq 0,
 \end{equation}
for some $C^2$ function $g:[0,\8)\flecha \R$; the inequality in \eqref{weq2} is precisely the ellipticity condition for the equation. Note that, when $g$ is constant, \eqref{weq2} is the constant mean curvature (CMC) equation. Elliptic Weingarten surfaces are often called \emph{special Weingarten surfaces}.  Their global geometry has been studied in depth by many authors; see e.g. \cite{AEG,A,B,Ch,CF,EM,GMM,GM3,HW1,Ho0,Ho,RS,ST,ST2,T}.

The most fundamental open problem in the global theory of elliptic Weingarten surfaces is probably the \emph{Bernstein problem}, see e.g. Rosenberg and Sa Earp \cite{RS}:

\vspace{0.2cm}

\noindent {\bf Bernstein problem:} \emph{Are planes the only entire elliptic Weingarten graphs in $\R^3$?}
 
 \vspace{0.2cm}
 
If $g(0)\neq 0$, there are no entire graphs satisfying \eqref{weq2}, as follows from an easy application of the maximum principle and the fact that spheres of radius $1/|g(0)|$ satisfy \eqref{weq2}. That is, the Bernstein problem is only meaningful for Weingarten surfaces \emph{of minimal type}, i.e., when $g(0)=0$.
 
As noted in \cite{ST2}, the Bernstein problem has an affirmative answer when \eqref{weq2} is \emph{uniformly elliptic}, that is, when there exists some constant $\Lambda\in (0,1)$ such that
 \begin{equation}\label{unife2}
 4t (g'(t))^2\leq \Lambda<1 \hspace{0.5cm} \text{ for all } t\geq 0.
 \end{equation}
 More specifically, the uniform ellipticity condition \eqref{unife2} together with $g(0)=0$ implies that the principal curvatures $\kappa_1, \kappa_2$ of $\Sigma$ satisfy the inequality $$\kappa_1^2 + \kappa_2^2 \leq 2\gamma \, \kappa_1 \kappa_2,$$ for some $\gamma \in \R$; see e.g. Lemma \ref{lem:cuw1}. This inequality is equivalent to the property that $\Sigma$ has \emph{quasiconformal Gauss map}, see Section \ref{sec:cuasi} for the details. A deep theorem by L. Simon (\cite[Theorem 4.1]{Sim}) shows that planes are the only entire graphs with quasiconformal Gauss map. So, in particular, planes are the only entire, uniformly elliptic Weingarten graphs. Note that this result includes the classical Bernstein theorem for minimal surfaces ($H=0$). 
 
 Not much is known about classes of Weingarten surfaces for which the Bernstein problem can be solved, if \eqref{unife2} does not hold (see \cite{RaS}). One of our contributions in this paper is to solve in the affirmative the Bernstein problem for a wide class of non-uniformly elliptic Weingarten equations; see the Corollary at the end of the introduction.

The Bernstein problem is related to the spherical image of the Gauss map $N:\Sigma\flecha \S^2$ of elliptic Weingarten surfaces $\Sigma$ in $\R^3$. Indeed, note that if $\Sigma$ is a graph, $N(\Sigma)$ lies in an open hemisphere. Conversely, if $N(\Sigma)$ lies in some open hemisphere, then $\Sigma$ might not be a graph, but it is a \emph{multigraph}, i.e., a local graph with respect to a specific fixed direction of $\R^3$.

A classical theorem by Hoffman, Osserman and Schoen \cite{HOS} proves that if the Gauss map image $N(\Sigma)$ of a complete CMC surface $\Sigma$ lies in a closed hemisphere, then $\Sigma$ is a plane ($H=0$) or a cylinder ($H\neq 0$). So, this theorem can be seen as a solution to a \emph{generalized Bernstein problem} for CMC multigraphs, and motivates the following problem, see Question 2 in p. 699 of \cite{ST}.

\vspace{0.2cm}

\noindent {\bf Bernstein problem for multigraphs:} \emph{Are planes and cylinders the only complete, elliptic Weingarten surfaces in $\R^3$ whose Gauss map image lies in a closed hemisphere of $\S^2$?}

\vspace{0.2cm}

Observe that this problem asks, in particular, if complete (not necessarily entire) elliptic Weingarten graphs in $\R^3$ are planes. This time, in contrast with the case of entire graphs, the problem is non-trivial if $g(0)\neq 0$ in \eqref{weq2}. Also, one should note that there exist complete, rotational CMC \emph{unduloids} in $\R^3$ whose Gauss map image lies in an arbitrarily small tubular neighborhood of a geodesic of $\S^2$. These examples show the necessity of the hypothesis on the Gauss map image in this problem.

We now state the main results of this paper. In Section \ref{sec:2} we will discuss some preliminary material on Weingarten surfaces and quasiconformal Gauss maps, and among other results we will show (see Lemma \ref{numulti}):

\vspace{0.2cm}

\noindent {\bf Lemma A:} \emph{If the Gauss map image $N(\Sigma)$ of an elliptic Weingarten surface $\Sigma$ lies in a closed hemisphere, then either $\Sigma$ is a multigraph (i.e., $N(\Sigma)$ lies in the interior of this hemisphere), or $\Sigma$ is a piece of a plane or a cylinder}.

\vspace{0.2cm}

Thus, in order to classify elliptic Weingarten surfaces whose Gauss map image lies in a closed hemisphere, it suffices to classify elliptic Weingarten multigraphs.

In Section \ref{sec:3n} we study multigraphs with quasiconformal Gauss map, not necessarily Weingarten surfaces, proving (Theorem \ref{alfa}):

\vspace{0.2cm}

\noindent {\bf Theorem A:} \emph{Planes are the only complete multigraphs with quasiconformal Gauss map and bounded second fundamental form.}

\vspace{0.2cm}

It is a long-standing open problem to determine whether planes are the only complete surfaces in $\R^3$ with quasiconformal Gauss map, and whose Gauss map image $N(\Sigma)$ omits an open set of $\S^2$; see Section 5 in \cite{Sim}. Theorem A can be seen as a step in this direction.

In Section \ref{sec:3} we will use Theorem A to prove that the Bernstein problem for elliptic Weingarten multigraphs (and in particular for entire graphs) can be solved whenever we have a bound on the norm of the second fundamental form. From Lemma A and Theorem \ref{b2} we have:

\vspace{0.2cm}

\noindent {\bf Theorem B:} \emph{Planes and cylinders are the only complete elliptic Weingarten surfaces in $\R^3$ with bounded second fundamental form and Gauss map image contained in a closed hemisphere}.

\vspace{0.2cm}

The proofs of Theorems A and B are inspired by an argument of Hauswirth, Rosenberg and Spruck \cite{HRS} in the context of CMC surfaces in the product space $\H^2\times \R$, where $\H^2$ denotes the hyperbolic plane, and subsequent modifications of it in other geometric theories by Espinar and Rosenberg \cite{ER}, and Gálvez, Mira and Tassi \cite{GMT}; see also Manzano-Rodríguez \cite{Manzano} and Daniel-Hauswirth \cite{DaHa}.

Theorem B reduces the Bernstein problem for elliptic Weingarten graphs or multigraphs to the obtention of a priori estimates for the norm of the second fundamental form (usually called \emph{curvature estimates}). In Section \ref{sec:4} we will prove such a curvature estimate for the \emph{uniformly elliptic} case (Theorem \ref{curves2}). This estimate is based on a paper by Rosenberg, Souam and Toubiana \cite{RST} on stable CMC surfaces in Riemannian $3$-manifolds. A key difficulty in our situation is that, in the natural blow-up process that one uses to obtain such curvature estimates, the equation \eqref{weq2} is lost in the limit. That is, even if one finds a limit surface after the blow-up, this limit surface might not be, in general, an elliptic Weingarten surface anymore. Theorem A, where no elliptic equation appears, will allow us to control this limit surface.
As a consequence of this estimate, we obtain from Lemma A and Theorem \ref{unifeth}:

\vspace{0.2cm}

\noindent {\bf Theorem C:}  \emph{Planes and cylinders are the only complete, uniformly elliptic Weingarten surfaces in $\R^3$ whose Gauss map image is contained in a closed hemisphere}.

\vspace{0.2cm}

Note that Theorem C solves the Bernstein problem for multigraphs in the uniformly elliptic case, and also extends the Hoffman-Osserman-Schoen theorem from CMC surfaces to uniformly elliptic Weingarten surfaces. In order to explain our results in the non-uniformly elliptic case, it is convenient to rewrite the elliptic Weingarten equation \eqref{weq2} in terms of the principal curvatures $\kappa_1,\kappa_2$ of the surface as,
\begin{equation}\label{wein2}
\kappa_2=f(\kappa_1).
\end{equation}
Here, $f$ is a $C^2$ function on an open interval $I_f\subset \R$, that satisfies $f'<0$ (by ellipticity) and $f\circ f = {\rm Id}$ (by symmetry of the relations $2H=\kappa_1+\kappa_2$ and $K=\kappa_1 \kappa_2$). Moreover, $I_f$ is of the form $(a,\8)$, $(-\8,b)$ or $\R$, and the graph $\{(x,f(x)): x\in I_f\}$ is a complete curve in $\R^2$, which is symmetric with respect to the line $y=x$. The uniform ellipticity condition \eqref{unife2} is written for $f$ in \eqref{wein2} as 
\begin{equation}\label{unife}0< \Lambda_1 \leq -f'(x) \leq \Lambda_2 \hspace{0.4cm} \text{for all $x\in I_f=\R$,}
\end{equation} where $\Lambda_1,\Lambda_2$ are positive constants. That is, the slope of the graph of $f$ is negative and uniformly bounded away from $0$ and $-\8$. As a matter of fact, by the symmetry properties of $f$, it suffices to impose one of the two inequalities in \eqref{unife} to obtain the other one.

Note that $I_f=\R$ for any uniformly elliptic Weingarten equation. Thus, the most typical examples of non-uniformly elliptic Weingarten equations happen when the function $f$ in \eqref{wein2} is not globally defined on $\R$; for instance, this is the case of the \emph{linear Weingarten} equation $2aH+bK=c$, with $a,b,c\in \R$ satisfying the ellipticity condition $a^2+bc>0$, and $b\neq 0$; see Section \ref{sec:5}.

When $I_f\neq \R$ and $f(0)=0$, we use Theorem A and the family of parallel surfaces to give an affirmative answer to both the Bernstein problem and the generalized Bernstein problem for multigraphs. By Lemma A and Theorem \ref{bernstein} we have:

\vspace{0.2cm}

\noindent {\bf Theorem D:} \emph{Let $\Sigma$ be a complete elliptic Weingarten surface in $\R^3$ whose Gauss map image is contained in a closed hemisphere. Assume that the function $f$ of its associated Weingarten relation \eqref{wein2} satisfies $f(0)=0$ and is not defined in all $\R$. Then $\Sigma$ is a plane.}

\vspace{0.2cm}

As a matter of fact, Theorem D is a particular case of a much more general result where no Weingarten condition is assumed, but only an inequality between the principal curvatures of the surface; see Theorem \ref{th:erre}. From Theorem D we have:

\vspace{0.2cm}

\noindent {\bf Corollary:} \emph{Planes are the only entire graphs in $\R^3$ that satisfy an elliptic Weingarten equation \eqref{wein2} with $I_f\neq \R$.}

\vspace{0.2cm}

Bernstein's problem remains open when $I_f=\R$ and \eqref{wein2} is not uniformly elliptic. For an alternative formulation of the Corollary above when we write the Weingarten equation as \eqref{weq2} instead of as \eqref{wein2}, see Theorem \ref{bernstein2}.

\section{Elliptic Weingarten surfaces}\label{sec:2}

\subsection{The Weingarten equation}\label{sec:21} 

Let us start by clarifying some aspects about the different ways of writing an elliptic Weingarten equation. 

First, a word of caution. It is not a good idea to work directly with the simple form \eqref{wein1} of the Weingarten equation, because it can be misleading. For instance, both planes and round spheres of radius $1/2$ satisfy the simple linear Weingarten equation $K=2H$, which can be proved to be elliptic. At first sight, this would seem to contradict the maximum principle for elliptic PDEs. This is explained by the fact that the equation $K=2H$ actually contains two different elliptic theories (see Figure \ref{fig:dicur} and the discussion below).
\begin{definition}
An \emph{elliptic Weingarten surface} is an immersed oriented surface $\Sigma$ in $\R^3$ whose mean and Gauss curvatures $H,K$ satisfy a relation \eqref{weq2} for some $g\in C^2([0,\8))$.
\end{definition}
Let us justify this definition next. The Weingarten equation \eqref{wein1} for $W\in C^2(\R^2)$ can be rewritten in terms of the principal curvatures $\kappa_1,\kappa_2$ as \begin{equation}\label{weq}\Phi(\kappa_1,\kappa_2)=0,\end{equation} where $\Phi\in C^2(\R^2)$ is symmetric, i.e. $\Phi(k_1,k_2)=\Phi(k_2,k_1)$. With this formulation, the ellipticity condition for the Weingarten equation is written as (see e.g. \cite{Ho}, p. 129)
 \begin{equation}\label{eliwe}
 \frac{\parc \Phi}{\parc k_1} \frac{\parc \Phi}{\parc k_2}>0 \hspace{1cm} \text{if } \ \Phi=0. \end{equation}
 
Thus, if \eqref{eliwe} holds, we see using the symmetry of $\Phi$ that each connected component of $\Phi^{-1}(0)\subset \R^2$ can be written as a complete graph\begin{equation}\label{weq3}
 k_2=f(k_1), \hspace{1cm} 
 \end{equation}
symmetric with respect to the principal diagonal $k_1=k_2$ in $\R^2$, where $f$ is defined on an interval $I_f:=(a,b)\subset \R$, and satisfies the following conditions:

\begin{enumerate}
\item[(i)]
$f$ is $C^2$, and $f'<0$ (by ellipticity).
 \item[(ii)]
$f\circ f = {\rm Id} $ (by symmetry of $\Phi$).
 \item[(iii)]
If $a\neq -\8$, then $b=+\8$ and $f(x)\to +\8$ as $x\to a$.
 \item[(iv)]
If $b\neq +\8$, then $a=-\8$ and $f(x)\to -\8$ as $x\to b$.
\end{enumerate}

Each connected component of $\Phi^{-1}(0)$ gives rise to a different elliptic theory, with different geometric properties. For instance, the already mentioned Weingarten relation $K=2H$ can be rewritten as $\Phi(\kappa_1,\kappa_2)=\kappa_1+\kappa_2- \kappa_1\kappa_2=0$, and it is clear that $\Phi^{-1}(0)$ has two connected components; see Figure \ref{fig:dicur}. In one of them, all surfaces have principal curvatures greater than $1$, and so are convex, while all surfaces of the other connected component have non-positive curvature.

\begin{figure}[htbp] 
    \centering
    \includegraphics[width=.45\textwidth]{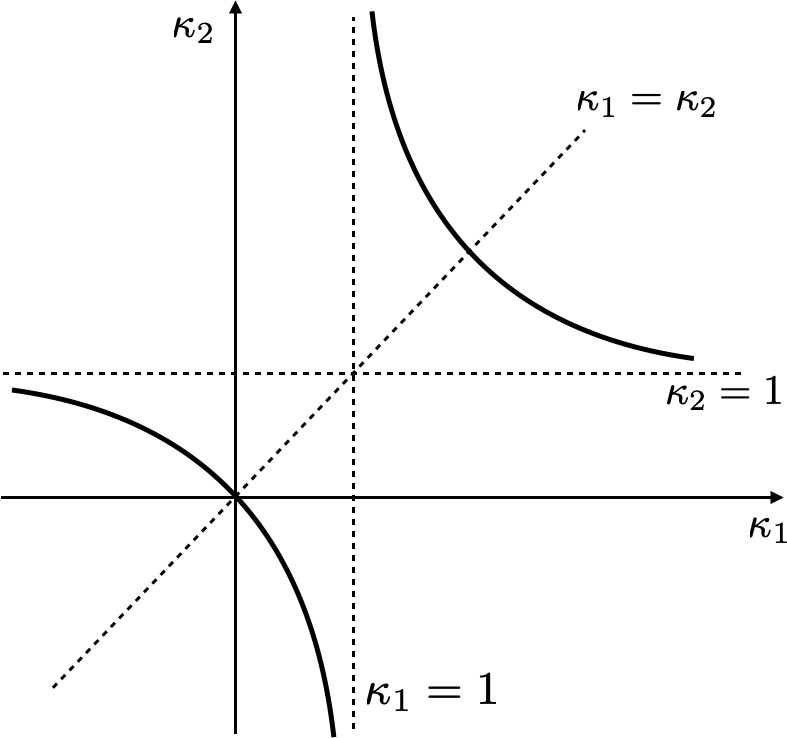}
    \caption{The two connected components of $\Phi^{-1}(0)$ in the $(\kappa_1,\kappa_2)$-plane, for the Weingarten equation $\Phi(\kappa_1,\kappa_2)=0$ corresponding to $K=2H$.}
    \label{fig:dicur}
  \end{figure}

Alternatively, and also by the symmetry and ellipticity conditions on $\Phi$, it is easy to see that each connected component of $\Phi^{-1}(0)$ can be seen as a graph of the form
 \begin{equation}
 \frac{k_1+k_2}{2} = g\left(\frac{(k_1-k_2)^2}{4}\right),
 \end{equation}
where $g\in C^2([0,\8))$ satisfies, by the ellipticity inequality \eqref{eliwe}, the condition
 \begin{equation}\label{elihk}
 4t (g'(t))^2<1 \hspace{1cm} \text{ for all } t\geq 0.
 \end{equation}
This shows that there is no loss of generality in working with \eqref{weq2} or with \eqref{wein2} when dealing with a class of elliptic Weingarten surfaces in $\R^3$, and that both formulations are \emph{essentially} equivalent. The equivalence is not complete because the smoothness of $\Phi$ in \eqref{weq}, or equivalently of $f$ in \eqref{wein2}, does not imply $C^1$-smoothness of $g$ in \eqref{weq2}; see e.g. \cite{HW1}. 

We note that for graphs $z=u(x,y)$, the Weingarten equation \eqref{weq2} is equivalent to the fully nonlinear elliptic PDE
\begin{equation}\label{eedp}F(u_x,u_y,u_{xx},u_{xy},u_{yy})=0,
 \end{equation}
 where $F(p,q,r,s,t):= \cH-g(\cH^2-\mathcal{K})\in C^{2}(\R^5)$, for
 \begin{equation}\label{haka}
 \cH(p,q,r,s,t):=\frac{(1+q^2)r - 2 p q s  + (1+p^2)t}{2(1+p^2+q^2)^{3/2}}, \hspace{0.5cm} \mathcal{K}(p,q,r,s,t):= \frac{rt-s^2}{(1+p^2+q^2)^2}.
 \end{equation}
Here, the ellipticity of \eqref{eedp} is equivalent to the ellipticity condition \eqref{elihk} for $g$. In particular, if $g$ verifies \eqref{elihk}, the class $\cW_g$ of elliptic Weingarten surfaces in $\R^3$ given by \eqref{weq2} satisfies the maximum principle in its usual geometric version.

The number $\alfa:=g(0)$ has a geometric meaning in the class of Weingarten surfaces $\cW_g$, and is called the \emph{umbilical constant} of the class $\cW_g$, because umbilics of any surface in $\cW_g$ have principal curvatures equal to $\alfa$. Note that by making, if necessary, the change $g(t)\mapsto -g(t)$ in \eqref{weq2} while reversing the orientation of the surface, we may assume without loss of generality that $\alfa=g(0)\geq 0$. If $g(0)=0$, planes belong to the Weingarten class $\cW_g$, and the Weingarten equation \eqref{weq2} is said to be \emph{of minimal type}. If $g(0)>0$, spheres of radius $1/\alfa$ with their inner orientation are elements of $\cW_g$. 

When we write the Weingarten equation as \eqref{wein2}, the umbilical constant $\alfa$ is given by the relation $f(\alfa)=\alfa$, and the equation is of minimal type if $f(0)=0$. Moreover, if $f$ is defined at $x=0$ with $f(0)\neq 0$, the cylinders in $\R^3$ of principal curvatures $\{0,f(0)\}$ are elliptic Weingarten surfaces satisfying \eqref{wein2}. 

\begin{definition}
The \emph{curvature diagram} of an immersed oriented surface $\Sigma$ is given by $$\{(\kappa_1(p),\kappa_2(p)) : p \in \Sigma\} \subset \R^2,$$ where $\kappa_1(p)\geq \kappa_2(p)$ are the principal curvatures of $\Sigma$ at $p$. Note that the curvature diagram always lies in the half-plane $x\geq y$ of $\R^2$.
\end{definition}

Let us observe that the curvature diagram of an elliptic Weingarten surface is a subset of a regular curve of $\R^2$ of the form \eqref{weq3}.

\subsection{Quasiconformal Gauss maps and Weingarten surfaces}\label{sec:cuasi}

The fact that a surface in $\R^3$ has quasiconformal Gauss map can be characterized in terms of an inequality for its principal curvatures, see e.g. \cite{Sim} or equation (16.88) in \cite{GT}. We adopt this characterization as a definition here:
 
 \begin{definition}\label{def:cuasic}
An immersed oriented surface $\Sigma$ in $\R^3$ has \emph{quasiconformal Gauss map} if its principal curvatures $\kappa_1\geq\kappa_2$ satisfy at every point of $\Sigma$ the inequality 
 \begin{equation}\label{cuasiconfo}
\kappa_1^2 +\kappa_2^2 \leq 2\gamma\, \kappa_1 \kappa_2,
\end{equation}
for some $\gamma\in \R$.
 \end{definition}
Let us make some comments regarding this definition. If $|\gamma|<1$, the Gauss map $N$ of $\Sigma$ is constant, and thus $\Sigma$ is a piece of a plane. If $\gamma\leq -1$ (resp. $\gamma\geq 1$), then $\Sigma$ has non-positive (resp. non-negative) Gauss curvature, and \eqref{cuasiconfo} is equivalent to the fact that the curvature diagram of $\Sigma$ lies inside a \emph{wedge region} $\cR$ of the half-plane $x\geq y$ limited by two straight lines of $\R^2$ passing through the origin. When $\gamma\leq -1$ these two lines have negative slopes, and so we obtain a region $\cR$ as in Figure \ref{fig:wedges1}, left. When $\gamma\geq 1$, these lines have positive slopes and, since the curvature diagram lies in the half-plane $x\geq y$, we can consider without loss of generality that the region $\cR$ is of the form presented in Figure \ref{fig:wedges1}, right.

\begin{figure}[htbp]
    \centering
    \includegraphics[width=0.9\textwidth]{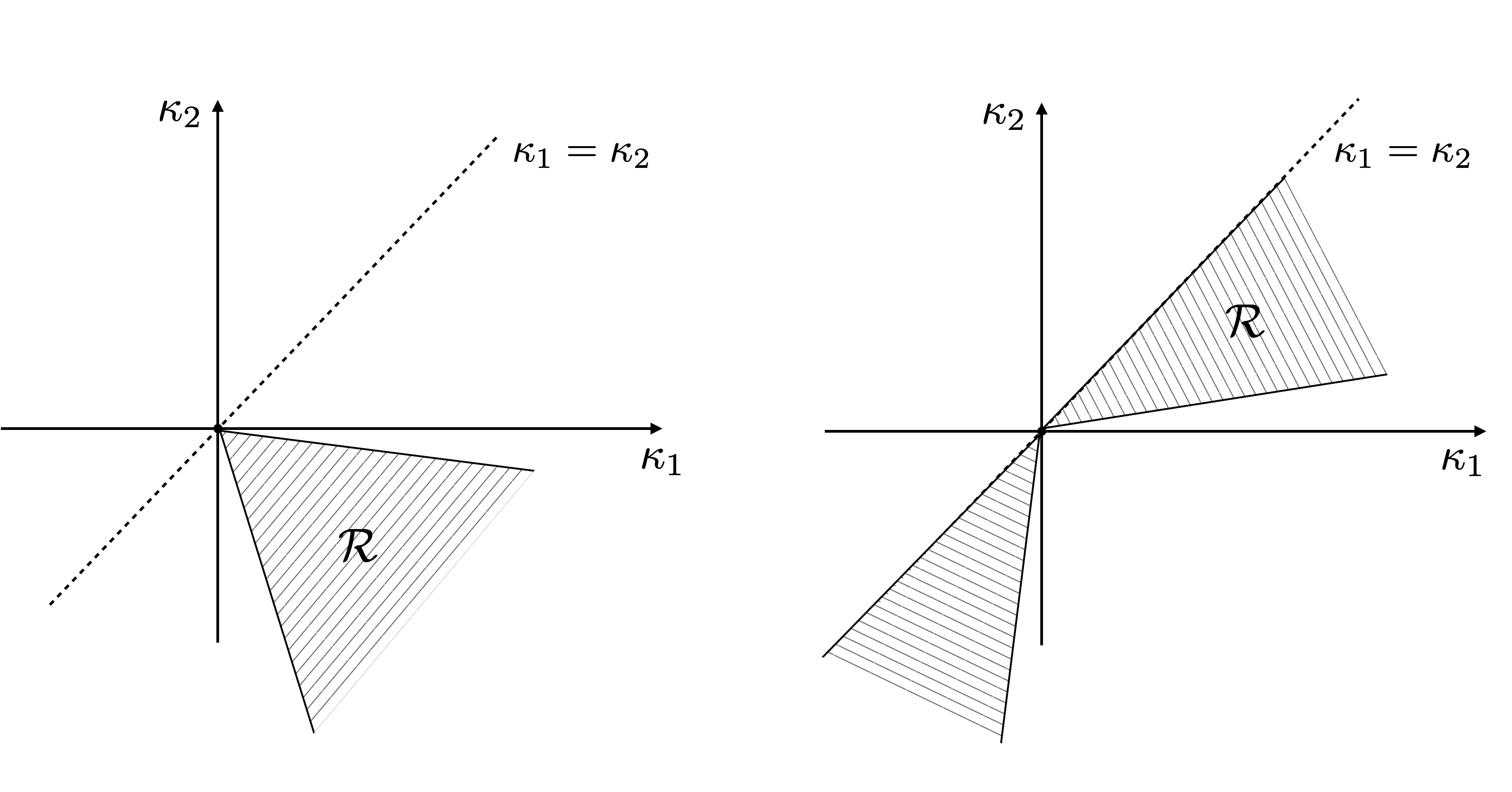}
    \caption{Curvature diagram regions for surfaces with quasiconformal Gauss map, corresponding to the cases $\gamma\leq -1$ (left) and $\gamma\geq 1$ (right) in \eqref{cuasiconfo}.}
    \label{fig:wedges1}
\end{figure}

Let us relate the above concept with the notion of \emph{quasiregular mapping}, i.e., a map $g:\Omega\subset \C\flecha \C$ of class $W_{{\rm loc}}^{1,2}$ that satisfies on $\Omega$ the Beltrami inequality $$|g_{\bar{z}}|\leq \mu |g_z| \hspace{0.5cm} a.e.$$ where $\mu\in [0,1)$ is a constant; see e.g. \cite{AMI}. We remark that quasiregular mappings are sometimes called quasiconformal mappings, as happens in \cite{GT}. By a $K$-quasiregular mapping ($K\geq 1$) we will mean a quasiregular mapping with associated constant $\mu$ given by $\mu=\frac{K-1}{K+1}$.

Let $\Sigma$ be an immersed oriented surface in $\R^3$, with Gauss map $N$. Let $\pi:\S^2\flecha \C\cup \{\8\}$ denote the stereographic projection, and consider the projected Gauss map $g=\pi \circ N:\Sigma\flecha \C\cup \{\8\}$. Consider $z=s+it$ a conformal parameter on $\Sigma$. Then, it is well-known (see \cite{Ken}, p. 90) that $g$ satifies the Beltrami equation
$$H g_z= \phi g_{\bar{z}},$$ where $H$ is the mean curvature of $\Sigma$ and $|\phi |^2 = H^2-K = \frac{1}{4}(\kappa_1-\kappa_2)^2$. Therefore, $$\frac{|g_{\bar{z}}|^2}{|g_z|^2} = \frac{(\kappa_1+\kappa_2)^2}{(\kappa_1-\kappa_2)^2}.$$ From this, it is easily checked that $g$ is quasiregular for some $\mu\in [0,1)$ if and only if \eqref{cuasiconfo} holds for $\gamma=\frac{\mu^2+1}{\mu^2-1}\leq -1.$ Similarly, we have that $g^*(z):=g(\bar{z})$ is quasiregular for $\mu$ if and only if \eqref{cuasiconfo} holds for $\gamma= \frac{1+\mu^2}{1-\mu^2}\geq 1.$

The following convergence result will be used later on.

\begin{lemma}\cite[Corollary 5.5.7]{AMI}\label{lemcuasii}
Suppose $z_0,z_1\in \C$, and let $g_n:\Omega\flecha \C$ be a sequence of $K$-quasiregular mappings defined on an open domain $\Omega\subset \C$, each $g_n$ omitting these two values. Then, there is a subsequence converging locally uniformly on $\Omega$ to a mapping $g$, $$g_{n_k}\flecha g,$$ and $g$ is a $K$-quasiregular mapping (maybe a constant).
\end{lemma}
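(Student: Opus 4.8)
\noindent\emph{Proof proposal.} This is a normal-family (Montel-type) statement for quasiregular maps, and the plan is to reduce it to the classical \emph{Montel theorem} via the \emph{Stoilow factorization} together with the compactness of normalized quasiconformal homeomorphisms. Write $k=(K-1)/(K+1)\in[0,1)$. For each $n$, the complex dilatation $\mu_n:=(g_n)_{\bar z}/(g_n)_z$ is defined a.e. on $\Omega$ with $|\mu_n|\le k$ there; extend $\mu_n$ by $0$ to all of $\C$. By the \emph{measurable Riemann mapping theorem}, let $\psi_n:\C\to\C$ be the unique $K$-quasiconformal homeomorphism with complex dilatation $\mu_n$, normalized by $\psi_n(0)=0$, $\psi_n(1)=1$ (so also $\psi_n(\infty)=\infty$). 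Then, by the composition rule for complex dilatations, $h_n:=g_n\circ\psi_n^{-1}$ is holomorphic on the domain $\Omega_n:=\psi_n(\Omega)$, and, like $g_n$, it omits the (distinct) values $z_0,z_1\in\C$.

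The first step is to pass to the limit in the quasiconformal factors. The family of $K$-quasiconformal self-homeomorphisms of $\C$ fixing $0$ and $1$ is sequentially compact for locally uniform convergence, with all limits again $K$-quasiconformal; this is the standard compactness property, following from a uniform local H\"older estimate for normalized $K$-quasiconformal maps, Arzel\`a--Ascoli, and the closedness of $K$-quasiconformality under locally uniform limits. Passing to a subsequence, $\psi_n\to\psi$ and $\psi_n^{-1}\to\psi^{-1}$ locally uniformly on $\C$, with $\psi$ a $K$-quasiconformal homeomorphism. In particular $\Omega_\infty:=\psi(\Omega)$ is a domain, and every compact $E\subset\Omega_\infty$ is contained in $\Omega_n$ for all large $n$.

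The second step is Montel's theorem for the holomorphic factors. Fix an exhaustion $E_1\subset E_2\subset\cdots$ of $\Omega_\infty$ by compact sets. For each $j$, the maps $h_n$ (for $n$ large) are holomorphic near $E_j$ and, being holomorphic, never take the value $\infty$; combined with omitting $z_0$ and $z_1$, this makes $\{h_n\}$ a normal family there as maps into $\widehat{\C}$. A diagonal argument yields a subsequence along which $h_n\to h$ locally uniformly on $\Omega_\infty$ in the spherical metric, with $h$ either holomorphic or the constant $\infty$. Finally $g_n=h_n\circ\psi_n$, and since $h_n\to h$ and $\psi_n\to\psi$ locally uniformly while the $\psi_n$ are equicontinuous on compacta, one obtains $g_n\to g:=h\circ\psi$ locally uniformly on $\Omega$. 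By the Stoilow factorization, $g=h\circ\psi$ is $K$-quasiregular when $h$ is nonconstant holomorphic, and $g$ is constant (possibly $\equiv\infty$) otherwise; in all cases $g$ is $K$-quasiregular, as claimed.

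The main technical obstacle is the bookkeeping of the moving domains $\Omega_n=\psi_n(\Omega)$: one must ensure that compacta of $\Omega_\infty$ eventually lie inside every $\Omega_n$ so that Montel can be applied uniformly along an exhaustion, and this is precisely where the locally uniform convergence $\psi_n^{-1}\to\psi^{-1}$ and the fact that $\psi$ is a global homeomorphism enter. A minor point, harmless for the uses of this lemma in the paper, is that $h$ and hence $g$ may a priori be the constant $\infty$ — this is the parenthetical ``maybe a constant'' in the statement, and it is no obstruction since constants are trivially $K$-quasiregular. A self-contained alternative avoiding the factorization would be to prove directly that $K$-quasiregular maps omitting two values form an equicontinuous family on compacta (using the hyperbolic metric of $\widehat{\C}\setminus\{z_0,z_1\}$ and a Schwarz-type contraction estimate for quasiregular maps), then apply Arzel\`a--Ascoli and the closedness of the class of $K$-quasiregular maps under locally uniform limits; but that closedness ultimately rests on the same weak compactness of complex dilatations underlying the approach above.
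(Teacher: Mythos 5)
The paper does not prove this lemma; it cites it verbatim from \cite[Corollary 5.5.7]{AMI}, so there is no in-paper proof to compare against. Your argument is the standard one and, to my knowledge, essentially the argument given in that reference: factor each $g_n$ \`a la Stoilow as $g_n=h_n\circ\psi_n$ with $\psi_n$ the normalized solution of the Beltrami equation for $\mu_n:=(g_n)_{\bar z}/(g_n)_z$ (extended by zero), invoke compactness of normalized $K$-quasiconformal self-homeomorphisms of $\C$ for the $\psi_n$, apply Montel--Carath\'eodory to the holomorphic factors $h_n$ (which still omit $z_0,z_1$), and recompose. The one technical point you flag and handle correctly is the bookkeeping of the moving domains $\Omega_n=\psi_n(\Omega)$, which is resolved by the locally uniform convergence $\psi_n^{-1}\to\psi^{-1}$. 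Your remark that the limit $h$ (hence $g$) could a priori be the constant $\infty$ is accurate: the lemma as stated suppresses this, but it is irrelevant for the application in the paper, where the Gauss maps $N_n$ are already known to converge to a finite limit $\tilde N$, so the limiting $g=\pi\circ\tilde N$ is determined and the lemma is used only to conclude it is $K$-quasiregular. In short, the proof is correct and follows what is presumably the cited reference's approach; there is nothing to add beyond the minor stylistic point that normality of the $h_n$ follows directly from Montel--Carath\'eodory (omitting two finite values suffices), not from omitting two values plus $\infty$.
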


Also for future reference, we state the next elementary lemmas.

\begin{lemma}\label{opencuasi}
Let $\Sigma$ be an immersed surface with quasiconformal Gauss map $N$. Then either $\Sigma$ is a piece of plane, or $N$ is an open mapping.
\end{lemma}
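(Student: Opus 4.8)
The plan is to work locally and use the Beltrami-type description of the Gauss map recalled just before the lemma. Fix a point $p_0\in\Sigma$ and a conformal parameter $z=s+it$ on a neighborhood $\Omega$ of $p_0$, and let $g=\pi\circ N:\Omega\flecha\C\cup\{\8\}$ be the projected Gauss map. By the discussion above (see \cite{Ken}), $g$ satisfies $Hg_z=\phi g_{\bar z}$ with $|\phi|^2=H^2-K$, so the quasiconformality inequality \eqref{cuasiconfo} says precisely that one of $g$ or $g^*(z)=g(\bar z)$ is a $K$-quasiregular mapping on $\Omega$, for $K=K(\gamma)$ depending only on $\gamma$. (After possibly composing with $z\mapsto\bar z$, which is a homeomorphism and hence does not affect openness, we may assume $g$ itself is $K$-quasiregular; also, composing with a rotation of $\S^2$ if needed, we may assume $g$ avoids $\infty$ near $p_0$, so $g$ is a genuine $W^{1,2}_{\mathrm{loc}}$ map into $\C$.)

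The key dichotomy is the following standard fact from the theory of quasiregular mappings in the plane (Stoilow factorization): a non-constant $K$-quasiregular map $g:\Omega\flecha\C$ factors as $g=h\circ\chi$, where $\chi:\Omega\flecha\Omega'$ is a quasiconformal homeomorphism and $h:\Omega'\flecha\C$ is holomorphic and non-constant. Since non-constant holomorphic maps are open and $\chi$ is a homeomorphism, $g$ is an open map wherever it is non-constant. Applying this on each connected coordinate neighborhood, we conclude that on each component of $\Sigma$ either $g$ is locally constant, or $g$ is open.

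It remains to turn "$g$ locally constant on some open set" into "$\Sigma$ is a piece of a plane". If $g$ is constant on a nonempty open subset $U\subset\Sigma$, then $N$ is constant on $U$, so $U$ is contained in a plane, and hence the Gauss map image $N(U)$ is a single point; in particular $\kappa_1=\kappa_2=0$ on $U$, i.e.\ $U$ is totally geodesic. But a connected immersed surface that is totally geodesic on a nonempty open set and satisfies \eqref{cuasiconfo} everywhere must be totally geodesic on all of $\Sigma$: indeed, the set where $N$ is locally constant is open by definition, and it is also closed, because \eqref{cuasiconfo} forces $\kappa_1^2+\kappa_2^2\le 2\gamma\kappa_1\kappa_2$, and on the boundary of this set one has $\kappa_1=\kappa_2=0$ by continuity, while near such a point $g$ is again $K$-quasiregular and, being constant on an open subset, is constant by the identity-type theorem for quasiregular maps (a quasiregular map constant on a set of positive measure is constant). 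Hence $N$ is globally constant on the connected surface $\Sigma$, so $\Sigma$ is a piece of a plane. In the remaining case $N$ is non-constant on every coordinate patch, hence open everywhere, which proves the lemma.

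The main obstacle is the bookkeeping at the boundary between the "locally constant" region and its complement: one must invoke the appropriate rigidity of quasiregular mappings (constant on an open set, or on a set of positive measure, implies constant) to rule out a surface that is flat on part of it but genuinely curved elsewhere while still satisfying \eqref{cuasiconfo}; once that is in place, the Stoilow factorization does the rest and the argument is essentially formal.
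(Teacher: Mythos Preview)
Your proof is correct and rests on the same key tool as the paper's: the Stoilow factorization theorem for quasiregular mappings, giving the dichotomy ``constant or open''. The paper's argument is, however, considerably shorter. It simply splits into the three cases $\gamma\leq -1$, $|\gamma|<1$, $\gamma\geq 1$: in the first case $g=\pi\circ N$ is quasiregular on the connected surface, hence constant or open by Stoilow; in the third case the same holds for $g^*(z)=g(\bar z)$; and in the middle case the inequality \eqref{cuasiconfo} forces $\kappa_1=\kappa_2=0$ identically, so $N$ is constant. No local-to-global patching is needed.

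Your open-closed argument (the ``bookkeeping at the boundary'' you flag as the main obstacle) is really just a reproof of the identity principle for quasiregular maps on a connected domain, which is already built into the ``constant or open'' dichotomy. Once you observe that quasiregularity is a conformally invariant local condition, $g$ (or $g^*$) is quasiregular on all of the connected Riemann surface $\Sigma$, and the global dichotomy applies directly; the detour through totally geodesic subsets and continuity of $\kappa_i$ at boundary points is unnecessary. So your approach is sound but more elaborate than required.
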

\begin{proof}
Let $\gamma$ be the constant for which \eqref{cuasiconfo} holds for $\Sigma$, and let $g=\pi \circ N$. If $\gamma\leq -1$, it follows from the above discussion that $g$ is a quasiregular mapping; hence, it is a well-known consequence of the Stoilow factorization theorem (see e.g. \cite{AMI}) that $g$ is an open mapping if $g$ is not constant. So, the result is true for that case. If $\gamma \in (-1,1)$, we know that $N$ is constant. Finally if $\gamma\geq 1$, we know that $g^*(z):=g(\bar{z})$ is quasiregular, and so, again, is either constant or open. The result then follows.
\end{proof}

\begin{lemma}\label{lem:cuw1}
Let $\Sigma$ be a uniformly elliptic Weingarten surface of minimal type. Then, $\Sigma$ has quasiconformal Gauss map, with $\gamma\leq -1$.
\end{lemma}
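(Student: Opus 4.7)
My plan is to extract the quasiconformality inequality \eqref{cuasiconfo} directly from the two hypotheses: $g(0)=0$ (minimal type) and $4t(g'(t))^2 \le \Lambda < 1$ (uniform ellipticity). The key is to integrate the ellipticity bound starting from $t=0$, where $g$ vanishes, to obtain a pointwise bound on $H$ in terms of $H^2 - K$.

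Concretely, the uniform ellipticity gives $|g'(t)| \le \sqrt{\Lambda}/(2\sqrt{t})$, and since $g(0) = 0$, integration yields $|g(t)| \le \sqrt{\Lambda t}$ for every $t \ge 0$. Evaluating at $t = H^2 - K = \tfrac{1}{4}(\kappa_1 - \kappa_2)^2$ on $\Sigma$ and squaring, the relation $H = g(t)$ gives the pointwise inequality
\begin{equation*}
H^2 \le \Lambda (H^2 - K),
\end{equation*}
which rearranges as $(1-\Lambda)H^2 \le -\Lambda K$. In particular $K \le 0$ everywhere on $\Sigma$.

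To get the quasiconformality inequality, I use $\kappa_1^2 + \kappa_2^2 = 4H^2 - 2K$. Plugging the previous bound $H^2 \le \tfrac{\Lambda}{1-\Lambda}(-K)$ in, I obtain
\begin{equation*}
\kappa_1^2 + \kappa_2^2 = 4H^2 - 2K \le \frac{4\Lambda}{1-\Lambda}(-K) - 2K = -2\,\frac{1+\Lambda}{1-\Lambda}\, K = 2\gamma\, \kappa_1 \kappa_2,
\end{equation*}
with $\gamma := -\tfrac{1+\Lambda}{1-\Lambda}$. Since $\Lambda \in (0,1)$, we have $\gamma \le -1$ as required, so \eqref{cuasiconfo} holds with this constant.

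No serious obstacle is expected: the argument is essentially a one-line integration of the ellipticity bound, used to promote the differential inequality on $g$ to a pointwise algebraic inequality between $H^2$ and $K$. The only small subtlety to mention is that $g$ is only assumed $C^2$ on $[0,\infty)$, so the integration of $g'$ from $0$ to $t$ is legitimate and the resulting bound $|g(t)| \le \sqrt{\Lambda t}$ is valid for all $t \ge 0$, including $t = 0$.
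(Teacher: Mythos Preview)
Your proof is correct and follows the same underlying idea as the paper's: integrate the derivative bound from $0$ (where the function vanishes by the minimal-type hypothesis) to trap the curvature diagram in a wedge through the origin. The only difference is that the paper works in the equivalent $\kappa_2=f(\kappa_1)$ formulation, where $f(0)=0$ together with \eqref{unife} makes the wedge containment \eqref{wedge} immediate, whereas you carry out the computation directly in the $g$-formulation and in return obtain the explicit constant $\gamma = -(1+\Lambda)/(1-\Lambda)$.
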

\begin{proof}
By hypothesis, $\Sigma$ verifies \eqref{wein2} with $f(0)=0$, and $f$ satisfies the uniform ellipticity condition \eqref{unife}. Under these asumptions, it is clear that the curvature diagram $(\kappa_1(\Sigma),\kappa_2(\Sigma))\subset \R^2$, $\kappa_1\geq \kappa_2$, lies on a region $\cR$ of the form, 
\begin{equation}\label{wedge}
 \cR= \{(x,y): x\geq y, m_1x\leq y \leq m_2x\}, \hspace{1cm} m_1,m_2<0.
 \end{equation} 
as in Figure \ref{fig:wedges1}, left. Hence, $\Sigma$ has quasiconformal Gauss map, with $\gamma\leq -1$.
\end{proof}

\begin{lemma}\label{lem:cuw2}
Let $\Sigma$ be an elliptic Weingarten surface of minimal type with bounded second fundamental form. Then, $\Sigma$ has quasiconformal Gauss map, with $\gamma\leq -1$.
\end{lemma}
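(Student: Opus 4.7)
My plan is to verify that the wedge-region argument of Lemma~\ref{lem:cuw1} still applies, with the minimal-type hypothesis $f(0)=0$ together with the $C^2$-smoothness of $f$ playing the role that uniform ellipticity played there. First, since the second fundamental form is bounded, both $\kappa_1$ and $\kappa_2=f(\kappa_1)$ are bounded on $\Sigma$. Combined with the structure of $I_f$ described by properties (iii)--(iv), and restricting by symmetry to the branch $\kappa_1\geq \kappa_2$ with $\kappa_1\geq \alpha=0$, this forces $\kappa_1$ to take values in a compact subinterval $[0,M]\subset I_f$ with $M<\sup I_f$: indeed, if $\sup I_f=b<\infty$, then $f(x)\to -\infty$ as $x\to b^-$, so a uniform bound $|\kappa_2|\leq C$ forces $\kappa_1$ to stay uniformly away from $b$. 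In particular, $f$ is $C^2$ on the closed interval $[0,M]$.

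Next, I would consider the auxiliary function $\phi(x):=f(x)/x$, $x>0$. By Taylor expansion of $f$ at $0$, using $f\in C^2$ and $f(0)=0$,
\[
\lim_{x\to 0^+}\phi(x) \;=\; f'(0),
\]
which is strictly negative because $f'<0$ by ellipticity. Hence $\phi$ extends continuously to $[0,M]$ with $\phi(0)=f'(0)<0$, and is therefore bounded between two strictly negative constants: $m_1\leq \phi(x)\leq m_2<0$ on $[0,M]$. Reading this as $m_1\kappa_1\leq \kappa_2\leq m_2\kappa_1$ at every non-umbilic point of $\Sigma$ (and trivially at the umbilic points, where $\kappa_1=\kappa_2=0$), I conclude that the curvature diagram of $\Sigma$ lies in a wedge region $\cR$ of the form \eqref{wedge}.

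Finally, as in the proof of Lemma~\ref{lem:cuw1}, this wedge condition is equivalent to $\Sigma$ having quasiconformal Gauss map with $\gamma\leq -1$: on each ray $y=cx$ with $c\in[m_1,m_2]$, the inequality $\kappa_1^2+\kappa_2^2\leq 2\gamma\,\kappa_1\kappa_2$ amounts to $\gamma\leq (c+1/c)/2$, and the right-hand side is $\leq -1$ for every $c<0$, so a valid $\gamma\leq -1$ can be chosen uniformly on $[m_1,m_2]$. I do not anticipate any serious obstacle; the key point is simply that compactness of the relevant arc of the curvature diagram combined with $f(0)=0$ replaces the uniform lower bound on $-f'$ of the uniformly elliptic setting. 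The only step that requires a bit of attention is the case analysis ensuring that $\kappa_1$ stays away from $\partial I_f$, so that $f(x)/x$ is genuinely controlled on a compact interval.
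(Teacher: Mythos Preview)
Your proposal is correct and takes essentially the same approach as the paper: both argue that the curvature diagram lies in a wedge region $\cR$ of the form \eqref{wedge}, from which the quasiconformality with $\gamma\leq -1$ follows. The paper's proof is a three-line sketch that simply appeals to monotonicity of $f$ and $f(0)=0$; you have spelled out the details the paper leaves implicit, in particular the continuous extension of $\phi(x)=f(x)/x$ to $x=0$ and the case analysis ensuring $\kappa_1$ stays in a compact subinterval of $I_f$.
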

\begin{proof}
Again, $\Sigma$ satisfies \eqref{wein2} with $f(0)=0$. Since this time $|\sigma|^2:=\kappa_1^2+\kappa_2^2$ is bounded, it is clear by monotonicity of $f$ and the condition $f(0)=0$ that the set $(\kappa_1(\Sigma),\kappa_2(\Sigma))\subset \R^2$ lies again on a wedge region $\cR$ of the form \eqref{wedge}, and so $N$ is quasiconformal with $\gamma\leq -1$.
\end{proof}

One should observe that, for a surface $\Sigma$, the condition of having quasiconformal Gauss map is rather weak, in the sense that \eqref{cuasiconfo} is an inequality, and not an equation between the principal curvatures of $\Sigma$. 

\begin{remark}\label{caposi}
Let $\Sigma$ be a complete surface in $\R^3$ satisfying \eqref{cuasiconfo}, for some $\gamma\geq 1$. Then $\Sigma$ has non-negative Gauss curvature, and so, by Sacksteder's theorem \cite{Sa}, $\Sigma$ is the boundary of a convex body of $\R^3$. In particular, either $\Sigma$ is compact, or the Gauss map image $N(\Sigma)$ lies in a closed hemisphere. But now, by Lemma \ref{opencuasi}, it follows that if $\Sigma$ is not compact, then either $\Sigma$ is a plane or $N(\Sigma)$ lies in an open hemisphere. In the second case, by convexity, $\Sigma$ must be a proper graph over a convex domain. However, since $\Sigma$ has quasiconformal Gauss map, it follows by Simon's theorem that in this situation, $\Sigma$ must actually be a plane (see Theorem 3.1 in \cite{Sim}, and the end of the proof of Theorem \ref{alfa} in this paper).

In particular, any complete, non-compact surface in $\R^3$ satisfying \eqref{cuasiconfo}, for some $\gamma\geq 1$ is a plane.

In contrast, there exist many complete, non-compact surfaces $\Sigma$ that satisfy \eqref{cuasiconfo} for some $\gamma\leq -1$. For instance, any complete minimal surface in $\R^3$ satisfies \eqref{cuasiconfo} for $\gamma=-1$.  \end{remark}

\subsection{Weingarten multigraphs}

Given an immersed oriented surface $\Sigma$ in $\R^3$ with unit normal $N:\Sigma\flecha \S^2$, we will refer to the set $N(\Sigma)\subset \S^2$ as the \emph{Gauss map image} of $\Sigma$.

\begin{definition}
A surface $\Sigma$ in $\R^3$ is a \emph{multigraph} if there is some plane $P$ in $\R^3$ such that $\Sigma$ can be seen locally around each point $p\in \Sigma$ as a graph over $P$. Equivalently, $\Sigma$ is a multigraph if its Gauss map image is contained in an open hemisphere of $\S^2$.
\end{definition}

After a change of Euclidean coordinates, we can always assume without loss of generality for a multigraph $\Sigma$ that $N(\Sigma)$ lies in the upper open hemisphere $\S_+^2$, and so, that $\nu>0$ where $\nu:=\esiz N,e_3\esde$ is the \emph{angle function} of $\Sigma$. Obviously, any graph $z=u(x,y)$ is a multigraph.

Note that the surfaces with vanishing angle function, $\nu \equiv 0$, are open pieces of flat surfaces of the form $\Gamma \times \R$, where $\Gamma$ is some immersed curve in $\R^2$.

\begin{lemma}\label{numulti}
Let $\Sigma$ be an elliptic Weingarten surface, and assume that its angle function satisfies $\nu \geq 0$. Then either $\nu\equiv 0$, or $\nu>0$ on $\Sigma$. If $\nu\equiv 0$, then $\Sigma$ is a piece of a plane or a cylinder.
\end{lemma}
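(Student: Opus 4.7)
The plan is to exploit the fact that vertical translations in $\R^3$ are ambient isometries, hence preserve the Weingarten class $\cW_g$; this forces the angle function $\nu=\langle N,e_3\rangle$ to lie in the kernel of the linearization of the elliptic Weingarten equation, and the strong maximum principle for this linear elliptic operator then produces the desired dichotomy.

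Concretely, the first step is to consider the family $\Sigma_t:=\Sigma+te_3$. Since $\Phi[\Sigma]:=H-g(H^2-K)\equiv 0$ is preserved by $t$, differentiating $\Phi[\Sigma_t]$ at $t=0$ and discarding the tangential part of $e_3$ yields $L\nu=0$, where $L$ is the linearization of $\Phi$ at $\Sigma$. From the PDE formulation \eqref{eedp}--\eqref{haka}, $L$ is a linear second-order operator on $\Sigma$, whose ellipticity is exactly the condition \eqref{elihk} (equivalently \eqref{eliwe}); in particular $L$ is uniformly elliptic with bounded coefficients on compact pieces of $\Sigma$. Since $\nu\geq 0$ is a solution of $L\nu=0$, a version of the strong minimum principle for non-negative solutions of a linear elliptic equation, valid without a sign assumption on the zero-order coefficient of $L$, then yields that on the connected surface $\Sigma$, either $\nu\equiv 0$ or $\nu>0$.

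For the case $\nu\equiv 0$, the Killing field $e_3$ is everywhere tangent to $\Sigma$, so $\Sigma$ is invariant under vertical translations and locally splits as $\Gamma\times\R$ for some plane curve $\Gamma\subset\{z=0\}$. Such a surface has $K=0$ and $2H$ equal to the curvature $\kappa$ of $\Gamma$, so \eqref{weq2} reduces to $h(H):=H-g(H^2)=0$. The ellipticity inequality gives $h'(H)=1-2H\,g'(H^2)$ with $|2H\,g'(H^2)|<1$, so $h$ is strictly increasing and $H$ is constant on the connected $\Sigma$; hence $\Gamma$ has constant curvature and $\Sigma$ is a piece of a vertical plane or of a right circular cylinder.

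The step I expect to require the most care is the strong maximum principle argument, because the zero-order term of $L$ has no a priori sign, so the standard Hopf formulation does not apply directly. I would handle this in the usual way by invoking the Harnack-type strong minimum principle for non-negative solutions of a linear elliptic equation with bounded coefficients, which holds without any sign restriction on the zero-order term. The remaining points---identifying $\nu$ as a Jacobi field and verifying that ellipticity of $\Phi$ descends to ellipticity of $L$---are routine once the linearization is written down explicitly from \eqref{haka}.
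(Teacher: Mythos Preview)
Your approach is correct and genuinely different from the paper's. The paper argues by comparison: at a point $q_0$ with $\nu(q_0)=0$ it places the tangent vertical plane or cylinder $C$ (with principal curvatures $\{0,f(0)\}$), writes both $\Sigma$ and $C$ as graphs $x=h_i(y,z)$ over their common vertical tangent plane, and notes that the difference $h=h_1-h_2$ satisfies a linear homogeneous elliptic equation with $C^1$ coefficients. Bers' local representation \eqref{berfor} then says that, unless $h\equiv 0$, the leading term of $h$ at the origin is a harmonic polynomial of degree $\geq 2$, so the image of $Dh$ cannot lie in a half-plane; since $h_z=(h_1)_z$ controls the sign of $\nu$, this contradicts $\nu\geq 0$. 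Thus $h\equiv 0$ and $\Sigma$ coincides locally with $C$. The case where $f$ is not defined at $0$ is handled separately via openness of the Gauss map.

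Your route---identifying $\nu$ as a solution of the linearized operator $\cL_g$ in \eqref{operator} via the Killing field $e_3$, and then invoking the strong minimum principle for nonnegative solutions (valid without a sign condition on the zero-order term, by the standard device of absorbing $c^+$ into the right-hand side)---is the natural extension of the CMC Jacobi-field argument and is arguably more streamlined: it avoids the case distinction on whether $0\in I_f$, and it uses the linearization \eqref{operator} that the paper already sets up. Your handling of the $\nu\equiv 0$ case is also cleaner than the paper's: rather than obtaining it as a byproduct of the comparison with $C$, you observe directly that on $\Gamma\times\R$ the Weingarten relation becomes $h(H)=H-g(H^2)=0$, and ellipticity forces $h'>0$, so $H$ (hence the curvature of $\Gamma$) is constant. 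One point to make explicit in a write-up: when you ``discard the tangential part of $e_3$'', the reason this is legitimate is that the tangential contribution to the first variation is $e_3^T(\cW)$, which vanishes because $\cW\equiv 0$ identically on $\Sigma$.
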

\begin{proof}
Let $f$ be the smooth function defining the relation \eqref{wein2}. If $f$ is not defined at $0$, then by properties (i)-(iv) of $f$ (see Section \ref{sec:21}) it follows that $\Sigma$ has positive curvature. In particular, its Gauss map $N:\Sigma\flecha \S^2$ is a local diffeomorphism, hence an open mapping. Thus, if $\nu\geq 0$, it must actually happen that $\nu >0$, and Lemma \ref{numulti} holds in this case.

Assume next that $f$ is defined at $0$, and let $q_0\in \Sigma$ satisfy $\nu(q_0)=0$. Without loss of generality, we assume that $q_0$ is the origin and $N(q_0)=(1,0,0)$. If $f(0)=0$ (resp. $f(0)\neq 0$), let $C$ denote the vertical plane (resp. the vertical cylinder with principal curvatures $0$ and $f(0)$) that is tangent to $\Sigma$ at $q_0$, with the same orientation. Note that both $\Sigma$ and $C$ satisfy the elliptic Weingarten relation \eqref{wein2}. Thus, both $\Sigma$ and $C$ can be seen around the origin as graphs $x=h_i(y,z)$, $i=1,2$, over their common tangent plane, and $h_1,h_2$ are solutions to the same $C^2$ fully nonlinear elliptic PDE, associated to \eqref{wein2}. Observe that this PDE is of the form \eqref{eedp}-\eqref{haka}. In these conditions, it is well known that the difference $h=h_1-h_2$ satisfies a second order, linear, homogeneous elliptic PDE $L[h]=0$ with $C^1$ coefficients. Note that $Dh(0,0)=(0,0)$, where $Dh=(h_y,h_z)$. Also $h_z=(h_1)_z$, since $h_2(y,z)$ does not actually depend on $z$, because it corresponds to the vertical plane (or cylinder) $C$.

Assume that $h$ is not identically zero. Then, it is a standard fact  (see e.g. Bers \cite{Be}) that there exist coordinates $(u,v)$ obtained by a linear transformation of $(y,z)$ such that $h$ has the local representation around the origin 
 \begin{equation}\label{berfor}
h(u,v)=w(u,v) + o(\sqrt{u^2+v^2})^k
 \end{equation} 
where $w(u,v)$ is a homogeneous harmonic polynomial of degree $k\geq 2$. In particular, the image of $Dh$ cannot lie in a half-plane around the origin. Thus, there exist points arbitrarily close to the origin where $h_z>0$. Hence, $\nu<0$ at those points, since $$\nu=\frac{-(h_1)_z}{\sqrt{1+(h_1)_y^2+(h_1)_z^2}},$$ and $h_z=(h_1)_z$. This contradicts that $\nu\geq 0$ in $\Sigma$. Therefore, $h$ must be identically zero, and so $\Sigma$ is a piece of the cylinder (or plane) $C$. This concludes the proof.
\end{proof}

\subsection{The linearized Weingarten equation}

Let $\Sigma$ be an immersed oriented surface in $\R^3$ with unit normal $N$. Given $\phi\in C_0^{\8}(\Sigma)$, consider the normal variation of $\Sigma$ associated to $\phi$,
 \begin{equation}\label{vari}
(p,\tau)\in \Sigma\times (-\ep,\ep)\mapsto p+\tau \phi(p) N(p),
 \end{equation}
 and denote by $\cH(\tau)$ and $\mathcal{K}(\tau)$ the mean curvature and Gauss curvature of the corresponding surface $\Sigma_{\tau}$ in \eqref{vari}. In \cite{RS} it is shown (see equation (1.1)) that
 \begin{equation}\label{varifor}
 2 \cH'(0)= \Delta \phi + (4H^2-2K)\phi, \hspace{0.5cm} \mathcal{K}'(0)= {\rm div}(T_1 \nabla \phi) + 2HK \phi.
 \end{equation}
Here, $H,K$ denote the mean curvature and Gauss curvature of $\Sigma$; $\Delta, {\rm div}, \nabla$ are the Laplacian, divergence and gradient operator on $\Sigma$, and $T_1:= 2H {\rm Id}-S$, where $S$ is the shape operator of $\Sigma$.

Assume now that $\Sigma$ satisfies an elliptic Weingarten equation \eqref{weq2}. Let $\{\Sigma_{\tau}\}_{\tau\in (-\ep,\ep)}$ be a normal variation of $\Sigma$ associated to some function $\phi\in C_0^{\8}(\Sigma)$, and denote 
 \begin{equation}\label{wt}
\cW(\tau):= \cH(\tau)- g(\cH(\tau)^2 -\mathcal{K}(\tau)):\Sigma \times (-\ep,\ep)\flecha \R,
 \end{equation}
with the previous notation, where $g$ is the function in \eqref{weq2}. Then, taking into account \eqref{varifor}, the linearized operator of the Weingarten equation \eqref{weq2} satisfied by $\Sigma$ is
\begin{equation}\label{operator}
\cW'(0)=\cL_g [\phi] = \left(\frac{1-2g g'}{2}\right) \Delta \phi + g' {\rm div} (T_1 \nabla \phi) + q \phi,
\end{equation}
where $g,g'$ are evaluated at $H^2-K$, and $$q:=2g^2(1-2 g g') -(1-4 g g')K.$$ We remark that $\cL_g$ in \eqref{operator} is a linear elliptic operator, since \eqref{weq2} is elliptic.

\section{Multigraphs with quasiconformal Gauss map}\label{sec:3n}

In this section we prove:

\begin{theorem}\label{alfa}
Planes are the only complete multigraphs with quasiconformal Gauss map and bounded second fundamental form.
\end{theorem}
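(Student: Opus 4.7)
\emph{Strategy.} Assume for contradiction that $\Sigma$ is not a plane. We will show that $\Sigma$ is an entire graph, after which the deep theorem of Simon (Theorem 4.1 of \cite{Sim}) forces $\Sigma$ to be a plane, the desired contradiction. The argument is a blow-up in the spirit of Hauswirth-Rosenberg-Spruck \cite{HRS}, with the openness of the Gauss map (Lemma \ref{opencuasi}) playing the role of the strong maximum principle that drives their proof for CMC surfaces.

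First, we reduce to the case $\gamma\leq -1$ in \eqref{cuasiconfo}. A multigraph is never closed: if $\Sigma$ were compact and bounded a domain, the divergence theorem would give $\int_\Sigma\langle N,v\rangle\,dA=0$, contradicting $\langle N,v\rangle>0$ for $v$ the pole of the hemisphere containing $N(\Sigma)$. Thus Remark \ref{caposi} (Sacksteder plus Simon) dispatches the case $\gamma\geq 1$. Assume henceforth $\gamma\leq -1$; by Lemma \ref{opencuasi}, $N$ is an open mapping.

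Set $\nu:=\langle N,e_3\rangle$ and $\nu_0:=\inf_\Sigma\nu\geq 0$. Pick $p_n\in\Sigma$ with $\nu(p_n)\to\nu_0$ and, passing to a subsequence, $N(p_n)\to q_\infty\in\S^2$, so $\langle q_\infty,e_3\rangle=\nu_0$. The translates $\Sigma_n:=\Sigma-p_n$ are complete, contain the origin, and satisfy a uniform bound on $|\sigma|$; a standard Cheeger-Gromov-type compactness argument yields a $C^k_{\mathrm{loc}}$-convergent subsequence (for each $k\geq 2$) with limit a complete immersed surface $\Sigma_\infty\ni 0$ inheriting the bound on $|\sigma|$, the quasiconformality of the Gauss map (with the same $\gamma$), and the normalizations $\nu_\infty\geq \nu_0$, $\nu_\infty(0)=\nu_0$, $N_\infty(0)=q_\infty$. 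Apply Lemma \ref{opencuasi} to $\Sigma_\infty$: either $\Sigma_\infty$ is a plane, or $N_\infty$ is an open mapping. In the second case, $N_\infty$ maps a neighborhood of $0$ in $\Sigma_\infty$ onto an open neighborhood of $q_\infty$ in $\S^2$, which necessarily contains points of height strictly less than $\nu_0$, contradicting $\nu_\infty\geq\nu_0$. Hence $\Sigma_\infty$ is a plane with $N_\infty\equiv q_\infty$.

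If $\nu_0>0$, the vertical projection $\Pi:\Sigma\to\R^2$ satisfies $\nu_0\,|v|\leq|d\Pi(v)|\leq |v|$ for every $v\in T\Sigma$, so $\Pi$ is a bi-Lipschitz local diffeomorphism. A standard path-lifting argument (any rectifiable curve in $\R^2$ of length $L$ admits a lift in $\Sigma$ of length at most $L/\nu_0$, which converges by completeness) shows that $\Pi$ is a Riemannian covering map; as $\R^2$ is simply connected, $\Pi$ is a global diffeomorphism and $\Sigma$ is an entire graph, and Simon's theorem delivers the contradiction. The case $\nu_0=0$—in which $\Sigma_\infty$ is a vertical plane and the projection argument collapses—is the main technical obstacle. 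A natural route is an iterated blow-up: choose a direction $w$ with $\langle q_\infty,w\rangle=1$ so that the $\Sigma_n$-angle function with respect to $w$ tends to $1$ near the origin, and combine the argument above (applied in direction $w$) with the compactness Lemma \ref{lemcuasii} for the quasiregular maps $g_n=\pi\circ N_n$; the aim is to preclude that the $\Sigma_n$ degenerate onto a vertical plane without contradicting either completeness of $\Sigma$ or the uniform bound on $|\sigma|$. Carrying this last step through cleanly is where I expect the main effort to lie.
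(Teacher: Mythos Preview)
Your proposal has a genuine gap: the case $\nu_0=0$ is precisely the heart of the theorem, and you explicitly leave it unresolved. The reduction to $\gamma\leq -1$ and the treatment of $\nu_0>0$ via the covering-map argument are both fine, but if $\Sigma$ is a non-planar complete multigraph with quasiconformal Gauss map, there is no reason whatsoever to expect $\inf_\Sigma\nu>0$; on the contrary, the whole difficulty is that $\nu$ may tend to zero along sequences going off to infinity. Your suggested ``iterated blow-up'' in direction $w$ does not lead anywhere obvious: after the first limit you only know that near $p_n$ the surface looks like a fixed vertical plane, but switching to the horizontal direction $w$ destroys the multigraph hypothesis (there is no global sign condition on $\langle N,w\rangle$), so you cannot rerun the argument. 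There is also a secondary issue: the claim of $C^k_{\mathrm{loc}}$ convergence for $k\geq 2$ is unjustified. Bounded second fundamental form only yields local graphs with uniformly bounded $C^2$ norm, hence $C^{1,\alpha}$ subconvergence; without an elliptic PDE there is no Schauder or Nirenberg estimate to upgrade this. One can still recover that the limit Gauss map is quasiregular via Lemma~\ref{lemcuasii}, but this requires passing to conformal parameters and arguing carefully, not a one-line appeal to Cheeger--Gromov.

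The paper's proof takes a completely different route. Rather than looking at $\inf_\Sigma\nu$ globally, it fixes an arbitrary point $p$, takes the maximal disk $D$ over which a neighborhood of $p$ is a vertical graph $z=u(x,y)$ (finite by Simon's entire-graph theorem), and analyzes what happens at a boundary point $q\in\partial D$ where $u$ fails to extend. Using $C^{1,\alpha}$ compactness together with the quasiregular convergence Lemma~\ref{lemcuasii} (applied after uniformizing to conformal coordinates), one shows that the translated pieces of $\Sigma$ converge to a vertical planar disk through $q$, orthogonal to $\partial D$ at $q$, and that $u$ extends across a one-sided tubular neighborhood of the tangent line $\Gamma$ to $\partial D$ at $q$, with $u\to\pm\infty$ at $\Gamma$. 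Iterating this extension argument along $\Gamma$ and then in a transversal direction, one concludes that $\Sigma$ is globally a graph over a half-plane or a strip $\Omega$, with $u\to\pm\infty$ on $\partial\Omega$; in particular $\Sigma$ is proper. The contradiction then comes not from Simon's entire-graph theorem but from his \emph{local} H\"older estimate (equation (3.26) in \cite{Sim}): letting the scale $\varrho\to\infty$ forces $N$ to be constant, impossible since $\Omega\neq\R^2$.
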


\begin{remark}\label{2ff}
Let $\Sigma$ be an immersed surface in $\R^3$ with bounded second fundamental form $\sigma$, that is, we have $|\sigma(p)|\leq \sigma_0<\8$ for some constant $\sigma_0$, for all $p\in \Sigma$. This implies a well-known \emph{uniformicity property} for $\Sigma$ as a local graph, see e.g. Proposition 2.3 in \cite{RST}. In our conditions, this property implies that there exists some $\delta=\delta(\sigma_0)>0$ for which the following holds: 

Any $p\in \Sigma$ has a neighborhood $\cW_p\subset \Sigma$ that is a graph over the disk $B_{2\delta}(0)\subset T_p\Sigma$ centered at the origin and of radius $2\delta$ of its tangent plane at $p$. Also, if $u$ denotes the function that defines this graph in any such disk $B_{2\delta}(0)$, it holds $|Du|<1$ in $B_{2\delta}(0)$, where $Du$ denotes the Euclidean gradient of $u$. Moreover, there exists $\mu=\mu(\sigma_0)>0$, such that the $C^2$ norm of $u$ is at most $\mu/2$ on any of the disks $B_{2\delta}(0)$; that is, $||u||_{C^2(B_{2\delta}(0))} <\mu/2$. 

We remark that $\delta,\mu$ only depend on the bound $\sigma_0$ for the second fundamental form of $\Sigma$, and not on $p$ or $\Sigma$. 
\end{remark}

 \begin{proof}
We will argue by contradiction. Let $\Sigma$ be a complete multigraph with bounded second fundamental form, and assume that its Gauss map $N$ is quasiconformal but $\Sigma$ is not a plane. Up to an Euclidean change of coordinates, we can suppose that $N(\Sigma)$ is a subset of $\S_+^2$, and so $\Sigma$ can be locally seen as a graph $z=u(x,y)$.

Take from now on an arbitrary point $p\in \Sigma$, and let $R>0$ be the largest value for which an open neighborhood $\cV\subset \Sigma$ of $p$ can be seen as a graph $z=u(x,y)$ over the disk $D=D(\hat{q},R)$, where $\hat{q}=\pi(p)$, with $\pi(x,y,z):=(x,y)$. See Figure \ref{fig:dicur2}. That this radius $R$ exists, i.e., that $R$ is not infinite, follows from Simon's theorem according to which entire graphs with quasiconformal Gauss map are planes (\cite[Theorem 4.1]{Sim}).

\begin{figure}[htbp]
    \centering
    \includegraphics[width=.8\textwidth]{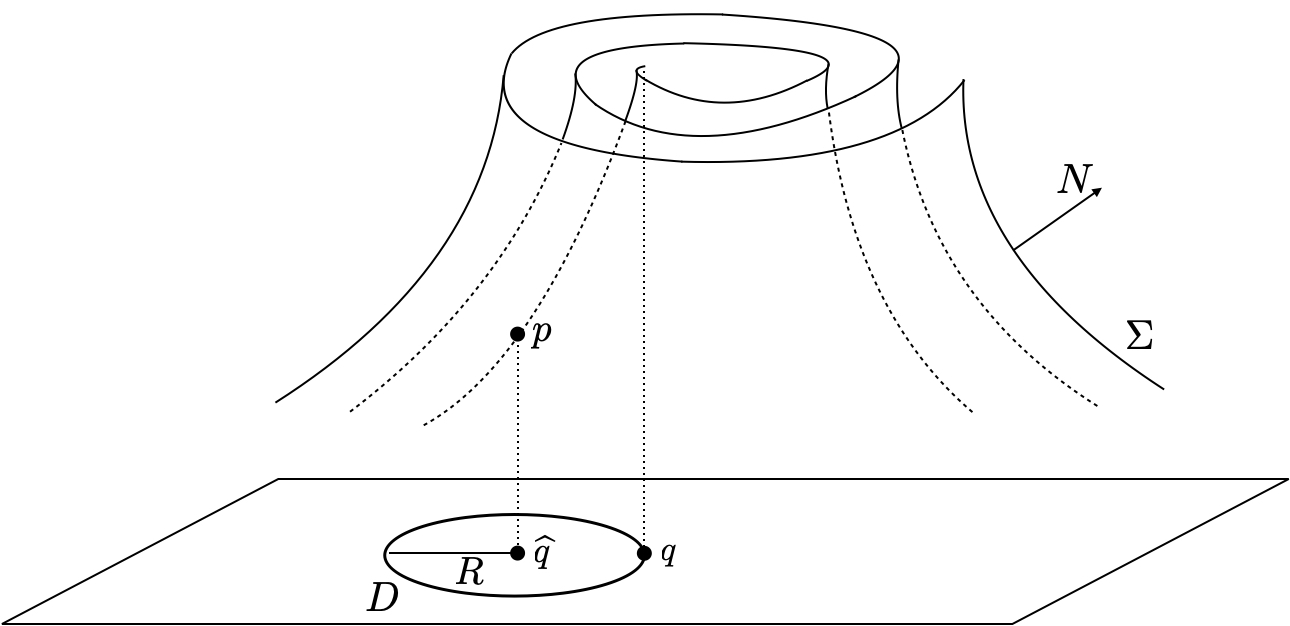}
    \caption{Initial situation of the proof of Theorem \ref{alfa}.}
    \label{fig:dicur2}
\end{figure}

Take $q\in \parc D$ such that $u$ cannot be extended to a neighborhood of $q$, and consider the straight line $\Gamma$ in $\R^2$ that is tangent to $\parc D$ at $q$. 
We will let $\Gamma(s)$ be an arclength parametrization of $\Gamma$, with $\Gamma(0)=q$.

For the rest of the proof, we will let $\delta>0$ denote the constant in Remark \ref{2ff} associated to the bound $|\sigma|\leq \sigma_0<\8$ for the second fundamental form $\sigma$ of $\Sigma$.

Given $s_0\in \R,\ep>0$, we let $\cN(s_0,\ep)$ denote the open \emph{one-sided tubular} set 
\begin{equation}\label{halftu}
\cN(s_0,\ep):= \{\Gamma(s) + \tau\, \eta(s) : |s-s_0|<\delta, \tau \in (0,\ep) \}\subset \R^2,
\end{equation}
 where $\eta(s)$ denotes the unit normal of $\Gamma(s)$ that, at $q$, points in the direction $\hat{q}-q$, where $\hat{q}$ is the center of $D$. See Figure \ref{fig:dentrofuera0}. 

\begin{figure}[htbp]
    \centering
    \includegraphics[width=.5\textwidth]{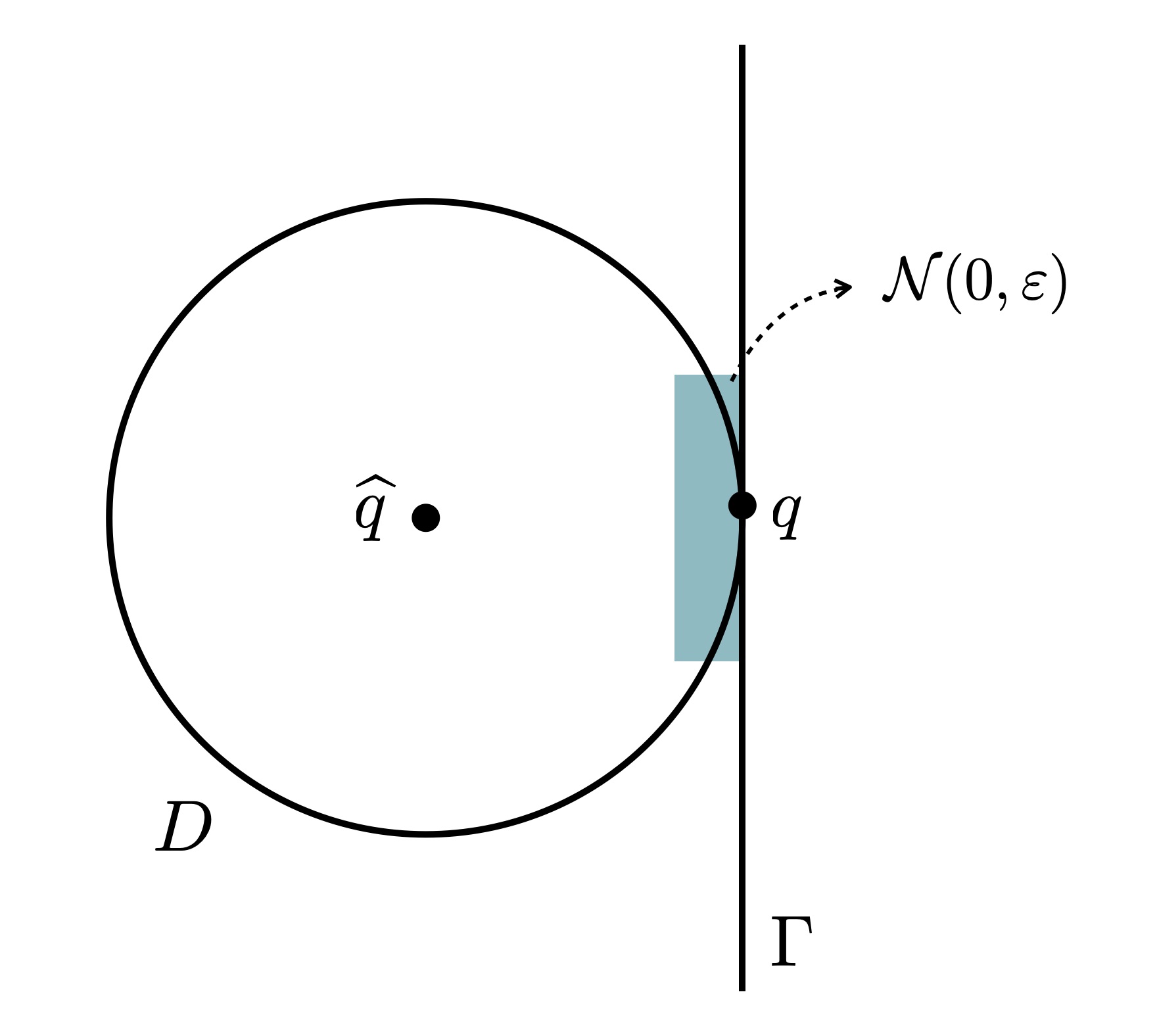}
    \caption{The one-sided tubular domain $\cN(0,\ep)$ at $q$. }
    \label{fig:dentrofuera0}
\end{figure}

\begin{assertion}\label{ass:2}
In the above conditions, there exists $\ep >0$ such that $u(x,y)$ extends smoothly to $D\cup \cN (0,\ep)$.  
Moreover, this extension satisfies that $u(x,y)$ diverges to either $+\8$ or $-\8$ when $(x,y)\in \cN (0,\ep)$ approaches $\Gamma$. 
\end{assertion}
\begin{proof}
The proof is inspired by a similar result of Hauswirth-Rosenberg-Spruck in \cite{HRS} for the case of CMC surfaces in the product space $\H^2\times \R$. Some of the arguments here are however different, since we are under more general conditions. 

Let $\{q_n\}_n\subset D$ converge to $q\in \parc D$, and denote $p_n:=(q_n,u(q_n))\in \cV\subset \Sigma$. 
For each $p'\in \cV$, let $\cW_{p'}\subset \Sigma$ denote the neighborhood of $p'$ that can be seen as a normal graph over $B_{2\delta}(0)\subset T_{p'} \Sigma$, where $\delta>0$ is the one in Remark \ref{2ff}.
Let $\nu:\Sigma\flecha (0,1]$ denote the angle function of $\Sigma$. 

Assume that for some subsequence of the $\{p_n\}$, it happens that $\nu(p_n)\geq \nu_0>0$ for some $\nu_0>0$. Then, the slopes of the planes $T_{p_n}\Sigma$ would be uniformly bounded. By the above property, there would exist neighborhoods $\cW_{p_n}\subset \Sigma$ of the points $p_n$, and some fixed $\ep>0$, with the following property: \emph{for $n$ large enough, each $\cW_{p_n}$ is a vertical graph over the disk in $\R^2$ of center $q_n$ and radius $\ep>0$}. Since $\{q_n\}$ converges to $q$, this contradicts the assumption that $u$ cannot be extended smoothly across $q$.

Therefore, we must have $\nu(p_n)\to 0$, i.e., the tangent planes $T_{p_n} \cV$ become vertical as $n\to \8$. Up to a subsequence, assume that $\{N(p_n)\}_n$ converges to some horizontal vector $N_0\in \parc \S_+^2$.

Denote $\cW_n:= \Phi_n(\cW_{p_n})$, where $\Phi_n$ is the vertical translation of $\R^3$ sending $p_n$ to $(q_n,0)$. Also, let $\Pi_0$ denote the vertical plane of $\R^3$ that contains $q$ and is orthogonal to $N_0$. Since $\{N(p_n)\}_n \to N_0$, then, for $n$ sufficiently large, there exists an open neighborhood $\mathcal{U}_{n}\subset\cW_n$  of $(q_n,0)$ such that:
\begin{enumerate}
\item
$\cU_n$ can be seen as the graph of a function $v_n$ defined on the disk $B_\delta\subset \Pi_0$ centered at $q$ and of radius $\delta$ of the vertical plane $\Pi_0$.
 \item
The $C^2$-norm of the function $v_n$ in $B_\delta$ is at most $\mu$ (by Remark \ref{2ff} and the definition of $\cW_n$).
\end{enumerate}

By standard embedding theorems of Holder spaces (see e.g. \cite[Lemma 6.36]{GT}), it follows that the set $\{v_n\}$ is precompact in the $C^{1,\alfa}$-norm over $B_{\delta}\subset\Pi_0\equiv\R^2$, for any $\alfa\in (0,1)$. Thus, a subsequence of the $v_n$ converges uniformly in the $C^{1,\alfa}(B_{\delta})$-norm to some function $v^0 \in C^{1,\alfa} (B_{\delta})$. So, the surfaces $\cU_n$ converge (up to subsequence) in the $C^{1,\alfa}$-norm in compact sets to some limit surface in $\R^3$, which is a $C^{1,\alfa}$-graph over $B_{\delta}$ . Obviously, the Gauss maps $N_n$ of the surfaces $\cU_n$ are quasiconformal (note that each $\cU_n$ is a translation of an open subset of $\Sigma$, that has quasiconformal Gauss map). 

Let us now reparametrize the topological disks $\cU_n$ and the limit surface in graphical coordinates over $B_{\delta}$. Up to a homothety in these coordinates, we can then view each $\cU_n$ as a smooth map $\psi_n:\D\flecha \R^3$ from the unit disk $\D$, so that they converge in the $C^{1,\alfa}$-norm to the limit immersion $\tilde{\psi}:\D\flecha \R^3$, which is of class $C^{1,\alfa}$. Call $E_n,F_n,G_n$ and $\tilde{E}, \tilde{F},\tilde{G}$ to the coefficients of the metrics $\esiz d\psi_n,d\psi_n\esde $ and $\esiz d\tilde{\psi},d\tilde{\psi}\esde $ with respect to the coordinates $(x_1,x_2)$ of $\D$. Note that $(E_n,F_n,G_n)\to (\tilde{E}, \tilde{F},\tilde{G})$ in the $C^{0,\alfa}$-norm. Then, we can introduce conformal parameters on $\D$ for these metrics, by means of classical uniformation theorems for the Beltrami equation. Specifically, if we let $z=x_1+ix_2$, we can write $\esiz d\psi_n,d\psi_n\esde = \rho_n |dz + \mu_n d\bar{z}|^2$, where 
 \begin{equation}\label{cambiocon}
\rho_n = \frac{1}{4}(E_n+G_n+2\sqrt{E_n G_n - F_n^2}), \hspace{0.5cm} \mu_n =\frac{E_n -G_n +2i F_n}{4 \rho_n}.
 \end{equation} 
In this way, $||\mu_n||_{\8}<1$, and the change of coordinates $(x_1,x_2)\mapsto (u^n,v^n)\in \D$ into conformal (isothermal) coordinates $(u^n,v^n)$ for $\esiz d\psi_n,d\psi_n\esde$ is given by a homeomorphic solution $f^n = u^n+i v^n :\D\flecha \D$ to the Beltrami equation $$f^n_{\bar{z}} = \mu_n f^n_z.$$ By classical theory of quasiconformal mappings, this solution $f^n$ exists and is unique if we prescribe that $0,1$ are fixed points, i.e. that $f^n(0)=0$ and $f^n(1)=1$; see e.g. \cite[Theorem 6]{AB}. A similar discussion provides conformal parameters for the limit immersion $\tilde{\psi}$, and it is clear by the previous convergence properties and \eqref{cambiocon} that the Beltrami coefficients $\{\mu_n\}$ converge uniformly on compact sets to the Beltrami coefficient $\tilde{\mu}$ associated to $\esiz d\tilde{\psi},d\tilde{\psi}\esde $ in the previous process. By the regular dependence of $f^n$ with respect to $\mu_n$ proved in \cite{AB}, and the convergence properties in our situation, we conclude that the mappings $\{f^n\}_n$ converge to the homeomorphic solution $\tilde{f}$ of the Beltrami equation that provides conformal parameters for $\esiz d\tilde{\psi},d\tilde{\psi}\esde $ and fixes $0$ and $1$.

To sum up: this discussion shows that we can view the convergence of the surfaces $\cU_n$ as a convergence of \emph{conformal} immersions $\psi_n:\D\flecha \R^3$ to some limit conformal immersion $\tilde{\psi}:\D\flecha \R^3$ in the $C^{1,\alfa}$-norm. In particular, the Gauss maps $N_n:\D\flecha \S^2$ of the $\psi_n$ converge to the Gauss map $\tilde{N}:\D\flecha \S^2$ of $\tilde{\psi}$.

By the discussion in Section \ref{sec:cuasi}, the stereographically projected maps $g_n:=\pi \circ N_n$ satisfy that either $g_n$ or $g_n^*(z):=g_n(\bar{z})$ is a quasiregular mapping. By Lemma \ref{lemcuasii}, these maps converge, up to a subsequence, to a quasiregular mapping $\tilde{g}$, maybe constant. This mapping is actually given by $\tilde{g}=g$ or $\tilde{g}=g^*$, where $g=\pi\circ \tilde{N}$ and
 $g^*(z):=g(\bar{z})$, by the previous discussion. Since $\nu(p_n)\to 0$ and $\nu> 0$, we have that $\esiz \tilde{N},e_3\esde \geq 0$ everywhere, and $\esiz \tilde{N},e_3\esde =0$ at $q$; thus, $\tilde{N}$ cannot be open. Since non-constant quasiregular mappings are open by Stoilow's factorization theorem, $\tilde{N}$ must actually be constant. Thus, this limit surface $\tilde{\psi}$ is a piece of a plane; more specifically, it is the disk $B_{\delta}$ of the vertical plane $\Pi_0\subset \R^3$ that passes through $q$ with unit normal $N_0$.

We prove next that $N_0$ is orthogonal to $\parc D$ at $q$. Indeed, otherwise, there would exist points of the disk $B_{\delta}\subset \Pi_0$ that lie in the horizontal disk $D$. Obviously, the function $u$ would be well defined around any such point, since it is well defined in $D$. Take a point $(a_0,0)$ in $B_{\delta}\cap D$. By the already proved convergence of the disks $\cU_n\subset \Sigma$ to $B_{\delta}$, there would exist points in $D$ converging to $(a_0,0)$ at which the gradient of $u$ blows up, since $B_{\delta}$ is vertical. This is not possible, since $u$ is well defined at $(a_0,0)$. Therefore, $N_0=\pm \eta(0)$, where $\eta(s)$ was defined in \eqref{halftu}, and so, $\Pi_0=\Gamma\times \R$.

Take now a small segment $\beta$ contained in $D$, that ends at $q$. Since the tangent planes of $\cV$ become vertical as we approach $q$ through $\beta$, it is clear that the restriction of $u$ to $\beta$ is monotonic, for values sufficiently close to $q$. Thus, it has a limit, which cannot be a finite number. Indeed, if it was a finite number, then by monotonicity, the curve $(\beta,u(\beta))$ would have finite length in $\Sigma$. So, by completeness of $\Sigma$, we would have a point in $\Sigma$ with a vertical tangent plane, and this contradicts that $\Sigma$ is a multigraph. In other words, the restriction of $u$ to any such segment $\beta$ diverges to $+\8$ or to $-\8$.

Consider the normal segment in $\R^2$ orthogonal to $\parc D$ at $q$, given by $\beta(t):=q\pm t N_0$, where the sign is chosen so that $\beta(t)$ lies in $D$ for $0<t< t_0$ with $t_0$ small enough. Define the open set 
\begin{equation*}
\Sigma_{t_0}=\bigcup_{0<t<t_0}{\cal W}_{(\beta(t),u(\beta(t)))} \subset \Sigma,
\end{equation*}
which is a connected neighborhood of the curve $\{(\beta(t),u(\beta(t))):\ 0<t< t_0\}\subset\Sigma$; here, one should recall the definition of $\cW_p$ at the beginning of the present proof of Assertion \ref{ass:2}.  By the convergence of the disks $\cU_n\subset \Sigma$ to $B_{\delta}\subset \Gamma\times \R$, it is clear that 
the projection of $\Sigma_{t_0}$ into $\R^2$ contains a one-sided tubular domain $\cN(0,\ep_0)$ as in \eqref{halftu}. See Figure \ref{fig:appendix}.

 \begin{figure}[htbp]
    \centering
        \includegraphics[width=.5\textwidth]{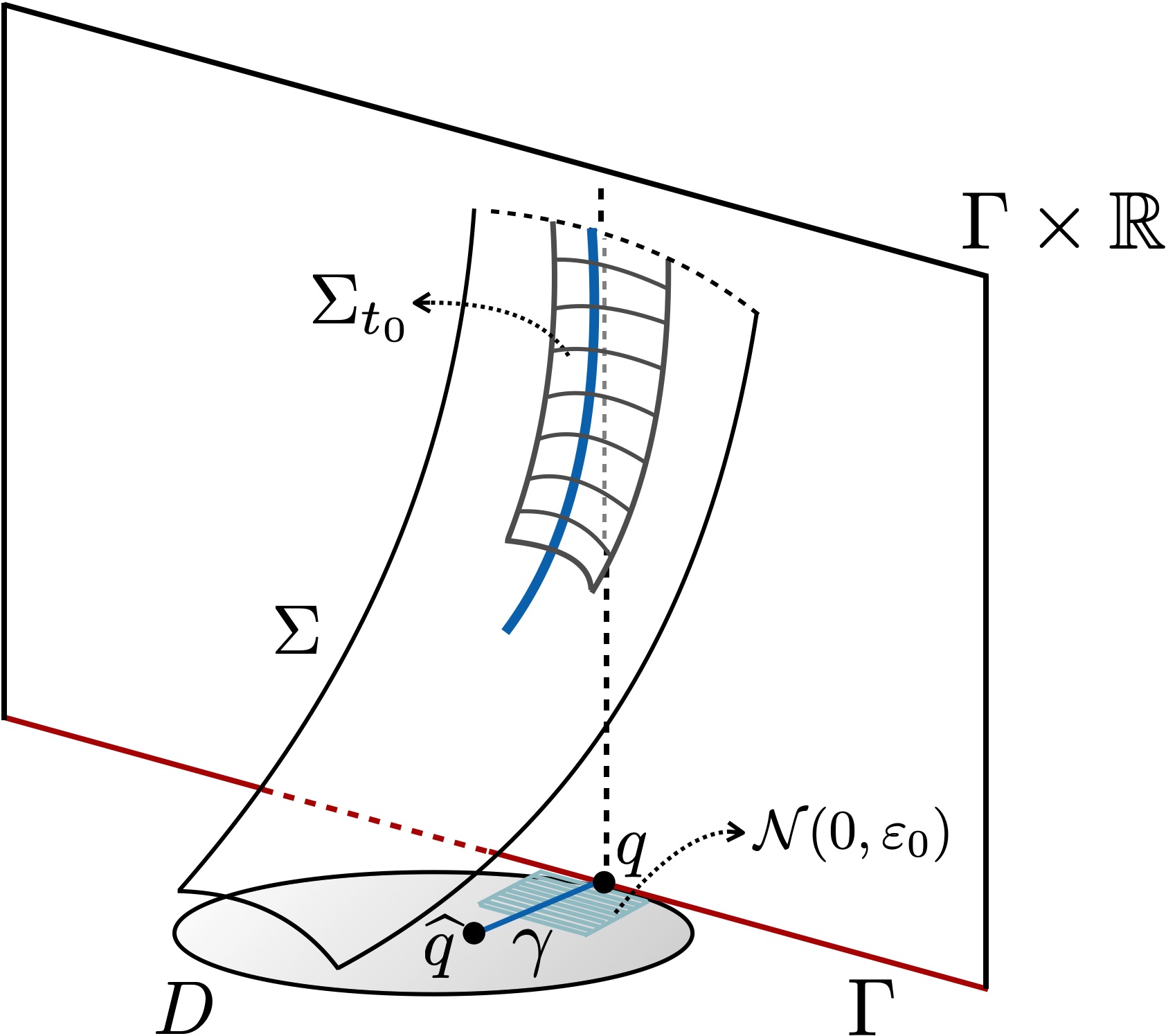}
    \caption{The domain $\Sigma_{t_0}$ in $\Sigma$, and its projection.}
    \label{fig:appendix}
\end{figure}

For each $s\in [-\delta,\delta ]$, denote by $P(s)$ the vertical plane that is normal to $\Gamma$ at the point $\Gamma(s)$. Observe that $P(s)$ intersects $\Sigma_{t_0}$ transversely for all $s\in[-\delta,\delta]$. Note that all points in $\Sigma_{t_0}\cap P(0)$ belong to the curve $(\beta(t),u(\beta(t)))$, and so $\Sigma_{t_0}\cap P(0)$ is a connected graphical curve that does not intersect $\Gamma\times \R$. In the same way, by transversality and the definition of $\Sigma_{t_0}$, there is some $t_0>0$ and some $\ep>0$ such that for each $s\in [-\delta,\delta]$, $\Sigma_{t_0}\cap P(s)$ is exactly one curve, which is a graph over a segment in $\R^2$ of the form $\Gamma(s) + t \eta(s)$. Here, $\eta(s)$ is the unit normal of $\Gamma(s)$ in \eqref{halftu} and $t$ varies in an interval $I_s$ that contains $(0,\ep)$.

All these properties let us conclude that $\Sigma_{t_0}$ is a graph when we restrict to the points of $\Sigma_{t_0}$ that project onto the one-sided tubular domain $\cN(0,\ep)$, for the value $\ep>0$ above. Thus, $u$ can be extended as a graph to $D\cup \cN(0,\ep)$.

Let us also point out that (for $t_0>0$ small enough) $\Sigma_{t_0}$ does not intersect $\Gamma\times \R$. Indeed, otherwise there would exist a smallest (in absolute value) $s_1$ such that $\Sigma_{t_0}\cap P(s_1)$ intersects $\Gamma\times \R$. But $\Sigma_{t_0}\cap P(0)$ does not intersect $\Gamma\times \R$, as explained above, so $s_1>0$. By continuity we would have that $\Sigma_{t_0}\cap P(s_1)$ intersects $\Gamma\times \R$ but it does not \emph{cross} it. Hence,  there would exist a point in $\Sigma_{t_0}\cap P(s_1)$ where the tangent plane to $\Sigma$ is vertical, and this is not possible since $\Sigma$ is a multigraph.

The fact that $\Sigma_{t_0}$ does not intersect $\Gamma\times \R$ together with the previously proved asymptotic convergence of the curves $\Sigma_{t_0}\cap P(s)$ to $\Gamma \times \R$  give the asymptotic behavior of the statement of Assertion \ref{ass:2}. This completes the proof of Assertion \ref{ass:2}.
\end{proof}

Using the notation of Assertion \ref{ass:2}, take $q'\in \cN(0,\ep)\cap D$ given by $q'=\Gamma(\delta/2)+\tau_0\eta (\delta/2)$ for $\tau_0\in (0,\ep)$ small enough, and let $p'=(q',u(q'))\in \Sigma$. Then, we can apply again the extension process in Assertion \ref{ass:2}, but this time starting with $p'$ instead of $p=(\hat{q},u(\hat{q}))$. In this way, we see that $u$ can also be extended to a one-sided tubular domain $\cN(\delta/2,\ep_1)$, i.e., to
 \begin{equation}\label{larges}
\{\Gamma(s) + \tau\, \eta(s) : s\in [-\delta/2,3\delta/2], \tau\in (0,\ep_1)\}\subset \R^2
 \end{equation}for some $\ep_1>0$, and in particular to the union $\cN(0,\ep)\cup \cN(\delta/2,\ep_1)$. By repeating this process, we conclude that $u$ can be extended to a union of domains $\cN(k\delta/2,\ep_k)$ for $k\in \Z$, and so, to a one-sided tubular neighborhood $\cN_{\Gamma}$ of the straight line $\Gamma\subset \R^2$. Moreover $u(x,y)\to \pm \8$ as $(x,y)\to \Gamma$. See Figure \ref{fig:tri}, left.
 
 We claim next that $u$ can actually be extended to the slab $\cS_{\Gamma}$ of $\R^2$ contained between $\Gamma$ and the line parallel to $\Gamma$ that passes through $\hat{q}$. To see this, take for each $\theta\in (-\pi/2,\pi/2)$ the open segment $\sigma_{\theta}$ that joints $\hat{q}$ with $\Gamma$ and that makes an angle $\theta$ with the segment $\sigma_0$ that joints $\hat{q}$ and $q$. Note that $u$ is well defined on $\sigma_{\theta}$ for sufficiently small values of $\theta$. Let $\theta_0>0$ be the supremum of the values for which $u$ can be extended to the open triangular region (see Figure \ref{fig:tri}, left) $$\Omega_0:= \{\cup \sigma_{\theta} : |\theta|<\theta_0\}.$$
 \begin{figure}[htbp]
    \centering
    \includegraphics[width=.8\textwidth]{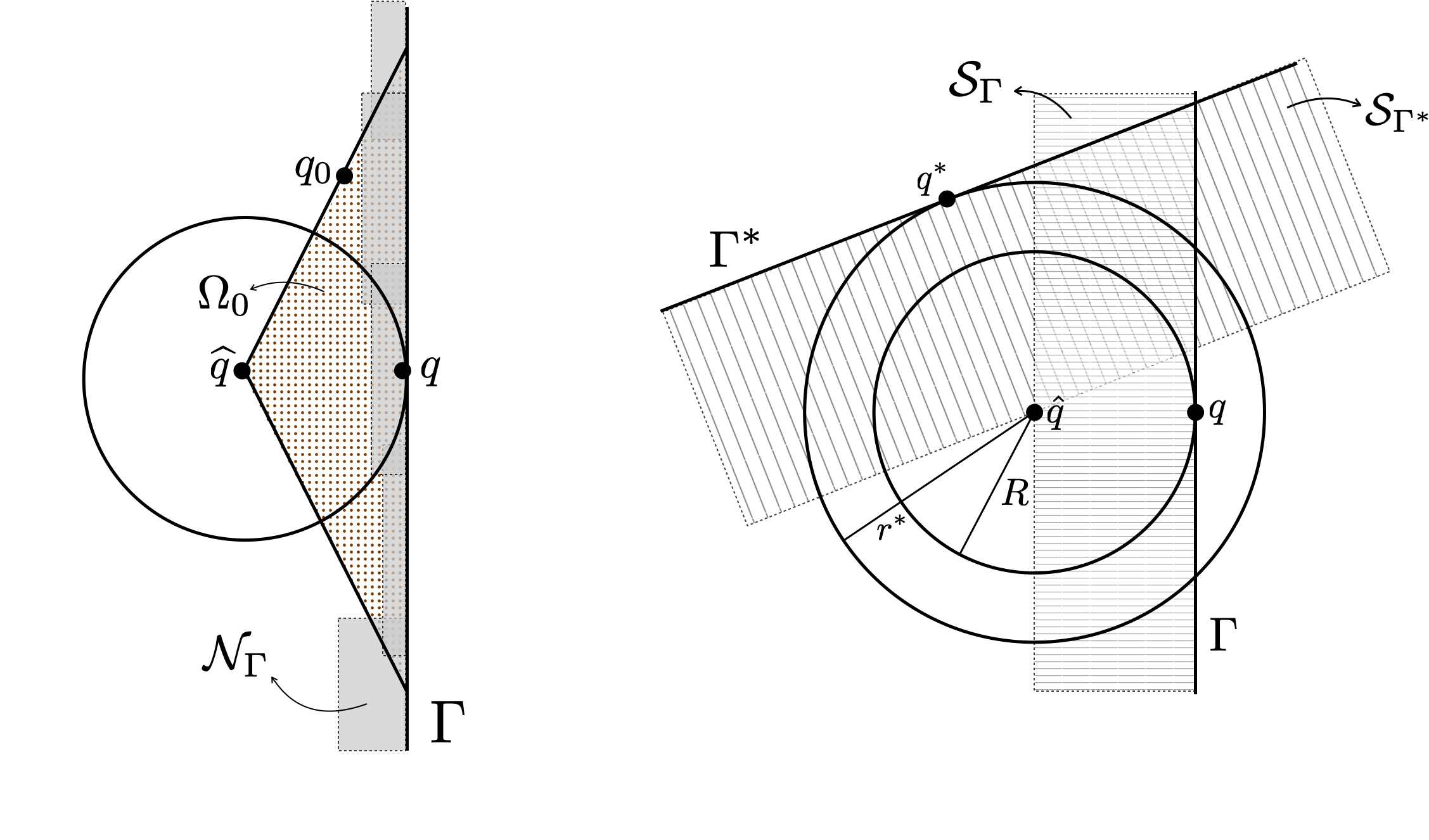} 
    \caption{Left: the one-sided tubular neighborhood $\cN_{\Gamma}$ of the line $\Gamma$, and the triangular region $\Omega_0$. Right: the union of the two strips $S_{\Gamma}$ and $S_{\Gamma^*}$.}
    \label{fig:tri}
\end{figure}
If $\theta_0\neq \pi/2$, there exists $q_0\in \sigma_{\theta_0}\subset \parc \Omega_0$ such that $u$ cannot be extended around $q_0$. By the previous process, we see that $u\to \pm \8$ along the whole segment $\sigma_{\theta_0}$. But this is a contradiction, since the open segment $\sigma_{\theta_0}$ intersects the one-sided tubular set $\cN_{\Gamma}$, where $u$ is well defined.  Therefore, $\theta_0=\pi/2$, and this means that $u$ can be extended to the slab $\cS_{\Gamma}$.

Next, let $P^+$ denote the open half-plane of $\R^2$ with $\parc P^+=\Gamma$ and $\hat{q}\in P^+$. Assume that $u$, which is at first defined on an open subset of $P^+$, cannot be globally extended to $P^+$. Then, there exists some $r^*\geq R$ and some $q^* \in P^+\cap \parc D(\hat{q},r^*)$ such that $u$ is well defined on $P^+\cap D(\hat{q},r^*)$ but cannot be smoothly extended around $q^*$. Then, by repeating the previous argument, but this time with respect to $q^*$ (instead of $q$), we deduce that $u$ is well defined in the slab $\cS_{\Gamma^*}$ between the line $\Gamma^*$ that is tangent to $\parc D(\hat{q},r^*)$ at $q^*$, and the line parallel to $\Gamma^*$ that passes through $\hat{q}$. Moreover $u\to \pm \8$ as we approach $\Gamma^*$. See Figure \ref{fig:tri}, right.

Observe here that $\Gamma^*$ needs to be parallel to $\Gamma$. Indeed, otherwise the union of the slabs $\cS_{\Gamma}\cup \cS_{\Gamma^*}$ is a simply connected domain in $\R^2$ where $u$ is globally well defined, but this is impossible since $u\to \pm \8$ as we approach $\Gamma^*$, which actually intersects $\cS_{\Gamma}$.

Then, it clearly follows from this argument that $u$ can be extended to a domain $\Omega\subset \R^2$ that is either a half-plane or a strip between two parallel lines, and so that $u(x,y)\to \pm \8$ as $(x,y)\to \parc \Omega$. In particular, the complete multigraph $\Sigma$ is actually the graph $z=u(x,y)$ over $\Omega$.

Finally, let us recall that the Gauss map $N:\Sigma\flecha \S^2$ of $\Sigma$ is quasiconformal. By Theorem 3.1 in \cite{Sim} (see also equation (3.26) in \cite{Sim}), and since $\Sigma$ is a graph, there exist constants $c'>0$ and $\alfa\in (0,1)$ such that 
\begin{equation}\label{simeq}
||N(x)-N(\bar{x})|| \leq c' \left(\frac{||x-\bar{x}||}{\varrho}\right)^{\alfa},
\end{equation}
for all $x,\bar{x}\in \Sigma$ that are at an extrinsic distance at most $\varrho/2$ from some arbitrary point $x_0\in \Sigma$; here, $\varrho>0$ and $||\cdot ||$ denotes the Euclidean distance in $\R^3$. Since $u\to \pm \8$ as $(x,y)\to \parc \Omega$, we deduce that $\Sigma$ is proper; hence, by letting $\varrho\to \8$ in \eqref{simeq} we conclude that $N$ must be constant, i.e., $\Sigma$ must be a plane, a contradiction (recall that $\Omega$ is not $\R^2$). This completes the proof of Theorem \ref{alfa}.
 \end{proof}

\section{Weingarten multigraphs with bounded second fundamental form}\label{sec:3}

The present section is devoted to the proof of the following theorem:
\begin{theorem}\label{b2}
Planes are the only complete elliptic Weingarten multigraphs with bounded second fundamental form.
\end{theorem}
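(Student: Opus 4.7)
The plan is to reduce Theorem \ref{b2} to an application of Theorem \ref{alfa}. Since $\Sigma$ is a complete multigraph with bounded second fundamental form by hypothesis, the only missing ingredient for invoking Theorem \ref{alfa} is quasiconformality of the Gauss map of $\Sigma$. I would split the argument according to whether the Weingarten equation $\kappa_2=f(\kappa_1)$ associated to $\Sigma$ is of minimal type or not.

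In the minimal type case, where $f(0)=0$, the proof is immediate. Lemma \ref{lem:cuw2} produces a constant $\gamma\leq-1$ for which the quasiconformality inequality \eqref{cuasiconfo} holds everywhere on $\Sigma$, so the Gauss map is quasiconformal and Theorem \ref{alfa} forces $\Sigma$ to be a plane. This is consistent, since planes belong to $\cW_g$ precisely when the equation is of minimal type.

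The main obstacle is the non-minimal case, where the umbilical constant $\alpha\neq 0$ and planes are not members of $\cW_g$; the content of the theorem is then a non-existence statement. The wedge argument of Lemma \ref{lem:cuw2} breaks down because the compact arc of $\{(x,f(x))\}$ traced by the curvature diagram may cross a coordinate axis. My strategy would be a maximum principle comparison with the round sphere $S$ of principal curvatures $(\alpha,\alpha)$, which is always an element of $\cW_g$. By completeness together with the bound on $|\sigma|$, $\Sigma$ is proper; I would slide a translate of $S$ in from infinity along a direction transverse to $\Sigma$, produce a first interior tangency with matching orientation, and apply the strong maximum principle for the linearized elliptic operator $\cL_g$ from \eqref{operator} to conclude that $\Sigma$ coincides locally with this sphere. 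Unique continuation for the $C^2$ elliptic Weingarten equation then forces $\Sigma$ to equal the whole sphere globally, contradicting that $\Sigma$ is a non-compact multigraph. The delicate step, which I expect to be the main technical difficulty, is the sliding argument itself: $\Sigma$ is only immersed rather than embedded, so $\R^3\setminus\Sigma$ need not separate cleanly, and producing a first interior tangency of $S$ with $\Sigma$ with the correct orientation will require working in the uniform graphical neighborhoods provided by the bound on the second fundamental form (in the spirit of Remark \ref{2ff}), with suitably chosen base points along $\Sigma$.
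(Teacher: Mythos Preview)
Your treatment of the minimal-type case ($\alpha=0$) is correct and matches the paper exactly: Lemma~\ref{lem:cuw2} gives quasiconformality, and Theorem~\ref{alfa} finishes.

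For the non-minimal case, however, your sphere-sliding proposal has a genuine gap. First, the claim that completeness together with bounded $|\sigma|$ forces properness is not justified, and is false for general immersed surfaces (e.g.\ $\gamma\times\R$ for a bounded-curvature planar spiral $\gamma$ accumulating on a circle); whether it holds for multigraphs is not obvious, and you offer no argument. More seriously, even granting properness, a global sliding argument does not obviously produce a first interior tangency: a complete multigraph satisfying \eqref{weq2} with $\alpha>0$ could, for all your argument shows, be a graph over a bounded disk with $u\to+\infty$ at the boundary, in which case no translate of the sphere $S_\alpha$ is ever disjoint from $\Sigma$, so there is no starting position for the slide. The paper does use comparison with $S_\alpha$, but only \emph{locally}, to bound the radius of the maximal graphical disk about an arbitrary point (Assertion~\ref{ass:1}); this alone does not yield non-existence, and your remark about ``working in the uniform graphical neighborhoods'' does not indicate how to go further.

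The paper's actual argument for $f(0)\neq 0$ (Theorem~\ref{ass:4}) is substantially more delicate and proceeds along a different line. After the local sphere bound, one shows that near the boundary of the maximal graphical disk the surface becomes $C^2$-asymptotic to a vertical Weingarten cylinder $\Gamma\times\R$ of radius $1/f(0)$ (the analogue of Assertion~\ref{ass:2}, with the limit plane replaced by a cylinder, using Lemma~\ref{numulti} in place of the quasiregular-map argument). One then lifts to the universal cover of $\Gamma\times\R$, extends the surface as a graph over a half-infinite strip in cylindrical coordinates, and constructs an explicit compactly-supported normal perturbation $\phi$ of the cylinder for which the linearized operator \eqref{operator} satisfies $\cL_g[\phi]>0$. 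Sliding this perturbed piece against the lifted surface and invoking the comparison principle for \eqref{weq2} yields the contradiction. The role of $\cL_g$ is thus quite different from what you envisioned: it is used to manufacture a barrier from the cylinder, not to compare with a sphere.
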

\begin{proof}
Let $\Sigma$ be a complete elliptic Weingarten multigraph with bounded second fundamental form. Let $f\in C^{2}(I_f)$, $I_f\subset \R$, be the function that defines the Weingarten relation \eqref{wein2} satisfied by $\Sigma$. 

We will start by noting that $f$ is defined at $0$, i.e., that $0\in I_f$. Indeed, otherwise we would have $I_f\subset (0,\8)$, by our convention that the umbilicity constant $\alfa$ of $f$ satisfies $\alfa \geq 0$ (see Section \ref{sec:21}). Then, by the properties of $f$ described in Section \ref{sec:21} we deduce that $I_f=(a,\8)$ for some $a\geq 0$. By the symmetry condition $f\circ f={\rm Id}$, we see that
both principal curvatures of $\Sigma$ are positive. Since $\Sigma$ has bounded second fundamental form, we easily see from there that its Gaussian curvature satisfies $K\geq c>0$ for some constant $c$. Thus, $\Sigma$ is compact, a contradiction with $N(\Sigma)\subset \S_+^2$.

So, there exists $f(0)$. If $f(0)=0$, $\Sigma$ has quasiconformal Gauss map (see Lemma \ref{lem:cuw2}). By Theorem \ref{alfa}, $\Sigma$ is a plane. If $f(0)\neq 0$, $\Sigma$ cannot exist, by Theorem \ref{ass:4} below. Thus, Theorem \ref{b2} is proved.
\end{proof}

So, it remains to prove:

\begin{theorem}\label{ass:4}
There are no complete elliptic Weingarten multigraphs with $f(0)\neq 0$ and bounded second fundamental form.
\end{theorem}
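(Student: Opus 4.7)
The plan is to argue by contradiction, adapting the extension procedure of Assertion \ref{ass:2} to the non-quasiconformal setting $f(0)\neq 0$. Assume such a $\Sigma$ exists; without loss of generality, $N(\Sigma)\subset \S_+^2$ and $f(0)>0$, so that vertical circular cylinders of radius $r_0:=1/f(0)$ also satisfy \eqref{wein2}. Being a multigraph, $\Sigma$ is locally a graph $z=u(x,y)$; fix $p\in\Sigma$ and let $R>0$ be the supremum of radii such that a neighborhood of $p$ projects as a graph over $D=D(\hat q,R)$. For the moment assume $R<\infty$, pick $q\in\partial D$ where $u$ fails to extend, and take $q_n\to q$ in $D$ with $p_n=(q_n,u(q_n))\in\Sigma$.

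As in Assertion \ref{ass:2}, one first argues that $\nu(p_n)\to 0$, and via the conformal $C^{1,\alpha}$-compactness of translated patches around the $p_n$ (using the uniform bound on the second fundamental form), a subsequence converges to a $C^{1,\alpha}$-immersion $\tilde\psi$ whose angle function vanishes at the basepoint. The crucial new input is that this limit is itself an elliptic Weingarten surface for the same $f$ (the relation \eqref{wein2} passes to the $C^{1,\alpha}$-limit), and by Lemma \ref{numulti} it is a piece of a vertical plane or a vertical cylinder; since $f(0)\neq 0$ rules out planes, $\tilde\psi$ parametrizes a piece of a vertical cylinder of radius $r_0$. Its horizontal projection is an arc of a circle $\Gamma\subset\R^2$ of radius $r_0$ through $q$ and tangent to $\partial D$ at $q$. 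Running the remainder of Assertion \ref{ass:2} with the tangent circle $\Gamma$ replacing the tangent line, $u$ extends to a one-sided tubular neighborhood of $\Gamma$ with $u\to\pm\infty$ on $\Gamma$, and iterating the procedure around $\Gamma$ yields such an extension on a full one-sided annular neighborhood.

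The contradiction will come from continuing the iteration into the $\hat q$-side of $\Gamma$: either the extension produces new circular obstructions of radius $r_0$ bounding a region $\Omega\subset\R^2$ over which $\Sigma$ is a graph with $u\to\pm\infty$ on $\partial\Omega$, or the extension eventually produces an entire graph. The final step is a comparison argument: the vertical Weingarten cylinder $C$ over $\Gamma$ satisfies \eqref{wein2} for the same $f$, and the asymptotic structure of $\Sigma$ near $\Gamma$ realizes a tangential contact of $\Sigma$ with $C$ at infinity. The strong maximum principle for \eqref{eedp}--\eqref{haka}, adapted to this at-infinity tangency via the linearized operator $\cL_g$ of Section \ref{sec:2}, then forces $\Sigma$ to contain a piece of $C$, contradicting $\nu>0$ on $\Sigma$. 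The case $R=\infty$ (an entire graph) is handled by the same blow-down analysis at a sequence of points $p_n\to\infty$ on $\Sigma$: one again produces a vertical cylinder limit from Lemma \ref{numulti}, against the multigraph property.

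The hardest part of this plan is the maximum-principle-at-infinity step: rigorously establishing that the tangential contact between $\Sigma$ and the asymptotic cylinder $C$ along $\Gamma$ forces local coincidence of the two surfaces. This requires a careful boundary behaviour analysis for the fully nonlinear PDE \eqref{eedp}--\eqref{haka} near the blow-up locus $\Gamma$, in the spirit of the comparison arguments of \cite{HRS} and \cite{GMT} for CMC surfaces, but carried out here for the general Weingarten operator through its linearization $\cL_g$.
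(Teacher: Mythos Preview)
Your outline tracks the paper's proof up through the extension step near $q$, but there are two genuine gaps.

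First, a technical point: you pass the Weingarten relation \eqref{wein2} to a $C^{1,\alpha}$-limit of immersions in order to invoke Lemma \ref{numulti} on the limit. But \eqref{wein2} is a condition on principal curvatures, hence on second derivatives, and $C^{1,\alpha}$-convergence does not control these. The paper avoids this by observing that the graphing functions $v_n$ solve the fully nonlinear elliptic PDE \eqref{eedp}, have uniformly bounded $C^2$-norm (from the second fundamental form bound), and hence satisfy Nirenberg's interior $C^{2,\alpha}$-estimate; this gives $C^2$-convergence on compact sets, so the limit genuinely satisfies \eqref{wein2} and Lemma \ref{numulti} applies. Separately, you leave $R<\infty$ as an assumption ``for the moment''; the paper dispatches this in one line by comparison with the sphere of radius $1/\alpha$ (Assertion \ref{ass:1}), which is much cleaner than any blow-down.

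Second, and more seriously, your contradiction step --- a ``maximum-principle-at-infinity'' forcing $\Sigma$ to coincide with the asymptotic cylinder $C$ --- is not the route the paper takes, and you correctly flag it as the hardest part. The paper circumvents this entirely via a compact perturbation argument of Espinar--Rosenberg type. One lifts to the universal cover $S_0$ of $\Gamma\times\R$ (this also handles the multivaluedness issue in your iteration around the circle $\Gamma$, which you gloss over), so that a piece of $\Sigma$ becomes a graph $M$ over a half-plane in $S_0$, asymptotic to $S_0$ as $t\to -\infty$. Then one computes the linearized Weingarten operator $\cL_g$ on $S_0$ explicitly: it has constant coefficients with strictly positive zero-order term, so the test function $\phi(s,t)=\cos(\pi s/2L)\cos(\pi t/2r)$ on a large box $\Omega_0$ satisfies $\cL_g[\phi]>0$. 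The normal variation $S_0(\tau)$ for small $\tau$ is then a compact strict sub/supersolution of the Weingarten equation with boundary on $S_0$; sliding it in the $t$-direction produces a first interior contact with $M$, contradicting the comparison principle for \eqref{eedp}. This replaces your asymptotic tangency argument with a finite, interior maximum-principle application, and is the key idea that makes the proof go through.
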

\begin{proof}
Let $\cW_f$ denote the class of all immersed oriented surfaces in $\R^3$ that satisfy \eqref{wein2} for our choice of $f$, with $f(0)\neq 0$. By ellipticity, surfaces in $\cW_f$ satisfy the maximum principle. 

We start similarly to the proof of Theorem \ref{alfa}. Arguing by contradiction, let $\Sigma$ be a complete elliptic Weingarten multigraph with bounded second fundamental form and $f(0)\neq 0$. Note that we actually have $f(0)>0$ (by monotonicity, since $\alfa> 0$ in this case; recall that $\alfa>0$ is the umbilical constant associated to $f$, defined by $f(\alfa)=\alfa$). Up to a Euclidean change of coordinates, we assume that $N(\Sigma)$ is a subset of $\S_+^2$, and so the angle function $\nu$ of $\Sigma$ is positive.

Let $p$ be an arbitrary point $p\in \Sigma$, and let $R>0$ be the largest value for which an open neighborhood $\cV\subset \Sigma$ of $p$ can be seen as a graph $z=u(x,y)$ over $D=D(\hat{q},R)$, where $\hat{q}=\pi(p)$, with $\pi(x,y,z):=(x,y)$. See Figure \ref{fig:dicur}. 

\begin{assertion}\label{ass:1}
In the previous conditions, $R\leq 1/\alfa <\8$.
\end{assertion}
\begin{proof}
Since $\alfa>0$, any sphere $S_{\alfa}$ of radius $1/\alfa$ lies in the Weingarten class $\cW_f$ for its inner orientation. In case $R>1/\alfa$, we could place a sphere $S_{\alfa}$ above the graph $\cV$, and then move it downwards until reaching a first contact at an interior point of both surfaces. This is a contradiction with the maximum principle for surfaces in $\cW_f$. This proves Assertion \ref{ass:1}.
\end{proof}

Let us fix next some notation for the rest of the proof of Theorem \ref{b2}. One should compare it with the related notation in the proof of Theorem \ref{alfa}.

By Assertion \ref{ass:1} there exists some $q\in \parc D$ for which $u$ cannot be extended to a neighborhood of $q$. We let $C_1:=\Gamma_1\times \R$, $C_2:=\Gamma_2\times \R$, denote the two vertical cylinders in $\R^3$ of radius $1/f(0)$ that pass through $q$, and whose unit normals at $q$ are orthogonal to $\parc D$. Note that $C_i \in \cW_f$, $i=1,2$, for their inner orientation. We will let $\Gamma_i(s)$ be an arclength parametrization of the circle $\Gamma_i$, with $\Gamma_i(0)=q$.

Given $s_0\in \R,\ep>0$, we define for each $i=1,2$, in analogy with \eqref{halftu},  the open \emph{one-sided tubular} set 
\begin{equation}\label{halftub}
\cN_i(s_0,\ep):= \{\Gamma_i(s) + \tau\, \eta_i(s) : |s-s_0|<\delta, \tau \in (0,\ep) \}\subset \R^2,
\end{equation}
 where, again, $\eta_i(s)$ denotes the unit normal of $\Gamma_i(s)$ that, at $q$, points in the direction $\hat{q}-q$, where $\hat{q}$ is the center of $D$. See Figure \ref{fig:dentrofuera}.
 
\begin{figure}[htbp]
    \centering
    \includegraphics[width=.7\textwidth]{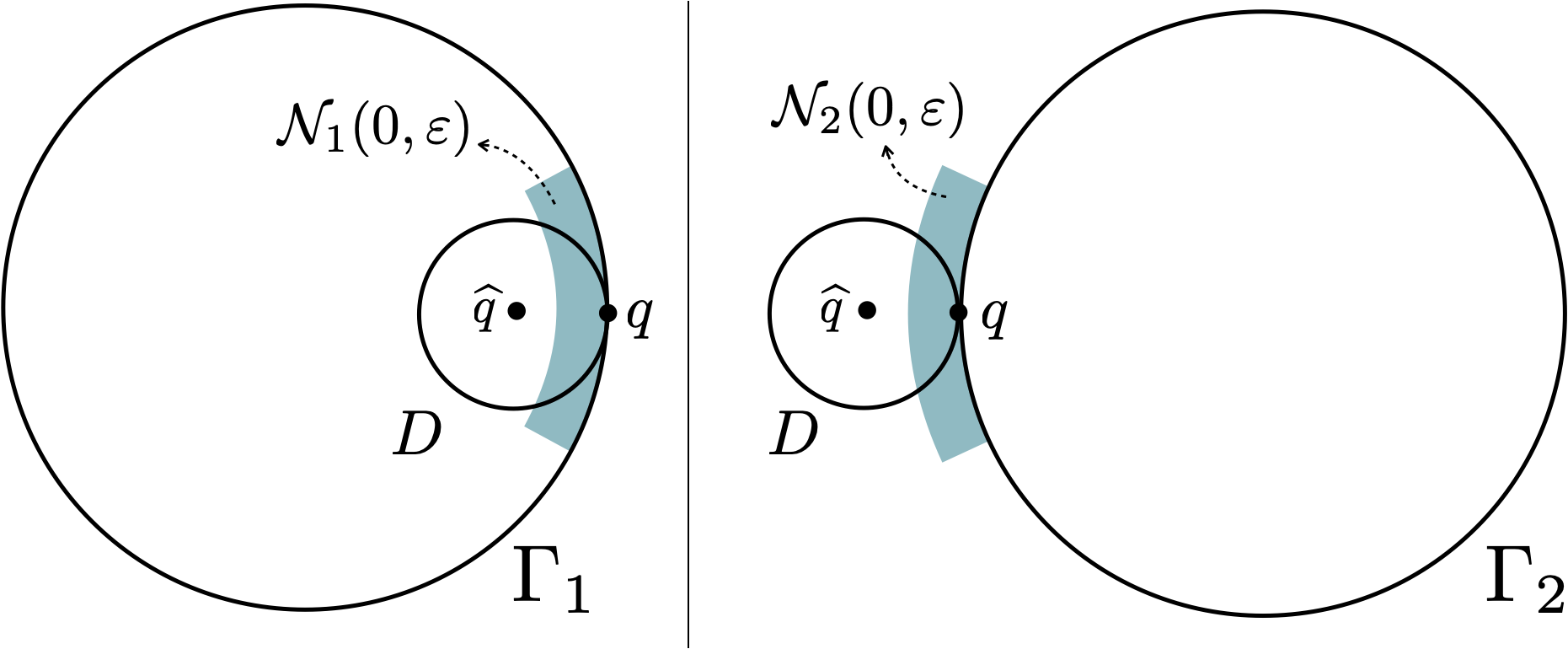}
    \caption{The two circles $\Gamma_i$, $i=1,2$, that are tangent to $\parc D$ at $q$, and their associated one-sided tubular domains $\cN_i(0,\ep)$ at $q$.}
    \label{fig:dentrofuera}
\end{figure}

\begin{assertion}\label{ass:2H}
In the above conditions, there exists $\ep >0$ such that $u(x,y)$ extends smoothly to $D\cup \cN_i (0,\ep)$, for some $i=1,2$. Moreover, this extension satisfies that $u(x,y)$ diverges to either $+\8$ or $-\8$ when $(x,y)\in \cN_i (0,\ep)$ approaches $\Gamma_i$.
\end{assertion}
\begin{proof}
The proof of Assertion \ref{ass:2H} follows very closely the one of Assertion \ref{ass:2}, but with a different convergence argument. Let us explain in detail how to modify the proof of Assertion \ref{ass:2} to our context.

To start, the first five paragraphs in the proof of Assertion \ref{ass:2}, including properties (1) and (2) there, are literally the same. After properties (1) and (2), we should include for our present situation the following paragraph regarding the convergence of the topological disks $\cU_n$ to a limit disk with vanishing angle function, instead of the argument in Assertion \ref{ass:2} using quasiregular mappings:

\emph{Since each $\cU_n$ satisfies the Weingarten equation \eqref{wein2}, then the graphing function $v_n$ of $\cU_n$ is a uniformly elliptic solution to the associated PDE \eqref{eedp}. Recall that the functions $v_n$ have uniformly bounded $C^2$-norm in $B_{\delta}$. Then, by Nirenberg's a priori $C^{2,\alfa}$-estimates for fully nonlinear elliptic equations in dimension two (see \cite[Theorem I]{Nir}), it follows that the family $\{v_n\}_n$ is uniformly bounded in the $C^{2,\alfa}$-norm in $B_{\delta'}$, for any $\delta'\in (0,\delta)$. From here, by the Arzela-Ascoli theorem,  
the surfaces $\cU_n$ converge (up to subsequence) in the $C^2$-norm on compact sets to some limit surface that also satisfies \eqref{wein2}. Since $\nu(p_n)\to 0$ and $\nu\geq 0$, it follows from Lemma \ref{numulti} that this limit surface is a piece of the cylinder $\Gamma\times \R$, where $\Gamma$ is the circle of radius $1/f(0)$ that passes through $q$ with interior unit normal $N_0$. More specifically, this limit surface is the geodesic disk of $\Gamma\times\R$ centered at $q$ and of radius $\delta>0$}.

Once we know this convergence, the same argument as the corresponding one in Assertion \ref{ass:2} shows that $N_0$ is orthogonal to $\parc D$ at $q$. Therefore, $\Gamma$ must be tangent to $\parc D$ at $q$, i.e., $\Gamma=\Gamma_i$ for some $i\in \{1,2\}$. From this point on, the rest of the proof of Assertion \ref{ass:2H} follows literally the corresponding proof of Assertion \ref{ass:2}. In this respect, one should bear in mind that, this time, the curve $\Gamma$ in \eqref{halftub} is a circle, not a line, and that $B_{\delta}\subset \Gamma\times \R$ should be understood as the limit geodesic disk of the topological disks $\cU_n$, and not as a disk of the vertical plane $\Pi_0$.
 \end{proof}

\begin{assertion}\label{ass:3H}
In the conditions of Assertion \ref{ass:2H}, we have that $u\to \8$ (resp. $u\to -\8$) if $D$ lies in the interior (resp. exterior) of $\Gamma_i$.
\end{assertion}
\begin{proof}
This is a direct consequence of the fact that $\Sigma$ was oriented so that its angle function is positive, and the property that $u(x,y)$ diverges to $\pm \8$ proved in Assertion \ref{ass:2H}.
\end{proof}

\begin{remark}\label{ciligraf}
We point out the following consequence of Assertion \ref{ass:2H}, for later use. Assume that we are in the conditions of Assertion \ref{ass:2H}, and in particular $f(0)> 0$. Call $\Gamma:=\Gamma_i$, and suppose, for definiteness, that $u\to -\8$. Then, by the asymptotic behavior of $u$, and taking a smaller $\ep>0$ if necessary, the graph $\cU_0\subset \Sigma$ given by $z=u(x,y)$ on $\cN_i (0,\ep)$ can be seen as a normal graph over an open set $\mathcal{C}_0$ of the limit cylinder $\Gamma\times \R$, of the form $$\mathcal{C}_0=\{(\Gamma(s),t) : |s|<\delta, t\in (-\8,t_0(s))\}\subset \Gamma\times \R,$$ where $t_0(s)$ is a continuous function on $[-\delta,\delta]$. Moreover,  $\cU_0$ lies in the exterior region of $\Gamma\times \R$ (since $u\to -\8$, see Assertion \ref{ass:3H}), and converges asymptotically to $\Gamma\times \R$ as $t\to -\8$.
\end{remark}

We continue with the proof of Theorem \ref{ass:4}, with the above notations. We will also let $\Gamma$ be the circle $\Gamma_i$ in Assertion \ref{ass:2H} (i.e., either $\Gamma_1$ or $\Gamma_2$); note that $\Gamma$ has radius $r_0=1/f(0)$.

To start, consider the graph $\cU_0\subset \Sigma$ given by $z=u(x,y)$ in the \emph{small} one-sided tubular set $\cN(0,\ep)$, defined as in \eqref{halftub}. Note that this time we cannot apply Assertion \ref{ass:2H} recursively as we did with Assertion \ref{ass:2} in \eqref{larges} to extend $u(x,y)$ to a one-sided tubular neighborhood of $\Gamma$, since now $\Gamma$ is a circle (not a straight line), and hence not simply connected. To avoid this difficulty, we will adapt to our situation a perturbation argument by Espinar and Rosenberg \cite{ER}, originally developed for the case of CMC surfaces in Riemannian product spaces $M^2\times \R$. 

First, we will suppose from now on, for definiteness, that \emph{$\cN(0,\ep)$ lies in the exterior of $\Gamma$}, as in the right picture of Figure \ref{fig:dentrofuera} (the argument is basically the same if $\cN(0,\ep)$ lies inside the circle $\Gamma$). That is, we assume that $\cU_0$ lies in the exterior of $\Gamma\times \R$.

Let $S_0$ be the universal cover of the cylinder $\Gamma\times \R$, parametrized  by
 \begin{equation}\label{unicor}
(s,t)\in \R^2 \mapsto (\Gamma(s),t) \in \Gamma\times\R.
\end{equation}

Consider the universal cover of $\R^3$ minus the axis of $\Gamma \times \R$, and choose there the natural \emph{cylindrical coordinates} $(s,t,\rho)$, so that $\rho$ gives the distance to the axis of $\Gamma\times \R$, and $(s,t)$ correspond to the parameters in \eqref{unicor}. In particular, $S_0$ corresponds to the horizontal plane $\rho=r_0$.

Then, by Remark \ref{ciligraf}, the surface $\cU_0\subset \Sigma$ lifts to a graph $\rho=v(s,t)$ over an open set of the plane $S_0$, of the form $\{(s,t) : |s|<\delta, t<t_0(s)\}$ for $t_0(s):[-\delta,\delta]\flecha \R$ continuous. Moreover, this graph lies above $S_0$ (since $\cU_0$ lies in the exterior of $\Gamma\times \R$), and converges asymptotically to $S_0$ as $t\to -\8$.

Once here, we can make an extension process with respect to the variable $s$, similar to the one that we performed in \eqref{larges}, but this time with respect to the coordinates $(s,t,\rho)$. In this way, we obtain that a certain subset of $\Sigma$ lifts to a graph $\rho=w(s,t)$ over a domain $\tilde{\Omega}\subset S_0$ of the form $\{(s,t): s\in\R, t< t_0(s)\}$, for some continuous function $t_0(s)$ on $\R$. Call $M$ to this graph. Note that $M$ lies above $S_0$ and converges asymptotically to $S_0$ as $t\to -\8$. See Figure \ref{fig:cili}.

We now make a deformation argument on the universal cover $S_0$ of $\Gamma\times\R$.

Let us parametrize $S_0$ as in \eqref{unicor}. Then, its first and second fundamental forms are $I=ds^2+dt^2$ and $II = 2H_0 ds^2.$ Note that $2H_0=\kappa_1=1/r_0$, a constant positive value, and $K=0$. If we write the Weingarten equation satisfied by $\Sigma$ (and by $S_0)$ as in \eqref{weq2}, then a computation shows that the linearized operator $\cL_g$ given by \eqref{operator} is written on $S_0$ with respect to the flat parameters $(s,t)$ by
\begin{equation}\label{opeci}
\cL_g [\phi] = A \phi_{ss} + B \phi_{tt} + C \phi,
\end{equation}
for any $\phi\in C_0^{\8} (S_0)$, where $A,B,C$ are the constants $$A=\frac{1}{2}(1-2g(H_0^2)g'(H_0^2)), \hspace{0.5cm} B=A+2H_0g'(H_0^2),  \hspace{0.5cm} C= 4AH_0^2.$$ We remark that $A,B>0$ (by ellipticity of \eqref{weq2} and thus of $\cL_g$), and so $C>0$ as well.

Let $\Omega_0\subset S_0$ be the compact domain parametrized by $(s,t)\in [-L,L]\times [-r,r]$ for some fixed arbitrary values $L,r>0$. Define next the function
\begin{equation}\label{cosinis}
\phi(s,t):= \cos\left(\frac{\pi s}{2L}\right) \cos\left(\frac{\pi t}{2r}\right).
\end{equation}
Then, $\phi$ satisfies the following properties:

\begin{enumerate}
 \item
$\phi>0$ in the interior of $\Omega_0$, and $\phi=0$ on $\parc \Omega_0$.
 \item
If $L,r$ are large enough, $\cL_g [\phi]> 0$ in the interior of $\Omega_0$.
\end{enumerate}
For the second property, simply note that, by \eqref{opeci} and \eqref{cosinis}, we have $$\cL_g[\phi]= \left(-A\left(\frac{\pi}{2L}\right)^2  - B \left(\frac{\pi}{2r}\right)^2 + C\right)\phi,$$ and that $A,B,C$ are constants with $C>0$.

Let now $S_0(\tau)$ denote the normal variation of the compact domain $\Omega_0\subset S_0$ given by \eqref{vari} with respect to the function $\phi$ in \eqref{cosinis}. Note that, for $L,r$ large enough, the operator $\cW(\tau)$ in \eqref{wt} associated to this variation satisfies $\cW'(0)>0$, by \eqref{operator} and $\cL_g[\phi]>0$. It follows then that for $\tau\in (-\epsilon,\epsilon)$ small enough, we have the following properties when we view the surfaces in the $(s,t,\rho)$ coordinates:

\begin{enumerate}
\item
$S_0(\tau)$ is a compact immersed surface, with boundary $\parc S_0(\tau)=\parc \Omega_0\subset S_0$.
 \item
If $\tau<0$ (resp. $\tau>0$), the interior of $S_0(\tau)$ lies above (resp. below) the plane $S_0$; note that this follows by \eqref{vari}, since $\phi>0$ in the interior of $\Omega_0$ and the unit normal of $S_0$ is vertical and points downwards in the $(s,t,\rho)$-coordinates.
 \item
If $H_{\tau},K_{\tau}$ denote the mean curvature and Gauss curvature of $S_0(\tau)$, and $\tau<0$ (resp. $\tau>0$), then it holds 
 \begin{equation}\label{compa}
H_{\tau} - g(H_{\tau}^2-K_{\tau})<0, \hspace{0.5cm} \text{(resp. $>0$).} 
 \end{equation}
This follows since $\cW(0)=0$ and $\cW'(0)>0$.
\end{enumerate}

We next make a comparison argument between $S_0(\tau)$ and the graph $M$ defined above, with respect to the coordinates $(s,t,\rho)$. See Figure \ref{fig:cili}.

 \begin{figure}[htbp]
    \centering
    \includegraphics[width=.55\textwidth]{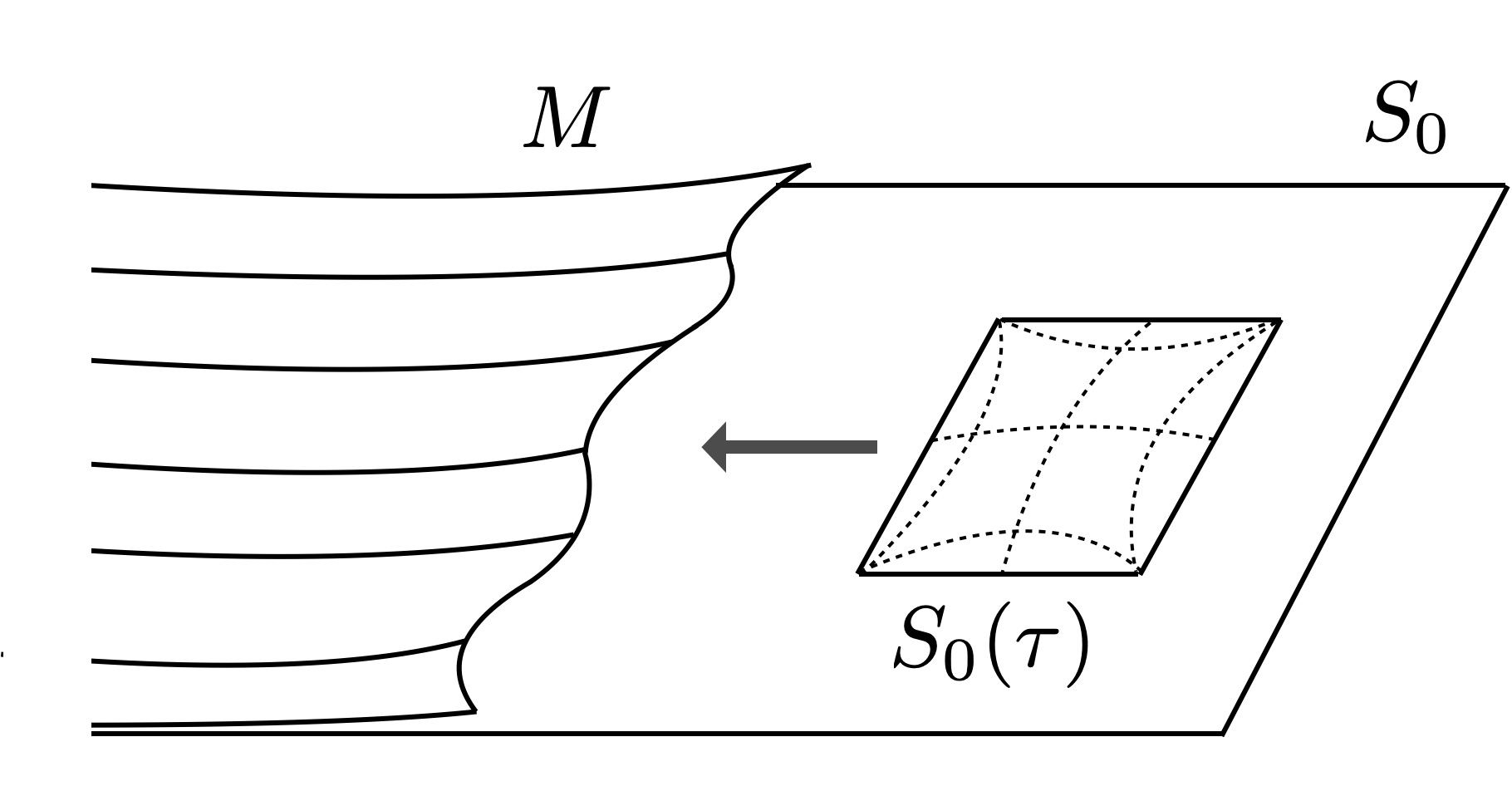}
    \caption{Sliding process of the graph $S_0(\tau)$ over the plane $S_0$, as we make $t$ decrease to $-\8$ from an initial disjoint possition from $M$.}
    \label{fig:cili}
\end{figure}

Note that the boundary $\parc M$ is at a positive distance from the plane $S_0$ when we restrict to the strip of $S_0$ given by $\{(s,t): |s|\leq L\}$. Thus, taking $\tau<0$ sufficiently close to $0$, we may assume that the maximum height of $S_0(\tau)$ over $S_0$ is smaller than this distance, and so all translations of $S_0(\tau)$ in the $t$-direction are disjoint from $\parc M$. Note that both $M$ and the interior of $S_0(\tau)$ lie above $S_0$. Then, we can slide $S_0(\tau)$ horizontally by increasing the $t$-coordinate, until it is disjoint from $M$, and then start sliding it again but in the opposite direction (i.e., making $t$ decrease). Since $M$ converges asymptotically to $S_0$ as $t\to -\8$ and we have avoided $\parc M$ in this sliding process, it is clear that we will eventually find an interior first contact point between $M$ and $S_0(\tau)$. Around this first contact point, $S_0(\tau)$ lies below $M$ in the $(s,t,\rho)$-coordinates. That is, $S_0(\tau)$ lies on the side of $M$ to which their common unit normal points at. But now, observe that $H-g(H^2-K)=0$ on $M$ by \eqref{weq2}, and that $S_0(\tau)$ satisfies \eqref{compa} for $\tau<0$. Since the Weingarten equation \eqref{weq2} is elliptic and $S_0(\tau)$ lies \emph{below} $M$ in the previous sense, this situation contradicts the comparison principle for fully nonlinear elliptic PDEs (see e.g. Theorem 17.1 in \cite{GT}).

This contradiction finishes the proof of Assertion \ref{ass:4}. Let us point out that, in the situation where our initial graph $\cU_0\subset \Sigma$ lies \emph{inside} the cylinder $\Gamma\times\R$, the same argument applies, but now we should take $\tau>0$ so that the surfaces $M$ and $S_0(\tau)$ lie below $S_0$, and contradict again the comparison principle; for this, note the change of sign in \eqref{compa}.
\end{proof}

\section{Bernstein-type theorem in the uniformly elliptic case}\label{sec:4}

In this section we prove a curvature estimate (Theorem \ref{curves2}) that, together with Theorem \ref{b2}, classify the complete, uniformly elliptic Weingarten multigraphs:

\begin{theorem}\label{unifeth}
Planes are the only complete, uniformly elliptic Weingarten multigraphs in $\R^3$.
\end{theorem}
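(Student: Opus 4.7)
The plan is to combine a curvature estimate (the goal of Theorem~\ref{curves2}) with the already proved Theorem~\ref{b2}. Once we show that any complete, uniformly elliptic Weingarten multigraph $\Sigma$ in $\R^3$ has uniformly bounded second fundamental form, Theorem~\ref{b2} forces $\Sigma$ to be a plane, which is exactly Theorem~\ref{unifeth}. So the real work is to establish the uniform bound on $|\sigma|$.

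I would argue the curvature estimate by contradiction via a blow-up/compactness scheme in the style of Rosenberg--Souam--Toubiana \cite{RST}. Assume the estimate fails; then there exists a sequence of complete, uniformly elliptic Weingarten multigraphs $\Sigma_n$, all satisfying the same relation \eqref{weq2} with uniform ellipticity constants in \eqref{unife2}, and points $p_n\in\Sigma_n$ with $|\sigma_n(p_n)|\to\infty$. A standard point-picking lemma and rescaling by $\lambda_n:=|\sigma_n(p_n)|$ (together with the translation $p_n\mapsto 0$) produce rescaled immersions $\widetilde\Sigma_n$ with $|\widetilde\sigma_n|\le 2$ on intrinsic balls of radii $R_n\to \infty$ and $|\widetilde\sigma_n(0)|=1$. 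Rotating so that $N_n(0)=e_3$, each $\widetilde\Sigma_n$ is still a multigraph with $\widetilde\nu_n>0$.

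The decisive use of uniform ellipticity enters through Lemma~\ref{lem:cuw1}: each $\Sigma_n$, hence each rescaled $\widetilde\Sigma_n$ (since the pinching \eqref{cuasiconfo} is scale invariant), has quasiconformal Gauss map with a common constant $\gamma\le -1$. Although the Weingarten equation itself degenerates under the blow-up (the function $g$ rescales to zero), this quasiconformal inequality does survive. Combining the uniform bound on $|\widetilde\sigma_n|$ with standard graphical compactness, a subsequence of $\widetilde\Sigma_n$ converges in $C^{1,\alpha}_{\mathrm{loc}}$ to a complete immersed limit $\Sigma_\infty$ of class $C^{1,\alpha}$, with bounded second fundamental form, $|\sigma_\infty(0)|=1$, quasiconformal Gauss map (with the same $\gamma\le -1$), and angle function $\nu_\infty\ge 0$. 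Lemma~\ref{opencuasi} applied to $\Sigma_\infty$ then yields the dichotomy: either $\Sigma_\infty$ is a piece of plane, which is ruled out by $|\sigma_\infty(0)|=1$, or its Gauss map $N_\infty$ is open, in which case $\nu_\infty\ge 0$ forces $\nu_\infty>0$ strictly, so that $\Sigma_\infty$ is a complete multigraph. Theorem~\ref{alfa} now forces $\Sigma_\infty$ to be a plane, again contradicting $|\sigma_\infty(0)|=1$.

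The main obstacle is precisely engineering the blow-up so that enough structure is retained in the limit. Completeness and the normalization $|\sigma_\infty(0)|=1$ are standard outputs of point-picking; the multigraph property in the limit is delicate because we only have $\nu_\infty\ge 0$ a priori, and it is Lemma~\ref{opencuasi} (openness of quasiconformal Gauss maps) that upgrades this to strict positivity. The loss of the Weingarten equation in the limit is exactly the feature that Theorem~\ref{alfa} was designed to handle, with no PDE assumption but only a geometric inequality on the Gauss map; without that earlier theorem, the argument would not close.
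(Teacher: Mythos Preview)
Your overall architecture is the same as the paper's: prove a curvature estimate by blow-up and then invoke Theorem~\ref{b2}. However, two steps in your sketch do not go through as written.

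First, your appeal to Lemma~\ref{lem:cuw1} is illegitimate in general. That lemma assumes the Weingarten equation is of \emph{minimal type}, i.e., $g(0)=0$; only then does the curvature diagram lie in a wedge through the origin, yielding \eqref{cuasiconfo}. For a uniformly elliptic equation with $g(0)\neq 0$ the original surface $\Sigma$ need not have quasiconformal Gauss map at all (its curvature diagram sits in a wedge with vertex $(\alpha,\alpha)$, $\alpha=g(0)$), so your claim ``each $\Sigma_n$ has quasiconformal Gauss map, and the pinching is scale invariant'' is false. The correct argument, which the paper carries out, is that the \emph{rescaled} equations have umbilical constants $\alpha_n=g(0)/\lambda_n\to 0$, so the wedge regions $\cR_n$ drift to a wedge through the origin, and it is the \emph{limit} surface that inherits the quasiconformal Gauss map.

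Second, and more seriously, $C^{1,\alpha}_{\mathrm{loc}}$ convergence does not give you $|\sigma_\infty(0)|=1$, a bounded second fundamental form, or even a well-defined principal-curvature inequality on $\Sigma_\infty$: all of these are second-order. With only uniform $C^2$ bounds on the graphing functions you get $C^{1,\alpha}$ compactness and a $C^{1,\alpha}$ limit, but nothing prevents the Hessians from oscillating and the limit from being a plane. To rule this out you must upgrade to $C^{2}$ (indeed $C^{2,\alpha}$) convergence, and since the Weingarten equation is \emph{fully nonlinear} this is not a Schauder-type triviality. The paper spends its Step~2 precisely on this: it verifies that the rescaled operators $F^n$ have uniformly bounded first derivatives and a common ellipticity constant on a fixed compact set of jet space, and then invokes Nirenberg's interior $C^{2,\alpha}$ estimate for fully nonlinear elliptic equations in two variables. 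This is where uniform ellipticity is really used, and your sketch omits it entirely. Without this step, the contradiction with Theorem~\ref{alfa} does not close.

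Your use of Lemma~\ref{opencuasi} to pass from $\nu_\infty\geq 0$ to $\nu_\infty>0$ is fine (the paper leaves this implicit in its Step~4), but it only becomes relevant once the two gaps above are filled.
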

\begin{proof}
Let $\Sigma$ be a complete multigraph that satisfies a uniformly elliptic Weingarten equation. By Theorem \ref{curves2} below, $\Sigma$ has bounded second fundamental form. Thus, $\Sigma$ is a plane, by Theorem \ref{b2}.
\end{proof}

So, it remains to prove the following curvature estimate. 
\begin{theorem}\label{curves2}
Let $\Sigma$ be a complete surface in $\R^3$, possibly with boundary $\parc \Sigma$, and whose Gauss map image $N(\Sigma)$ is contained in an open hemisphere of $\S^2$. Assume that $\Sigma$ satisfies a uniformly elliptic Weingarten equation \eqref{weq2} for some $g:[0,\8)\to \R$, and let $\Lambda>0$ denote the ellipticity constant of $g$ in \eqref{unife2}.

Then, for every $d>0$ there exists a constant $C=C(\Lambda,g(0),d)$ such that for each $p\in \Sigma$ with $d_{\Sigma} (p,\parc \Sigma)\geq d$, it holds $$|\sigma (p)| \leq C.$$ Here, $d_{\Sigma}$ and $|\sigma|$ denote, respectively, the distance function in $\Sigma$ and the norm of the second fundamental form of $\Sigma$.
\end{theorem}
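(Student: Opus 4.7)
I will argue by contradiction through a point-picking and rescaling argument in the spirit of Rosenberg-Souam-Toubiana \cite{RST}, and control the object produced in the blow-up limit by Theorem \ref{alfa} rather than by any Weingarten equation, since the equation is lost under the rescaling.

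Fix $\Lambda \in (0,1)$, $\alpha_0 = g(0) \ge 0$ and $d>0$, and suppose the estimate fails. Then there exist functions $g_n \in C^2([0,\infty))$ all satisfying \eqref{unife2} with the same $\Lambda$ and $g_n(0) = \alpha_0$, complete surfaces $\Sigma_n$ satisfying $H_n = g_n(H_n^2 - K_n)$ with $N_n(\Sigma_n)$ in an open hemisphere (which, after a rotation, we take to be $\S_+^2$), and points $p_n \in \Sigma_n$ with $d_{\Sigma_n}(p_n, \partial \Sigma_n) \ge d$ and $|\sigma_n(p_n)| \to \infty$. I apply standard Hamilton-type point picking to $F_n(x) := |\sigma_n(x)|\bigl(d - d_{\Sigma_n}(p_n, x)\bigr)$ on $B_{\Sigma_n}(p_n, d)$ to produce interior maximizers $q_n$ and radii $r_n > 0$ with $\lambda_n := |\sigma_n(q_n)| \to \infty$, $\lambda_n r_n \to \infty$, and $|\sigma_n| \le 2\lambda_n$ on $B_{\Sigma_n}(q_n, r_n)$. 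Translating $q_n$ to the origin and dilating by $\lambda_n$ produces rescaled surfaces $\tilde\Sigma_n$ with $|\tilde\sigma_n(0)| = 1$, $|\tilde\sigma_n| \le 2$ on the intrinsic ball $B_{\tilde\Sigma_n}(0, \lambda_n r_n)$, Gauss map image in $\S_+^2$, and associated Weingarten function $\tilde g_n(t) := g_n(\lambda_n^2 t)/\lambda_n$; this $\tilde g_n$ inherits \eqref{unife2} with the same $\Lambda$, while $\tilde g_n(0) = \alpha_0/\lambda_n \to 0$.

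From the uniform bound on $|\tilde\sigma_n|$ and the uniformicity principle recalled in Remark \ref{2ff}, each $\tilde\Sigma_n$ is locally a graph over its tangent plane with uniform $C^{1,1}$ control. The uniform ellipticity of the $\tilde g_n$, combined with Nirenberg's interior $C^{2,\alpha}$ estimate for two-dimensional fully nonlinear elliptic equations (as invoked in the proof of Assertion \ref{ass:2H}), upgrade this to uniform $C^{2,\alpha}$ control. A standard diagonal and Arzel\`a-Ascoli argument then extracts a subsequence converging in $C^2$ on compact sets to a complete immersed surface $\Sigma_\infty \subset \R^3$ with $|\sigma_\infty(0)| = 1$, $|\sigma_\infty| \le 2$, and Gauss map image in the closed upper hemisphere. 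Writing the rescaled relations in principal-curvature form $\tilde\kappa_2 = \tilde f_n(\tilde\kappa_1)$, condition \eqref{unife} persists for $\tilde f_n$ with the same constants $\Lambda_1, \Lambda_2$, while $\tilde f_n(0) \to 0$; arguing exactly as in the proof of Lemma \ref{lem:cuw1}, the curvature diagram of $\tilde\Sigma_n$ lies in a fixed wedge of the form \eqref{wedge} with $m_1, m_2 < 0$, up to a vertical displacement $o(1)$. Passing to the limit, $\Sigma_\infty$ satisfies \eqref{cuasiconfo} for some $\gamma \le -1$ and hence has quasiconformal Gauss map.

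Since $|\sigma_\infty(0)| = 1$, $\Sigma_\infty$ is not a plane, so Lemma \ref{opencuasi} forces its Gauss map to be an open mapping. Its image, being open and contained in the closed upper hemisphere, lies entirely in the open upper hemisphere, and thus $\Sigma_\infty$ is a complete multigraph with bounded second fundamental form and quasiconformal Gauss map. Theorem \ref{alfa} then yields that $\Sigma_\infty$ is a plane, contradicting $|\sigma_\infty(0)| = 1$. The main obstacle, as anticipated in the introduction, is that the rescaled functions $\tilde g_n$ need not converge on $[0,\infty)$, so $\Sigma_\infty$ inherits no Weingarten equation; this is circumvented by extracting from uniform ellipticity the weaker but blow-up-stable quasiconformality inequality \eqref{cuasiconfo}, and by calling on Theorem \ref{alfa}, whose hypotheses make no mention of any elliptic PDE on $\Sigma_\infty$.
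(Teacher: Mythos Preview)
Your proposal is correct and follows essentially the same strategy as the paper: a Rosenberg--Souam--Toubiana point-picking blow-up, Nirenberg's interior $C^{2,\alpha}$ estimate to extract a $C^2$ limit, the wedge-region argument to show the limit has quasiconformal Gauss map, and Theorem \ref{alfa} for the contradiction; you even make explicit, via Lemma \ref{opencuasi}, the passage from ``closed hemisphere'' to ``open hemisphere'' that the paper leaves implicit in its Step 4.

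The one place where your writeup is thinner than the paper's is the invocation of Nirenberg. What is needed is not merely a $C^{2,\alpha}$ estimate for each fixed $\tilde g_n$, but one whose constants are \emph{uniform in $n$}; this requires checking that the first-derivative bounds $|F^n_w|$ on the relevant compact set depend only on $\Lambda$ and not on the particular $\tilde g_n$. The paper devotes its entire Step 2 to this computation (equations \eqref{ar1}--\eqref{ar2}), using the universal bound $\sqrt{t}\,|\tilde g_n'(t)| \le \sqrt{\Lambda}/2$ together with a careful estimate of $|(\cH^2-\mathcal{K})_w|/\sqrt{\cH^2-\mathcal{K}}$. Your reference to Assertion \ref{ass:2H} does not quite cover this, since there a single fixed equation is in play and no uniformity in $n$ is at stake.
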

\begin{proof}
The basic strategy of the argument is inspired by a general curvature estimate for stable CMC surfaces in Riemannian $3$-manifolds by Rosenberg, Souam and Toubiana \cite{RST}. For other adaptations of the Rosenberg-Souam-Toubiana estimate to different geometric theories, see \cite{BGM,GMT}. 

To start, arguing by contradiction, assume that there is a sequence of complete immersed surfaces $\psi_n:\Sigma_n\fl\r^3$, possibly with boundary, such that:
 \begin{enumerate}
 \item
The Gauss map image of each $\Sigma_n$ lies in the upper hemisphere $\S_+^2$.
 \item
Each $\Sigma_n$ satisfies a uniformly elliptic Weingarten equation $H=g_n(H^2-K)$, with ellipticity constant $\Lambda$, and $g_n(0)=g(0)$.
 \item
There exist points $p_n\in\Sigma_n$ such that $d_{\Sigma_n}(p_n,\partial\Sigma_n)\geq d$ and $|\sigma_{\Sigma_n}(p_n)|>n$.
 \end{enumerate}

Let us first of all explain the idea behind the proof, in the (known) case of CMC surfaces. First, one makes a blow-up process to the immersions $\psi_n$ after sending the points $p_n$ to the origin, to obtain new immersions $\varphi_n=\landa_n\psi_n$ with $\landa_n\to \8$, such that the second fundamental forms of the $\varphi_n$ are uniformly bounded, and equal to $1$ at the origin. Then, a standard compactness argument of CMC surface theory would prove that a subsequence of the $\varphi_n$ converges uniformly on compact sets to a complete minimal surface $\Sigma_0$ in $\R^3$, that would have Gauss map image contained in a closed hemisphere, and non-zero Gauss curvature at the origin. This would contradict the classical Osserman theorem according to which the Gauss map image of a complete, non-planar minimal surface is dense in $\S^2$, thus giving the desired curvature estimate. 

To prove the above compactness property, a key point is to ensure that the bound of the second fundamental form implies local uniform $C^{2,\alfa}$ estimates for all the immersions $\varphi_n$. In the CMC case, this follows easily by Schauder theory (see Chapter 6 in \cite{GT}), because the CMC equation is quasilinear.

In order to extend these well-known CMC ideas to our general elliptic Weingarten setting, the two main sources of complication are, on the one hand, that the fully nonlinear nature of the Weingarten equation prevents the direct use of Schauder estimates in order to obtain local uniform $C^{2,\alfa}$ estimates for the sequence of surfaces $\varphi_n$; and, on the other hand, that even if the limit surface $\Sigma_0$ exists, it will not be minimal or satisfy an elliptic Weingarten equation (there is no $C^1$ convergence of the equations in this case).

Taking these considerations in mind, we split the proof of Theorem \ref{curves2} into several steps:

\vspace{0.2cm}

\noindent {\bf Step 1:} \emph{A blow-up process}

\vspace{0.2cm}

Let $D_n=D_{\Sigma_n}(p_n,d/2)$ be the compact metric disk in $\Sigma_n$ of center $p_n$ and radius $d/2$, and let $q_n$ be the maximum in $D_n$ of the function
$$
h_n(q)=|\sigma_{\Sigma_n}(q)|d_{\Sigma_n}(q,\partial D_n),\hspace{0.5cm} q\in D_n.
$$

Obviously, $q_n$ lies in the interior of $D_n$ since $h_n$ vanishes on $\partial D_n$. Define next $\lambda_n:=|\sigma_{\Sigma_n}(q_n)|$ and $r_n:=d_{\Sigma_n}(q_n,\partial D_n)$. Then,
\begin{equation}\label{lann}
\lambda_n r_n=|\sigma_{\Sigma_n}(q_n)| \, d_{\Sigma_n}(q_n,\partial D_n)=h_n(q_n)\geq h_n(p_n)>n\ \frac{d}{2}.
\end{equation}
Thus, $\lim _n \lambda_n = \infty$. Also, observe that if we let $\hat{D}_n:=D_{\Sigma_n}(q_n,r_n/2)\subset D_n$, then for any $w_n\in \hat{D}_n$ we have
\begin{equation}\label{destr}
d_{\Sigma_n}(q_n,\partial D_n)\leq 2 d_{\Sigma_n}(w_n,\partial D_n).
\end{equation}
Consider next the immersions $\varphi_n:= \landa_n \psi_n :\hat{D}_n\subset \Sigma_n\fl\r^3$. Then, by
\eqref{destr}, we have for any $w_n\in\hat{D}_n$ that 
\begin{equation}\label{unisec}
|\hat{\sigma}_{n} (w_n)|= \frac{|\sigma_{\Sigma_n}(w_n)|}{\lambda_n} =\frac{h_n(w_n)}{\lambda_n d_{\Sigma_n}(w_n,\partial D_n)}\leq\frac{h_n(q_n)}{\lambda_n d_{\Sigma_n}(w_n,\partial D_n)}\leq 2,
\end{equation}
where $\hat{\sigma}_n$ is the second fundamental form of $\varphi_n$. 
Thus, the norms of the $\hat{\sigma}_n$ are uniformly bounded, and moreover, $|\hat{\sigma}_n(q_n)|=1$. Also, by \eqref{lann}, the radii of the disks $\hat{D}_n$ with respect to the metric induced by $\varphi_n$ diverge to infinity.

Finally, observe that since $\psi_n$ satisfies the Weingarten equation $H=g_n(H^2-K)$, it follows that $\varphi_n$ verifies the corresponding uniformly elliptic Weingarten equation 
 \begin{equation}\label{wefi}
H=\cG_n (H^2-K), \hspace{1cm} \cG_n(t):=\frac{1}{\landa_n} g_n(\landa_n^2 t).
 \end{equation} 
Note that $4t(\cG_n'(t))^2\leq \Lambda<1$, for all $t\in [0,\8)$. That is, the ellipticity constant associated to each $\cG_n$ is also $\Lambda$. It is important to note here that the Weingarten equations \eqref{wefi} do not generally converge $C^1$ to an elliptic Weingarten equation as $\landa_n\to \8$.
\vspace{0.2cm}

\noindent {\bf Step 2:} \emph{A local uniform $C^{2,\alfa}$-estimate for the blown-up immersions}
\vspace{0.2cm}

Assume after a translation of each $\varphi_n$ that $\varphi_n(q_n)=0$ for all $n$. Consider a subsequence of the immersions $\varphi_n$ so that the unit normals at $\varphi_n(q_n)$ converge to some $N_0\in \overline{\S_+^2}$, and choose, after a linear isometry of $\R^3$, new Euclidean coordinates $(x_1,x_2,x_3)$ such that $N_0=(0,0,1)$.

Recall that we have the bound $|\hat{\sigma}_n|\leq 2$ on $\hat{D}_n$, and so we are in the conditions of Remark \ref{2ff}. Then, using this remark and the fact that the unit normals of the $\varphi_n$ converge to $(0,0,1)$ at the origin, it follows that there exist positive constants $\delta_0,\mu_0$ (that correspond to $\delta=\delta(\sigma_0)$, $\mu=\mu(\sigma_0)$ for $\sigma_0=2$ in Remark \ref{2ff}) such that for each $n$ large enough, a neighborhood in $\varphi_n(\hat{D}_n)$ of the origin is given by the graph $x_3=v_n(x_1,x_2)$ of a function $v_n$ defined on the disk $B_{\delta_0}\subset\r^2$ centered at the origin and of radius $\delta_0$, and also:
\begin{enumerate}
\item[(i)] $|Dv_n|<3/2$ in $B_{\delta_0}$.
 \item[(ii)]
$\|v_n\|_{C^2(B_{\delta_0})} \leq \mu_0$. 
\end{enumerate}
Since $\varphi_n$ satisfies \eqref{wefi}, it follows that $v_n(x_1,x_2)$ is a solution to the uniformly elliptic PDE 
 \begin{equation}\label{uen}
F^n (v_{x_1},v_{x_2},v_{x_1x_1},v_{x_1x_2},v_{x_2x_2})=0,
 \end{equation} 
where $F^n(p,q,r,s,t) \in C^{2} (\R^5)$ is given by 
 \begin{equation}\label{uen2}
F^n(p,q,r,s,t)=\cH-\cG_n(\cH^2-\mathcal{K}),
 \end{equation} 
and $\cH, \mathcal{K}$ are defined in \eqref{haka}. Note that, by conditions (i), (ii) above, the images of the sets $(Dv_n(B_{\delta_0}),D^2v_n(B_{\delta_0}))$ lie in the fixed compact set $\Theta$ of $\R^5$ given by 
 \begin{equation}\label{comteta}
\Theta:=\{(p,q,r,s,t) : p^2+q^2\leq 9/4, \, |p|+|q|+|r|+|s|+|t|\leq  \mu_0\}.
\end{equation}

In order to ensure convergence of the immersions $\varphi_n$, we will prove that there exists a uniform bound of the $C^{2,\alfa}$ norm of $v_n$ in $B_{\delta'}$, for some fixed $\delta'\in (0,\delta_0)$, some $\alfa\in (0,1)$, and for all $n$. In order to do this, we will use Nirenberg's a priori estimate for fully nonlinear elliptic equations in dimension two (\cite[Theorem I]{Nir}), applied to each elliptic equation \eqref{uen}. To apply Nirenberg's theorem, it suffices to check the following two conditions for the compact set $\Theta$ in \eqref{comteta}:

\begin{enumerate}
\item[(a)]
All first derivatives of all $F^n$ are uniformly bounded in $\Theta$.
 \item[(b)]
There exists a constant $\landa>0$ such that 
 \begin{equation}\label{prob}
 F^n_r \, \xi^2+ F^n_s \, \xi \eta + F^n_t \, \eta^2 \geq \landa (\xi^2+\eta^2)
  \end{equation} 
at every point of $\Theta$, for any $(\xi,\eta)\in \R^2$ and any $n$.\end{enumerate}

Let us prove these two conditions.
The expression $\cH^2-\mathcal{K}$ is clearly homogeneous and quadratic in $(r,s,t)$, for each $(p,q)$ fixed.
A computation shows that it has one zero eigenvalue, and two positive eigenvalues $\landa_1^2,\landa_2^2$ given by
 \begin{equation}\label{landai}
\landa_i^2= \landa_i^2 (p,q)= \frac{6 + p^4 + 6 q^2 + q^4 + p^2 (6 + 4 q^2) \pm \sqrt{
  \mathcal{Q}_4(p^2,q^2)}}{8(1+p^2+q^2)}>0,
  \end{equation} 
where $\mathcal{Q}_4(x,y)$ is the polynomial of degree $4$ $$\mathcal{Q}_4(x,y)=x^4 + y^3 (8 y-4) + 
    2 x^2 y (14 + 9 y) + (y^2-2y -2)^2 + 
    4 x (2 + 10 y + 7 y^2 + 2 y^3).$$
Moreover, it is easy to check from \eqref{landai} that both $\landa_i(p,q)$ are bounded from below by a positive constant when we restrict to the compact set $\Theta$. So, after an orthogonal change of coordinates $(r,s,t)\mapsto (\bar{r},\bar{s},\bar{t})$, where the related orthogonal matrix depends on $(p,q)$, we can write 
 \begin{equation}\label{acheka}
(\cH^2-\mathcal{K})(p,q,r,s,t)= \landa_1^2 \, \bar{r}^2 + \landa_2^2\, \bar{t}^2,
 \end{equation} 
where here $\bar{r},\bar{t}$ depend on $(p,q,r,s,t)$, the dependence on $(r,s,t)$ being linear. 

All these functions $\landa_i,\bar{r},\bar{s},\bar{t}$ can be chosen to be real analytic in their arguments, except around the points $(p,q,r,s,t)$ where $\landa_1(p,q)=\landa_2(p,q)$, i.e., around the points where the eigenvalue multiplicity changes. Call $\cB$ to this set of points. 

We claim that $\cB\cap \Theta$ is empty. To see this, first observe that, by \eqref{landai}, $\cB$ is given by the expression $\mathcal{Q}_4(p^2,q^2)=0$. We can rewrite $\mathcal{Q}_4$ as $$\mathcal{Q}_4(x,y)=((x + y)^2 - 2 (x + y) - 2)^2 +4 x y (10 + x^2 + 10 y + y^2 + x (10 + 3 y)).$$ So, $$\mathcal{Q}_4(p^2,q^2)\geq \big((p^2 + q^2)^2 - 2 (p^2 + q^2) - 2\big)^2,$$ and the expression in the right hand side vanishes only when $p^2+q^2=1+\sqrt{3}$. By the definition of $\Theta$ in \eqref{comteta}, it is clear then that $\mathcal{Q}_4 (p^2,q^2)>0$ in $\Theta$, since $p^2+q^2\leq 9/4$ in $\Theta$. Thus, $\Theta$ does not intersect $\cB$. In particular, the functions $\landa_i,\bar{r},\bar{s},\bar{t}$ are real analytic in $\Theta$. Now, note that for any $w\in \{p,q,r,s,t\}$ we have in $\Theta$, by \eqref{acheka},

\begin{equation}\label{ar1}
\def\arraystretch{2.5}\begin{array}{ccl} 
\displaystyle\left| \frac{(\cH^2-\mathcal{K})_w}{\sqrt{\cH^2-\mathcal{K}}}\right| & =& \displaystyle \frac{| (\landa_1^2)_w \,\bar{r}^2 + (\landa_2^2)_w\, \bar{t}^2 + 2 \landa_1^2 \,\bar{r} \,\bar{r}_w + 2 \landa_2^2\, \bar{t} \bar{t}_w|}{\sqrt{\landa_1^2\, \bar{r}^2 + \landa_2^2\, \bar{t}^2}} \\ & \leq & \displaystyle \frac{|(\landa_1^2)_w \, \bar{r}|}{\landa_1} + \frac{|(\landa_2^2)_w \, \bar{t}|}{\landa_2} + |2 \landa_1\, \bar{r}_w| + |2 \landa_2 \, \bar{t}_w | \\ & \leq & C_1=C_1(\Theta)
\end{array}
\end{equation} for some positive constant $C_1$ depending on $\Theta$. From here and \eqref{uen2}, we have in $\Theta$:
\begin{equation}\label{ar2}
\def\arraystretch{2.5}\begin{array}{ccl} 
|F^n_w| & =& \left|\cH_w - \sqrt{\cH^2-\mathcal{K}}\,\cG_n'(\cH^2-\mathcal{K}) \, \displaystyle\frac{(\cH^2-\mathcal{K})_w}{\sqrt{\cH^2-\mathcal{K}}}\right|,\\
& \leq & {\rm max}_{\Theta} |\cH_w| + \frac{1}{2} C_1(\Theta) \leq C_2(\Theta)\end{array}
\end{equation} 
where we have used that $\sqrt{t}|\cG_n'(t)|< 1/2$ by the ellipticity condition on $\cG(t)$. Thus, all the first derivatives of $F^n$ with respect to any $w\in \{p,q,r,s,t\}$ are uniformly bounded in $\Theta$. This proves property (a).

Once we know that (a) holds, the proof of (b) is a straightforward consequence of the fact that all the equations \eqref{uen} are uniformly elliptic on $\Theta$ for the same ellipticity constant, since all the $\cG_n$ satisfy the uniform condition $4t(\cG_n'(t))^2\leq \Lambda<1$ for the same $\Lambda$. Thus, \eqref{prob} holds for some $\landa=\landa(\Lambda,\Theta)$.

With this, we are in the conditions of Theorem I in \cite{Nir} (alternatively, see also Theorem 17.9 in \cite{GT}), which implies what follows in our situation. Fix $\delta'\in (0,\delta_0)$ once and for all, from now on. Then, there exist constants $C'>0$ and $\alfa\in (0,1)$ such that 
 \begin{equation}\label{holdest}
||v_n||_{C^{2,\alfa}(B_{\delta'})} \leq C'
 \end{equation}
 for all $n$. Here $C',\alfa$ depend only on $\Lambda$, in the following sense: at first, these constants depend on $\delta_0,\mu_0,\delta'$, the ellipticity constant $\Lambda$ and the bounds on the derivatives of $F^n$ in $\Theta$. Nonetheless, $\delta_0,\mu_0$ are determined by the condition that $|\hat{\sigma}_{n}|\leq 2$, and the bounds \eqref{ar2} obtained for $F^n_w$ on $\Theta$ are independent of the equation $F^n$, i.e., they only depend on $\delta_0$. Since $\delta'$ has been considered fixed, the numbers $C',\alfa$ only depend on $\Lambda$.

\vspace{0.2cm}

\noindent {\bf Step 3:} \emph{Existence and properties of a limit surface of the blown-up immersions}

\vspace{0.1cm}

It follows by the estimate \eqref{holdest} that the set $\{v_n\}_n$ is bounded in the $C^{2,\alfa}(B_{\delta'})$-norm, and therefore is precompact in the $C^{2,\beta}(B_{\delta'})$-norm, for any $\beta\in (0,\alfa)$. Thus, a subsequence of the $v_n$ converges uniformly in the $C^{2,\beta}(B_{\delta'})$-norm to some function $v^0 \in C^{2,\beta} (B_{\delta'})$; here, $\beta$ is any number in $(0,\alfa)$, that we also consider fixed from now on.

Once here, we can apply a typical diagonal extension process and deduce that the graph $x_3=v^0(x_1,x_2)$ can be extended to a complete immersion $\psi^0:\Sigma_0\flecha \R^3$ that, by construction, is a limit in the $C^2$-topology on compact sets of a subsequence of the immersions $\varphi_n$. We denote this limit surface simply by $\Sigma_0$. That $\Sigma_0$ is complete follows since the radii of the $\hat{D}_n$ go to $\8$.

Note that $\Sigma_0$ is not, in general, an elliptic Weingarten surface since, as explained before, the elliptic Weingarten equations \eqref{wefi} do not necessarily converge $C^1$ to a Weingarten equation.

We single out the following list of properties of $\Sigma_0$, that will be proved below.

\begin{enumerate}
\item[(P1)]
$\Sigma_0$ is complete.
 \item[(P2)]
$\Sigma_0$ has bounded second fundamental form. 
 \item[(P3)]
The Gauss map image $N(\Sigma_0)$ lies in the closed hemisphere $\overline{\S_+^2}$.
\item[(P4)]
The Gauss map $N:\Sigma_0\flecha \S^2$ is quasiconformal.
 \item[(P5)]
$\Sigma_0$ is not a plane.
\end{enumerate}

The fact that $\Sigma_0$ is a complete surface was explained above. The second fundamental form of $\Sigma_0$ is bounded since it is a $C^2$-limit of the immersions $\varphi_n(\hat{D}_n)$, and $|\hat{\sigma}_n|\leq 2$ on $\hat{D}_n$. That $N(\Sigma_0)$ lies in $\overline{\S_+^2}$ is also immediate, since all the $\varphi_n$ are multigraphs (here $\S_+^2$ denotes the upper hemisphere in the original $(x,y,z)$-coordinates of $\R^3$). 
Since the norm of the second fundamental form of $\varphi_n(\hat{D}_n)$ is equal to $1$ at the origin for all $n$, the same happens to $\Sigma_0$; thus, $\Sigma_0$ is not a plane.

 \begin{figure}[htbp]
    \centering
        \includegraphics[width=.5\textwidth]{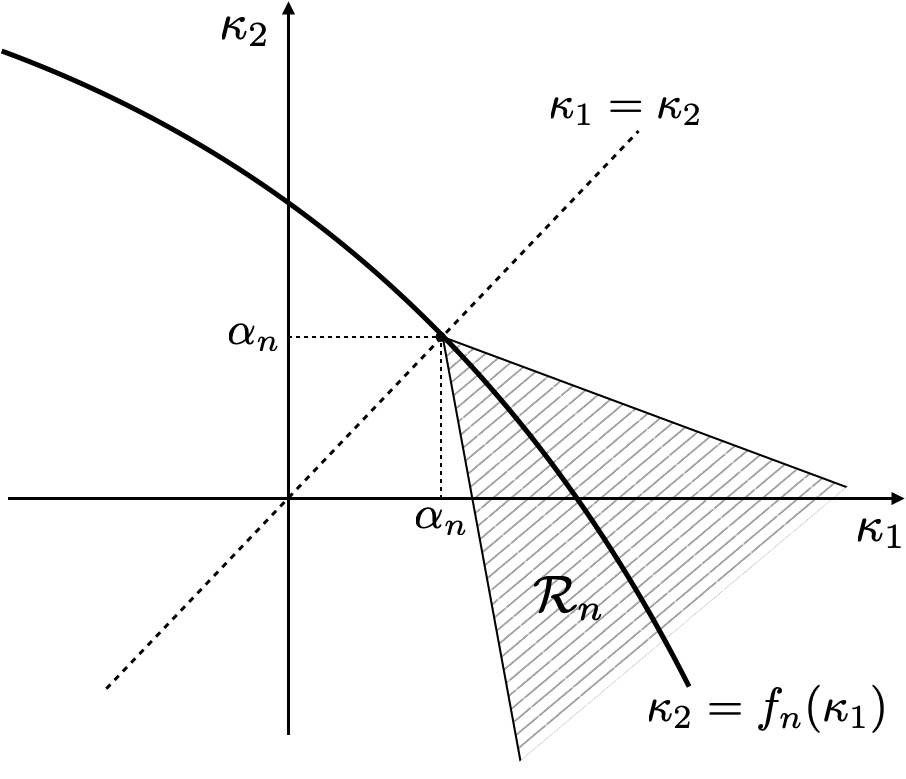}
    \caption{The region $\mathcal{R}_n$.}
    \label{fig:rn}
\end{figure}

So, the only property that remains to check is (P4), i.e., that $\Sigma_0$ has quasiconformal Gauss map. To start, let us rewrite the uniformly elliptic Weingarten equation \eqref{wefi} satisfied by $\varphi_n$ in the form \eqref{wein2}; that is, we rewrite \eqref{wefi} as $\kappa_2=f_n(\kappa_1),$ where $f_n\in C^{2}(\R)$ satisfies $f_n\circ f_n={\rm Id}$ and the uniform ellipticity condition \eqref{unife}. Let $\kappa_1^n\geq \kappa_2^n$ denote the principal curvatures of $\varphi_n$. Then, by the bounds in \eqref{unife}, it is clear that there exist $m_1,m_2<0$ (independent of $n$) such that, for each $n$, the curvature diagram $$(\kappa_1^n(\hat{D}_n),\kappa_2^n (\hat{D}_n))\subset \R^2$$ lies in the wedge region of the plane $$\cR_n:= \{(x,y): x\geq y, m_1 (x-\alfa_n)\leq y-\alfa_n\leq m_2 (x-\alfa_n)\}\subset \R^2,$$ where $\alfa_n$ is the umbilical constant of \eqref{wefi}, given by $\cG_n(0)=\alfa_n$, or equivalently by $f_n(\alfa_n)=\alfa_n$. See Figure \ref{fig:rn}. Note that $\alfa_n=g(0)/\landa_n\to 0$ as $n\to \8$. Thus, the regions $\cR_n$ converge to the region $\cR$ in \eqref{wedge}, and it follows that the (bounded) set $(\kappa_1(\Sigma_0),\kappa_2(\Sigma_0))\subset \R^2$ lies inside this wedge region $\cR$, where $\kappa_1\geq \kappa_2$ are the principal curvatures of $\Sigma_0$. By the arguments explained after Definition \ref{def:cuasic}, we deduce that the Gauss map of $\Sigma_0$ is quasiconformal, as claimed.

\vspace{0.2cm}

\noindent {\bf Step 4:} \emph{A surface $\Sigma_0$ with the properties (P1)-(P5) of Step 3 cannot exist.}

\vspace{0.2cm}

This is immediate, by Theorem \ref{alfa}. Hence, we obtain a contradiction, which completes the proof of Theorem \ref{curves2}.
\end{proof}

\section{A Bernstein-type theorem in the non-uniformly elliptic case}\label{sec:5}

Let $\Sigma$ be a complete, non-compact surface in $\R^3$, with principal curvatures $\kappa_1\geq \kappa_2$. It follows then from Section \ref{sec:cuasi} that $\Sigma$ has quasiconformal Gauss map if and only if its curvature diagram $(\kappa_1(\Sigma),\kappa_2(\Sigma))$ is contained in a \emph{wedge} region $\cR$ of $\R^2$ that lies between two straight half-lines with negative slopes that pass through the origin, as in Figure \ref{fig:wedges1}, left. See Remark \ref{caposi} and the discussion after Definition \ref{def:cuasic}.

Motivated by this, we study next a different curvature diagram restriction.

Let $\varphi_1\leq \varphi_2:[0,\8)\flecha \R$ be two decreasing $C^1$ functions with $\varphi_i(0)=0$, $\varphi_i'(0)=m_i<0$, and assume that both $\varphi_i$ are bounded from below, i.e., $s_0\leq \varphi_1\leq \varphi_2\leq 0$ for some $s_0<0$. Consider the planar regions $\cR_{\varphi}$, $\cR_{\varphi}^*$ given by (see Figure \ref{fig:hiperbolas})
\begin{equation}\label{errefi}
\cR_{\varphi}:=\{(x,y): x\geq 0, \varphi_1(x)\leq y \leq \varphi_2(x)\}, \hspace{0.5cm} \cR_{\varphi}^*:=\{(-y,-x) : (x,y)\in \cR_{\varphi}\}.
\end{equation}

\begin{figure}[htbp]
    \centering
    \includegraphics[width=1\textwidth]{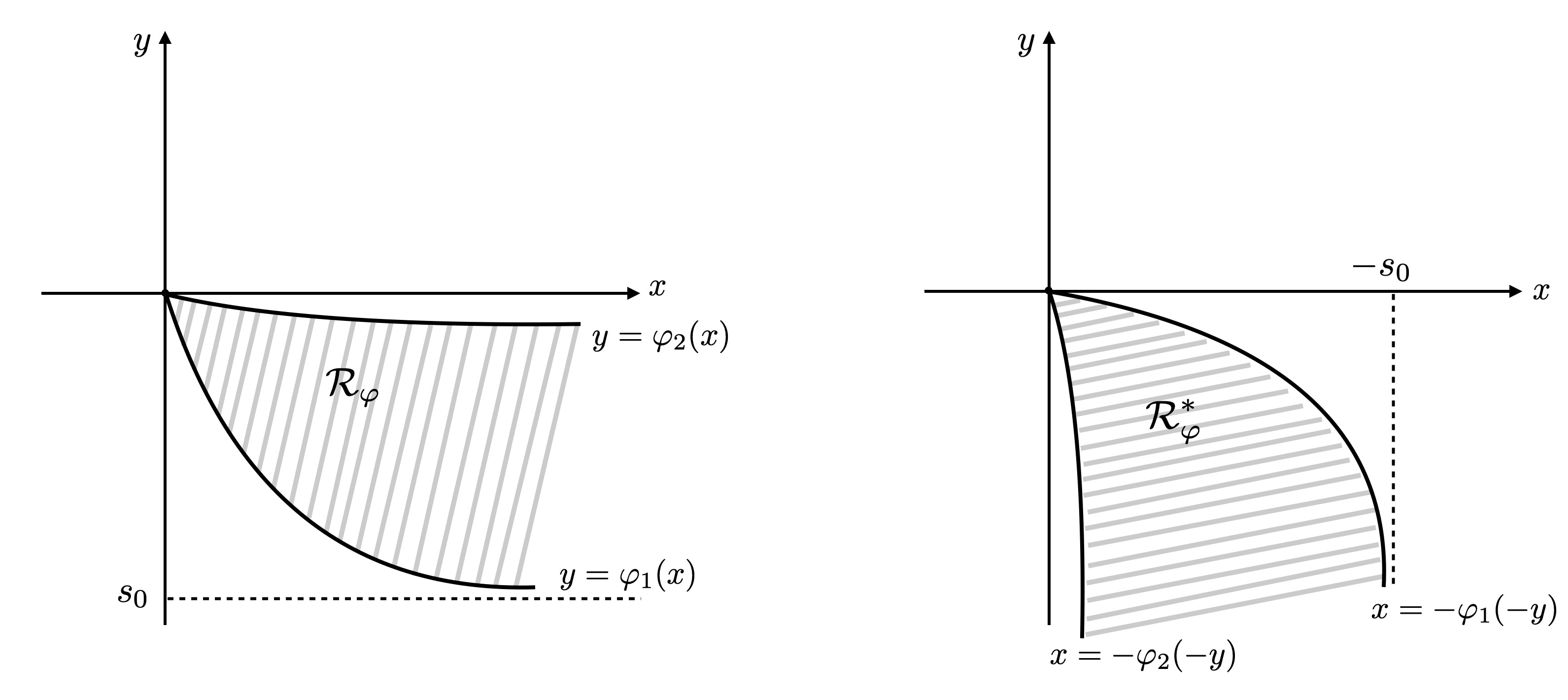}
    \caption{The regions $\cR_{\varphi}$ and $\cR_{\varphi}^*$.}
    \label{fig:hiperbolas}
\end{figure}

\begin{theorem}\label{th:erre}
Let $\Sigma$ be a complete multigraph whose curvature diagram $(\kappa_1(\Sigma),\kappa_2(\Sigma))$ is contained in a planar region of the form $\cR_{\varphi}$ or $\cR_{\varphi}^*$. Then, $\Sigma$ is a plane.
\end{theorem}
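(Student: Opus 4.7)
The plan is to reduce Theorem~\ref{th:erre} to Theorem~\ref{alfa} by means of a small parallel deformation. First I would normalize: reversing the orientation of $\Sigma$ sends the principal curvatures $(\kappa_1,\kappa_2)$ to $(-\kappa_2,-\kappa_1)$, and hence interchanges the regions $\cR_\varphi$ and $\cR_\varphi^*$; it also preserves the multigraph condition, since it merely flips the Gauss map image to the antipodal open hemisphere. So we may assume that the curvature diagram of $\Sigma$ lies in $\cR_\varphi$, i.e.\ $\kappa_1 \geq 0$ (possibly unbounded) while $\kappa_2 \in [s_0,0]$. Fix now $t<0$ with $|t|<1/|s_0|$, and consider the parallel map $\Phi_t\colon\Sigma\to\R^3$, $\Phi_t(p)=p+tN(p)$. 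Since $1-t\kappa_1=1+|t|\kappa_1\geq 1$ and $1-t\kappa_2=1+|t|\kappa_2\geq 1+|t|s_0>0$, the map $\Phi_t$ is an immersion, $\Sigma_t:=\Phi_t(\Sigma)$ is a smooth immersed surface, and its unit normal at $\Phi_t(p)$ equals $N(p)$. In particular the Gauss map image of $\Sigma_t$ lies in the same open hemisphere as that of $\Sigma$, so $\Sigma_t$ is also a multigraph. Moreover, the pulled-back induced metric on $\Sigma$ satisfies $I_t\geq (1+|t|s_0)^2\,I$, so the completeness of $(\Sigma,I)$ passes to $(\Sigma,I_t)=\Sigma_t$.

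Next I would verify the two remaining hypotheses of Theorem~\ref{alfa}. The principal curvatures of $\Sigma_t$ are $\tilde\kappa_i=\kappa_i/(1-t\kappa_i)$, which for our choice of $t$ satisfy
\begin{equation*}
\tilde\kappa_1=\frac{\kappa_1}{1+|t|\kappa_1}\in\left[0,\tfrac{1}{|t|}\right),\qquad \tilde\kappa_2=\frac{\kappa_2}{1+|t|\kappa_2}\in\left[\tfrac{s_0}{1+|t|s_0},0\right],
\end{equation*}
so $\Sigma_t$ has bounded second fundamental form. For the quasiconformality of the Gauss map, I would check that the curvature diagram of $\Sigma_t$ is confined to a wedge region of the form \eqref{wedge}. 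To that end, note that for any $(x,y)\in\cR_\varphi$ with $x>0$ the transformed slope is
\begin{equation*}
\frac{\tilde y}{\tilde x}=\frac{y}{x}\cdot\frac{1+|t|x}{1+|t|y}.
\end{equation*}
As $x\to 0^+$, one has $y/x\in[\varphi_1(x)/x,\varphi_2(x)/x]\to[m_1,m_2]$, so the slope tends to a value in $[m_1,m_2]\subset(-\infty,0)$. As $x\to\infty$, since $\varphi_2$ is monotone decreasing with $\varphi_2'(0)=m_2<0$ and bounded below, its limit $c_2:=\lim_{x\to\infty}\varphi_2(x)$ is a strictly negative number in $[s_0,0)$; hence $y\in[s_0,c_2]$ for all sufficiently large $x$, and the slope becomes asymptotic to $|t|y/(1+|t|y)$, which lies in a compact subinterval of $(-\infty,0)$. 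A compactness/continuity argument on $\cR_\varphi\setminus\{0\}$ then confines all slopes $\tilde y/\tilde x$ to some $[-M_1,-M_2]\subset(-\infty,0)$, placing the curvature diagram of $\Sigma_t$ in a wedge of the form \eqref{wedge} and thus making the Gauss map of $\Sigma_t$ quasiconformal, with $\gamma\leq -1$.

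With these three properties in hand, Theorem~\ref{alfa} forces $\Sigma_t$ to be a plane. Its Gauss map $N_t$ is then constant; but $N_t=N$ under the identification given by $\Phi_t$, so $N$ is constant on $\Sigma$, and by completeness $\Sigma$ is a plane. The main obstacle I anticipate is precisely the quasiconformality step: one has to show that the unbounded far end $\{x\to\infty\}$ of $\cR_\varphi$ maps under the parallel transformation into a region that stays uniformly \emph{away} from the $\tilde\kappa_1$-axis. This uses crucially the fact that $\varphi_2$ is monotone and bounded below with $\varphi_2'(0)<0$, so it admits a strictly negative limit at infinity; the parallel-surface trick is exactly what ``curls'' the unbounded tail of $\cR_\varphi$ into a bounded arc lying strictly below the horizontal axis, which is what makes the whole transformed diagram fit inside a genuine wedge.
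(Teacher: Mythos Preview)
Your proposal is correct and follows essentially the same route as the paper: reduce to $\cR_\varphi$ by reversing orientation, pass to a parallel surface at a small negative distance (the paper writes this as $a=1/t_0$ with $t_0<s_0$, which amounts to your choice $|t|<1/|s_0|$), verify that the parallel surface is a complete multigraph with bounded second fundamental form and quasiconformal Gauss map, and then invoke Theorem~\ref{alfa}. The only cosmetic difference is in the quasiconformality step, where the paper packages the argument via the composites $F_a\circ\varphi_i\circ F_{-a}$ and uses boundedness of the $\kappa_i^a$ directly, whereas you compute the transformed slopes $\tilde y/\tilde x$ explicitly; your asymptotic analysis at $x\to 0^+$ and $x\to\infty$ is fine (for $x\geq 1$ one has $y\leq\varphi_2(1)<0$, which is all you need, even if the stated inclusion $y\in[s_0,c_2]$ is slightly off since $\varphi_2(x)\geq c_2$).
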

\begin{proof}
We will prove the result for $\cR_{\varphi}$; the result for $\cR_{\varphi}^*$ follows then by a change of orientation. Since the curvature diagram of $\Sigma$ lies in some region $\cR_{\varphi}$, if we take $t_0<s_0$, we see that there exists $\ep>0$ such that 
 \begin{equation}\label{condicur}
|\kappa_i(p)-t_0|\geq \ep, \hspace{0.5cm} \forall p\in \Sigma, \hspace{0.5cm} i=1,2.
\end{equation} 

Write $\psi:\Sigma\flecha \R^3$ for the immersion of $\Sigma$ into $\R^3$, and consider for $a:=1/t_0$ the \emph{parallel surface} of $\Sigma$ at a distance $a$, given by $\psi^a := \psi + a N :\Sigma\flecha \R^3$, where $N:\Sigma\flecha \S^2$ is the Gauss map of $\Sigma$. In general, $\psi^a$ may have singular points; indeed, its induced metric $g^a$ can be expressed at any point in terms of an orthonormal basis of principal directions $\{e_1,e_2\}$ of $\Sigma$ as 
$$g^a(e_i,e_j)=(1-a \kappa_i)^2 \delta_{ij},$$ where $\kappa_i$ is the principal curvature of $\Sigma$ in the direction $e_i$. However, in our present situation, the condition \eqref{condicur} ensures that $\psi^a$ is everywhere regular. Moreover, it also follows from \eqref{condicur} and the expression of $g^a$ that $g^a(e_i,e_j)\geq a^2 \ep^2 \delta_{ij}$, and so $\psi^a$ is a complete surface.

In addition, the Gauss map of $\psi^a$ is equal to $N$ (thus $\psi^a$ is also a multigraph), and $\{e_1,e_2\}$ are also principal directions for $\psi^a$. The principal curvatures of $\psi^a$ are given then by 
 \begin{equation}\label{weipa}
 \kappa_i^a= \frac{\kappa_i}{1-a \kappa_i}, \hspace{1cm} i=1,2.
  \end{equation}
From this expression and \eqref{condicur}, it is clear that the $\kappa_i^a$ are uniformly bounded, i.e., that $\psi^a$ has bounded second fundamental form.

We check next that $\psi^a$ has quasiconformal Gauss map. Let $F_a(t):= \frac{t}{1-at}$. Note that $F_a'>0$ and that $(F_a)^{-1}=F_{-a}$. Also, by \eqref{weipa}, we have $\kappa_i^a= F_a (\kappa_i)$. In particular, $\kappa_1^a\geq \kappa_2^a$. Since the curvature diagram of $\Sigma$ lies in $\cR_{\varphi}$, we have $\varphi_1(\kappa_1)\leq \kappa_2 \leq \varphi_2(\kappa_1)$. Thus, since $F_a$ is increasing, we obtain
\begin{equation}\label{paracu}F_a \circ \varphi_1 \circ F_{-a} (\kappa_1^a) \leq \kappa_2^a \leq F_a \circ \varphi_2 \circ F_{-a} (\kappa_1^a).
\end{equation} 
The functions $F_a\circ \varphi_i \circ F_{-a}$ are strictly decreasing, with negative derivative at the origin. Hence, since both $\kappa_i^a$ are bounded, this implies that $(\kappa_1^a(\Sigma),\kappa_2^a(\Sigma))$ lies in a wedge region of $\R^2$ of the form \eqref{wedge}. Thus, $\psi^a$ has quasiconformal Gauss map.

To sum up, $\psi^a$ is a complete multigraph with quasiconformal Gauss map and bounded second fundamental form. By Theorem \ref{alfa}, it is a plane. Hence, $\Sigma$ must also be a plane.
\end{proof}

Theorem \ref{th:erre} has a direct important consequence for elliptic Weingarten surfaces. For this, recall that if we write an elliptic Weingarten equation as \eqref{wein2}, the notation $I_f$ indicates the domain of the function $f$, which is an interval of $\R$. Then, Theorem \ref{bernstein} below follows directly from Theorem \ref{th:erre}, and solves the Bernstein problem for elliptic Weingarten graphs (and more generally for multigraphs with $f(0)=0$) in the case that $I_f\neq \R$.

\begin{theorem}\label{bernstein}
Let $\Sigma$ be a complete multigraph in $\R^3$ that satisfies an elliptic Weingarten equation\eqref{wein2}, with $I_f\neq \R$ and $f(0)=0$. Then $\Sigma$ is a plane.
\end{theorem}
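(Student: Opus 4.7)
The plan is to exhibit the curvature diagram of $\Sigma$ as a subset of a region of the form $\cR_\varphi$ or $\cR_\varphi^*$ appearing in the statement of Theorem \ref{th:erre}, and then invoke that theorem directly.

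First, I would use the structural properties (i)--(iv) of $f$ from Section \ref{sec:21}. Since $I_f\neq \R$ and $f(0)=0$, the interval $I_f$ must be either $(a,\infty)$ with $a<0$ or $(-\infty,b)$ with $b>0$. The symmetry $f\circ f=\mathrm{Id}$ together with $f'<0$ implies that $f$ is a decreasing $C^2$ bijection of $I_f$ onto itself fixing the origin, and that $\lim_{x\to a}f(x)=+\infty$ (resp.\ $\lim_{x\to b}f(x)=-\infty$) in the first (resp.\ second) case. In particular, at every point of $\Sigma$ the principal curvatures $\kappa_1\geq \kappa_2$, linked by $\kappa_2=f(\kappa_1)$, satisfy $\kappa_1\geq 0\geq \kappa_2$: if instead $\kappa_1<0$, then $\kappa_2=f(\kappa_1)>f(0)=0>\kappa_1$, contradicting $\kappa_1\geq \kappa_2$. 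Hence the curvature diagram of $\Sigma$ lies on the branch $\{(x,f(x)):x\geq 0\}$.

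In the case $I_f=(a,\infty)$ with $a<0$, the restriction $f|_{[0,\infty)}$ is a strictly decreasing $C^1$ function, $f(0)=0$, $f'(0)<0$, and it is bounded below by $a$. Setting $\varphi_1=\varphi_2:=f|_{[0,\infty)}$, the curvature diagram is contained in $\cR_\varphi$ as defined in \eqref{errefi}. In the case $I_f=(-\infty,b)$ with $b>0$, the curve $\{(x,f(x)):x\in[0,b)\}$ is bounded in $x$ but unbounded below in $y$; here I would use $f=f^{-1}$ to reparametrize it as $\{(f(y),y):y\in(-\infty,0]\}$, and then define $\varphi_1(t)=\varphi_2(t):=-f(-t)$ for $t\geq 0$. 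One checks directly that $\varphi_i(0)=0$, $\varphi_i'(0)=f'(0)<0$, and $\varphi_i(t)\to -b$ as $t\to\infty$, so $\varphi_i$ is bounded below. Unwinding the definition of $\cR_\varphi^*$ shows exactly that the curvature diagram of $\Sigma$ is contained in $\cR_\varphi^*$.

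With this, Theorem \ref{th:erre} applies to the complete multigraph $\Sigma$ and gives that $\Sigma$ is a plane. The whole argument is essentially bookkeeping: the only non-trivial point is the use of the involution $f\circ f=\mathrm{Id}$ in the second case to exchange the roles of the two coordinates of the curvature diagram so that the unbounded direction is the vertical one, matching the definition of $\cR_\varphi^*$. There is no genuine analytic obstacle here; Theorem \ref{bernstein} is a direct corollary of Theorem \ref{th:erre}, the real content being carried by the quasiconformal parallel-surface construction used in the proof of that earlier result.
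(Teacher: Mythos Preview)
Your proof is correct and follows essentially the same route as the paper: both arguments verify that the curvature diagram of $\Sigma$ lies in a region $\cR_\varphi$ (when $I_f=(a,\infty)$) or $\cR_\varphi^*$ (when $I_f=(-\infty,b)$), and then invoke Theorem \ref{th:erre}. Your version is more explicit in constructing the functions $\varphi_i$ and in using the involution $f\circ f=\mathrm{Id}$ to reparametrize in the second case, whereas the paper simply asserts that the curvature diagram falls into one of the two region types; the underlying idea is identical.
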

\begin{proof}
First, note that the curvature diagram $(\kappa_1(\Sigma),\kappa_2(\Sigma))$, $\kappa_1\geq \kappa_2$, of $\Sigma$ is contained in the region of $\R^2$ of the form $\{(x,y) : x\geq y, y=f(x)\}$. Note that  $f'<0$ everywhere, with $f(0)=0$. Since $I_f\neq \R$, then $I_f=(a,\8)$ for $a<0$, or $I_f=(-\8,b)$ for $b>0$; see Section \ref{sec:21}. By the symmetry condition $f\circ f ={\rm Id}$, it follows then that $f(x)\to a$ when $x\to \8$ in the first case, and that $f(x)\to -\8$ when $x\to b^-$ in the second case. Thus, the curvature diagram of $\Sigma$ lies in a region of the form $\cR_{\varphi}$ in the first case, and in one of the form $\cR_{\varphi}^*$ in the second one. By Theorem \ref{th:erre}, we conclude then that $\Sigma$ must be a plane. \end{proof}

For the sake of completeness, we reformulate Theorem \ref{bernstein} for the situation in which the Weingarten equation is written as \eqref{weq2}, instead of  \eqref{wein2}: 

\begin{theorem}\label{bernstein2}
Let $g\in C^{2}([0,\8))$ satisfy $g(0)=0$ and:
\begin{enumerate}
\item
$4t (g'(t))^2<1$ for all $t$ (ellipticity condition).
 \item
Either $t+g(t^2)$ or $t-g(t^2)$ is bounded in $[0,\8)$.
\end{enumerate}
Then, any complete multigraph (in particular, any entire graph) in $\R^3$ that satisfies the Weingarten equation $H=g(H^2-K)$ is a plane.
\end{theorem}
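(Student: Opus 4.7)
The plan is to reduce Theorem \ref{bernstein2} directly to Theorem \ref{bernstein}, by constructing the function $f$ of the equivalent formulation \eqref{wein2} associated to the given $g$, and then verifying that condition (2) in the statement is precisely the condition $I_f\neq\mathbb{R}$.

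First I would introduce $\phi:\mathbb{R}\to\mathbb{R}$ defined by $\phi(t):=t+g(t^2)$. Since $\phi'(t)=1+2t\,g'(t^2)$ and the ellipticity hypothesis (1) is exactly $|2t\,g'(t^2)|<1$ on $\mathbb{R}$, $\phi$ is a strictly increasing $C^2$-diffeomorphism of $\mathbb{R}$ onto an open interval $I=(L_-,L_+)$ with $0=\phi(0)\in I$. I would then define
\[
f(x) := g\bigl(\phi^{-1}(x)^2\bigr)-\phi^{-1}(x),\qquad x\in I,
\]
and check that $f\in C^2(I)$, $f(0)=0$, $f'<0$, and $f\circ f=\mathrm{Id}$; the last identity is immediate from the relation $f(\phi(t))=\phi(-t)$. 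These are exactly the properties (i)--(iv) of Section \ref{sec:21}, so $f$ defines an elliptic Weingarten relation \eqref{wein2} with domain $I_f=I$. Parametrizing the curvature diagram by $(\kappa_1,\kappa_2)=(\phi(t),\phi(-t))$, $t\in\mathbb{R}$, one sees directly that, for a surface with principal curvatures $\kappa_1\geq \kappa_2$, the equations $H=g(H^2-K)$ and $\kappa_2=f(\kappa_1)$ are equivalent.

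Second, I would translate condition (2). Since $\phi$ is strictly increasing with $\phi(0)=0$, the limit $L_+=\lim_{t\to+\infty}\phi(t)$ is finite if and only if $t+g(t^2)$ is bounded on $[0,\infty)$; similarly, using $\phi(-s)=-(s-g(s^2))$, the limit $L_-=\lim_{t\to-\infty}\phi(t)$ is finite if and only if $t-g(t^2)$ is bounded on $[0,\infty)$. Hence condition (2) of Theorem \ref{bernstein2} is equivalent to $I_f\neq\mathbb{R}$.

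Combining the two previous steps, any complete multigraph $\Sigma\subset\mathbb{R}^3$ satisfying $H=g(H^2-K)$ is a complete multigraph satisfying the elliptic Weingarten equation $\kappa_2=f(\kappa_1)$ of \eqref{wein2}, with $f(0)=0$ and $I_f\neq \mathbb{R}$. Theorem \ref{bernstein} then yields at once that $\Sigma$ is a plane. The only delicate point in the whole plan is the verification that $f$ inherits the regularity and symmetry required by \eqref{wein2}, which is a direct consequence of the strict ellipticity of $g$; everything else is an unpacking of definitions, so I do not expect a serious obstacle beyond careful bookkeeping.
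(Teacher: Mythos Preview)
Your proposal is correct and follows essentially the same approach as the paper: both reduce to Theorem \ref{bernstein} by showing that condition (2) on $g$ is equivalent to $I_f\neq\mathbb{R}$ for the associated $f$ in \eqref{wein2}. The only difference is cosmetic---the paper parametrizes the graph of $f$ by $t=H^2-K\geq 0$ via $(g(t)\pm\sqrt t,\,g(t)\mp\sqrt t)$, while you package both halves at once through the diffeomorphism $\phi(s)=s+g(s^2)$ on $\mathbb{R}$; your observation that $t+g(t^2)$ and $t-g(t^2)$ cannot both be bounded (their sum is $2t$) recovers properties (iii)--(iv) and confirms the equivalence.
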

\begin{proof}
By Theorem \ref{bernstein}, we only need to check that the second condition on $g$ in the statement is equivalent to the fact that $I_f\neq \R$, for the function $f$ appearing when we rewrite \eqref{weq2} as \eqref{wein2}. Denote $t:=H^2-K$, and note that $\{\kappa_1,\kappa_2\}=g(t) \pm \sqrt{t}$, because of \eqref{weq2}. Therefore, the graph of $f$, given by $\{(x,f(x)) : x\in I_f\}$, is equal to the union 
\begin{equation}\label{eqra} \left\{\left(g(t)-\sqrt{t},g(t)+\sqrt{t}\right) : t\geq 0\right\} \cup \left\{\left(g(t)+\sqrt{t},g(t)-\sqrt{t}\right) : t\geq 0\right\},
 \end{equation}
due to the symmetry condition $f\circ f={\rm Id}$. 
From the ellipticity condition (1), we see that $g(t)+\sqrt{t}$ is strictly increasing (thus, bounded from below), and $g(t)-\sqrt{t}$ is strictly decreasing (thus, bounded from above). Therefore, from \eqref{eqra}, $I_f$ is bounded from below if and only if $g(t)-\sqrt{t}$ is bounded in $[0,\8)$, and $I_f$ is bounded from above if and only if $g(t)+\sqrt{t}$ is bounded in $[0,\8)$. This gives the equivalence of $I_f\neq \R$ with the second condition above.\end{proof}

Conditions (1)-(2) in Theorem \ref{bernstein2} have also appeared in previous works by Sa Earp and Toubiana \cite{ST,ST2} in connection with the existence of \emph{catenoids} and half-space theorems for elliptic Weingarten surfaces of minimal type. See also \cite{EM}.

An \emph{elliptic linear Weingarten surface} is one that satisfies the equation 
 \begin{equation}\label{lw}
2\alfa H + \beta K =\delta, \hspace{1cm}\alfa,\beta,\delta \in \R,
 \end{equation} 
where the ellipticity condition is $\alfa^2+\beta \delta>0$. This family contains surfaces of constant mean curvature ($\beta=0$) and of constant positive curvature ($\alfa=0$), and corresponds to the family of parallel surfaces of the class of CMC surfaces in $\R^3$. However, as the parallel surface procedure usually creates singularities, their global geometry is not equivalent to the class of CMC surfaces.

In terms of $\kappa_1,\kappa_2$, equation \eqref{lw} is written as $\kappa_2=f(\kappa_1)$, where 
 \begin{equation}\label{lew}
f(x)= \frac{\delta-\alfa x}{\alfa +\beta x}.
\end{equation} Note that $I_f\neq \R$ for $f$ as in \eqref{lew} unless $\beta=0$. With this, we have:

\begin{corollary}\label{lwm}
Planes and cylinders are the only complete, elliptic linear Weingarten surfaces in $\R^3$ whose Gauss map image lies in a closed hemisphere of $\S^2$.
\end{corollary}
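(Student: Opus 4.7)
The plan is to apply Lemma A (Lemma \ref{numulti}) to reduce to the multigraph case, and then to split into three subcases according to the coefficients $\alpha,\beta,\delta$ in \eqref{lw}. By Lemma A, either $\Sigma$ is a multigraph, or it is a complete plane or cylinder; the latter case yields the corollary at once. So suppose $\Sigma$ is a multigraph; we will show it must be a plane.

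If $\beta=0$, then \eqref{lw} reduces to the CMC equation $2\alpha H=\delta$, which is uniformly elliptic, and Theorem \ref{unifeth} forces $\Sigma$ to be a plane. If $\beta\neq 0$ and $\delta=0$, the associated function $f(x)=-\alpha x/(\alpha+\beta x)$ from \eqref{lew} satisfies $f(0)=0$ and has a pole at $x=-\alpha/\beta$, so $I_f\neq\R$; Theorem \ref{bernstein} then gives that $\Sigma$ is a plane.

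The remaining and main case is $\beta\neq 0$, $\delta\neq 0$, where we will derive a contradiction via a parallel surface argument. After a change of orientation and possibly multiplying \eqref{lw} by $-1$, we may assume $\beta>0$ and that the umbilical constant $\alpha_0$ of the branch containing $\Sigma$ satisfies $\alpha_0>0$; the principal curvatures $\kappa_1\geq\kappa_2$ of $\Sigma$ then lie on the branch of \eqref{lew} with $\kappa_1\geq \alpha_0$, $\kappa_2\in(c,\alpha_0]$ and $\kappa_2\to c$ as $\kappa_1\to\8$, where $c:=-\alpha/\beta<\alpha_0$ is the horizontal asymptote (the strict inequality $\kappa_2>c$ follows from the ellipticity $\alpha^2+\beta\delta>0$). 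Pick $t_0<\min\{c,0\}$, set $a=1/t_0$, and consider $\psi^a=\psi+aN$. Since $\kappa_i>c>t_0$ and $t_0<0$, each factor $1-a\kappa_i=(\kappa_i-t_0)/|t_0|$ is positive and uniformly bounded away from zero. Hence $\psi^a$ is a regular immersion whose induced metric is uniformly comparable to that of $\Sigma$ (so $\psi^a$ is complete), its Gauss map coincides with $N$ (so $\psi^a$ is a multigraph), and using $\kappa_i^a=\kappa_i/(1-a\kappa_i)$ one checks that both $\kappa_i^a$ are bounded ($\kappa_1^a\to -t_0$ as $\kappa_1\to\8$, and $\kappa_2^a$ is bounded because $\kappa_2$ lies in a bounded interval). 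Moreover, $\psi^a$ satisfies the transformed linear Weingarten equation $2(\alpha-\delta a)H^a+(\beta+2\alpha a-\delta a^2)K^a=\delta$, with ellipticity constant $(\alpha-\delta a)^2+(\beta+2\alpha a-\delta a^2)\delta=\alpha^2+\beta\delta>0$ preserved. Theorem \ref{b2} then forces $\psi^a$ to be a plane; but a plane satisfies this equation only when $\delta=0$, contradicting $\delta\neq 0$.

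The main obstacle is this last case: a priori the curvature diagram of $\Sigma$ is an unbounded hyperbola branch and $|\sigma|$ is not controlled, so Theorem \ref{b2} cannot be applied to $\Sigma$ directly. The decisive observation is that the horizontal asymptote of the branch bounds one principal curvature from below, and parallel translation by $a=1/t_0$ with $t_0$ strictly below this asymptote converts this into a uniform upper bound on $\kappa_1^a$, providing the curvature control needed to invoke Theorem \ref{b2} while simultaneously preserving ellipticity and the multigraph property.
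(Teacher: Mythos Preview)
Your proof is correct and follows essentially the same route as the paper: reduce to multigraphs via Lemma~\ref{numulti}, handle $\beta=0$ by uniform ellipticity, and for $\beta\neq 0$ pass to a parallel surface at distance $1/t_0$ with $t_0\notin\overline{I_f}$ to get a complete elliptic linear Weingarten multigraph with bounded second fundamental form, then invoke Theorem~\ref{b2} (the paper does this uniformly in $\delta$ rather than splitting off the case $\delta=0$ via Theorem~\ref{bernstein}, but the core argument is identical). One small imprecision: the metrics of $\Sigma$ and $\psi^a$ are not \emph{uniformly comparable}, since $\kappa_1$ and hence $1-a\kappa_1$ are unbounded; however, only the lower bound $g^a\geq C\,g$ is needed for completeness of $\psi^a$, and that does hold.
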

\begin{proof}
If $\beta=0$, this is the classical theorem of Hoffman, Osserman and Schoen for CMC surfaces, see \cite{HOS}; note that it also follows from Theorem \ref{unifeth} and Lemma \ref{numulti}. If $\beta\neq 0$, then $I_f\neq \R$ and so we can consider $t_0\neq 0$ not in $\overline{I_f}$. Assume that the surface is a multigraph. Then, its parallel surface at a distance $1/t_0$ is a complete multigraph with bounded second fundamental form (see the proof of Theorem \ref{th:erre}) and also an elliptic linear Weingarten surface, by an elementary computation using \eqref{weipa} and \eqref{lew}. Thus, the result follows from Theorem \ref{b2}. Finally, by Lemma \ref{numulti}, we see that if the surface is not a multigraph, it must be a cylinder (note that planes are multigraphs). This completes the proof.
\end{proof}

\def\refname{References}

\vskip 0.2cm

\noindent Isabel Fernández

\noindent Departamento de Matemática Aplicada I,\\ Instituto de Matemáticas IMUS \\ Universidad de Sevilla (Spain).

\noindent  e-mail: {\tt isafer@us.es}

\vskip 0.2cm

\noindent José A. Gálvez

\noindent Departamento de Geometría y Topología,\\ Universidad de Granada (Spain).

\noindent  e-mail: {\tt jagalvez@ugr.es}

\vskip 0.2cm

\noindent Pablo Mira

\noindent Departamento de Matemática Aplicada y Estadística,\\ Universidad Politécnica de Cartagena (Spain).

\noindent  e-mail: {\tt pablo.mira@upct.es}

\vskip 0.4cm

\noindent Research partially supported by MINECO/FEDER Grant no. MTM2016-80313-P

\end{document}